%
%
%
%
%

\documentclass[11pt]{article}
\usepackage{amssymb}
\usepackage{amsbsy}
\usepackage[latin1]{inputenc}
\usepackage{amsthm}
\usepackage[dvips]{graphicx}
\usepackage{graphicx} 
\usepackage{subfigure}
\usepackage{pst-eucl}
\usepackage[latin1]{inputenc}
\usepackage[english]{babel}
\usepackage{amsmath,amssymb,graphics,mathrsfs}
\usepackage{amsmath,amssymb,latexsym}
\usepackage{graphicx,color}
\usepackage[T1]{fontenc}
\usepackage[active]{srcltx}
\usepackage{multicol}
\usepackage[latin1]{inputenc}
\usepackage{pst-all}
\usepackage{enumerate}
\usepackage{pstricks}
\usepackage{pstricks-add}
\usepackage{setspace}
\usepackage{soul}
\usepackage{cancel}
\usepackage{nonfloat}
\usepackage[margin=10pt,font=footnotesize,labelfont=bf,labelsep=endash]{caption}
\usepackage[left=4cm,top=3cm,right=2.4cm,bottom=3.2cm]{geometry}

\usepackage[colorlinks=true,citecolor=red,linkcolor=blue,urlcolor=RubineRed,pdfpagetransition=Blinds,pdftoolbar=false,pdfmenubar=false]{hyperref}


\newtheorem{definition}{Definition}
\newtheorem{theorem}{Theorem}
\newtheorem{corol}{Corollary}
\newtheorem{lemma}{Lemma}
\newtheorem{remark}{Remark}

\hoffset -0.5cm
\oddsidemargin 0.8cm                         
\topmargin -.5cm 
\textheight 21.8cm 
\textwidth 16.5cm

\begin{document}
\title{A  regularity criterion for a 3D chemo-repulsion system and its application to  a bilinear optimal  control problem}
\author{F. Guill\'en-Gonz\'alez$^1$, E. Mallea-Zepeda$^2$, M.A. Rodr\'iguez-Bellido$^3$}
\date{\small$^{1,3}$\it Dpto. Ecuaciones Diferenciales y An\'alisis Num\'erico and IMUS\, Universidad de Sevilla,\\
Facultad de Matem\'aticas, C/ Tarfia, S/N, 41012, Spain\\
\small$^2$\it Departamento de Matem\'atica, Universidad de Tarapac\'a, Arica, Chile}
\maketitle
\footnotetext{$^1$ E-mail: {\tt guillen@us.es}}
\footnotetext{$^2$E-mail: {\tt emallea@uta.cl}}
\footnotetext{$^3$E-mail: {\tt angeles@us.es}}
\date{}

\begin{abstract}
In this paper we study a bilinear optimal control problem associated to a 3D chemo-repulsion model with linear production.
We prove the existence of weak solutions and we establish a regularity  criterion to get global in time strong solutions.
As a consequence, we deduce the existence of a global optimal solution with bilinear control and, using a Lagrange multipliers theorem, we derive first-order optimality conditions for local optimal solutions.

\noindent\textbf{Keywords:} Chemo-repulsion and production model, weak solutions, strong solutions, bilinear optimal control, optimality conditions.

\noindent\textbf{2010 Mathematics Subject Classification:} 35K51, 35Q92, 49J20, 49K20.

\end{abstract}

\section{Introduction}
\label{intro}

The chemotaxis phenomenon is understood as the directed movement of live organisms in response to chemical gradients. Keller and Segel
\cite{keller-segel} proposed a mathematical model that describes chemotactic aggregation of cellular slime molds which move preferentially towards
relatively high concentrations of a chemical substance secreted by the amoebae themselves, which is called {\it chemo-attraction} with production. When the regions
of high chemical concentration generate a repulsive effect on the organisms, the phenomenon is called {\it chemo-repulsion}.

In this work we study an optimal  control problem subject to a chemo-repulsion  with linear production system in which a bilinear control acts  injecting or extracting 
chemical substance on a subdomain of control $\Omega_c\subset\Omega$. Specifically, we consider $\Omega\subset\mathbb{R}^3$ be a simply connected
bounded domain  with boundary $\partial\Omega$ of class $C^2$  and $(0,T)$ a time interval, with $0<T<+\infty$. Then we study a control problem related to
the following system in the  time-space domain $Q:=(0,T)\times\Omega$,
\begin{equation}\label{eq1}
\left\{\begin{array}{rcl}
\partial_tu-\Delta u&=&\nabla\cdot(u\nabla v),\\
\partial_tv-\Delta v+v&=&u+f\,v\,\chi_{_{\Omega_c}},\\
\end{array}
\right.
\end{equation}
with initial conditions 
\begin{equation}\label{eq2}
u(0,\cdot)=u_0\ge0,\ v(0,\cdot)=v_0\ge0\ \mbox{ in }\Omega,
\end{equation}
and non-flux boundary conditions
\begin{equation}\label{eq3}
\dfrac{\partial u}{\partial{\bf n}}=0,\quad \dfrac{\partial v}{\partial{\bf n}}=0\ \mbox{ on }(0,T)\times\partial\Omega,
\end{equation}
where ${\bf n}$ denotes the outward unit normal vector to $\partial\Omega$. In (\ref{eq1}), the unknowns are the cell density $u(t,x)\ge 0$ and chemical concentration $v(t,x)\ge 0$. The function $f=f(t,x)$ 
denotes a bilinear control  acting in the chemical equation. 
We observe that in the subdomains of $\Omega$ where
$f\ge0$ the chemical substance is injected, and conversely where $f\le0$ the chemical substance is extracted. 

System (\ref{eq1})-(\ref{eq3}) without control (i.e.~$f\equiv0$) has been studied in \cite{cieslak}, \cite{tao}. In \cite{cieslak}, the authors proved the global existence and uniqueness of smooth classical
solutions in 2D domains, and global existence of weak solutions in  dimension 3 and 4. In \cite{tao}, on a bounded convex domain $\Omega\subset\mathbb{R}^n$ ($n\ge 3$),  
it is proved that a modified system of (\ref{eq1})-(\ref{eq3}), changing the chemotactic term $\nabla\cdot(u\nabla v)$ by $\nabla\cdot(g(u)\nabla v)$ with an adequate density-dependent chemotactic function $g(u)$, has a unique global in time classical solution. This result is not applicable in our case, because $g(u)=u$ does not satisfies  the hypothesis imposed in \cite{tao}.

There is an extensive literature devoted to the study of control problems with PDEs, see for instance 
\cite{alekseev,casas-1,casas-2,karl-wachsmuth,kien-rosch,kunisch-trautmann,mallea-ortega-villamizar-1,mallea-ortega-villamizar-2,tachim,zhen}
and references therein. In all previous works, the control is of distributed or  boundary type. As far as know, the literature related to optimal control problems with PDEs and bilinear
control is scarce, see \cite{borzi,fister-mccarthy,guillen-mallea-rodriguez,kroner,vallejos}.

In the context of optimal control problems associated to chemotaxis models, the literature is also scarce. 
In \cite{fister-mccarthy,ryu} a 1D problem is studied. In \cite{fister-mccarthy} the authors analyzed 
two problems for a chemoattractant model.
The bilinear control  acts on the whole $\Omega$ in the cells equation. The existence of optimal control is proved and an optimality system is derived. 
Also, a numerical scheme for the optimality system is designed and some numerical simulations are presented. 
In \cite{ryu}  a boundary control problem for a chemotaxis reaction-diffusion system is studied. The control acts on the boundary  for the chemical substance, and the existence
of optimal solution is proved. A distributed optimal control problem for a two-dimensional model of cancer invasion has been studied in \cite{dearaujo}, 
proving the existence of optimal solution and deriving an optimality system.
Rodr\'iguez-Bellido et al.~\cite{rodriguez-rueda-villamizar} study a distributive optimal control problem related to a $3D$ stationary chemotaxis
model coupled with the Navier-Stokes equations ({\it chemotaxis-fluid system}). The authors prove the existence of an optimal solution and derive an
optimality system using a penalty method, taking into account that the relation control-state is multivalued. Ryu and Yagi \cite{ryu-yagi} study an extreme problem
for a chemoattractant $2D$ model, in which the control variable is distributed in the  chemical equation. They prove the existence of
optimal solutions, and  derive an optimality system, using the fact that the state is differentiable with respect to the control. 
Other studies related to controllability for the nonstationary Keller-Segel model and nonstationary chemotaxis-fluid system can be consulted in \cite{chaves-guerrero-1} and \cite{chaves-guerrero-2}, respectively.

In \cite{guillen-mallea-rodriguez},  an optimal bilinear control problem related to strong solutions of system (\ref{eq1})-(\ref{eq3}) in 2D domains was studied, 
proving  the existence and uniqueness of global strong solutions, and the existence of  global optimal control. Moreover, using a Lagrange multiplier theorem, first-order optimality conditions are derived.
Now, this paper can be seen as a 3D version of  \cite{guillen-mallea-rodriguez}.  In fact, similarly to \cite{guillen-mallea-rodriguez}, the main objective now 
is to prove the existence of global optimal solutions and to derive  optimality conditions, which 
will be  more complicated because the PDE system is considered in 3D domains. In this case, we distinguish two different types of solutions: {\it weak} and {\it strong}. The existence of weak solutions can be obtained under minimal assumptions (see Theorem \ref{weak_solution}).
However, such result is not sufficient to carry out the study of the control problem, due to the lack of regularity of  weak solutions.  In order to overcome this problem, 
we introduce a regularity criterion that allows to obtain a (unique)  strong solution of (\ref{eq1})-(\ref{eq3}) (see Theorem \ref{regularity}).
As far as we know, there are no results of global in time regularity of weak solutions of  system (\ref{eq1})-(\ref{eq3}) in $3D$ domains. 
This is similar to what happens with the Navier-Stokes equations (see \cite{temam}).

In this work, we deal with strong solutions of (\ref{eq1})-(\ref{eq3}) which allows us to analyze  the control problem.
However, we are going to  prove the existence of an optimal control  associated to strong solutions, assuming the existence of controls such that the associated strong solution exists.  
Following the ideas of \cite{casas-1,casas-2}, we  consider a regularity criterion in the objective functional such that any weak solution of
(\ref{eq1})-(\ref{eq3}) with this regularity is also a strong solution.

The paper is organized  as follow: In Section \ref{sec:2}, we fix the notation, introduce the functional spaces to be used
and we state a regularity result for linear parabolic-Neumann problems that will be used throughout this work. In Section \ref{sec:3} we give the definition of weak solutions of  (\ref{eq1})-(\ref{eq3}) and, by introducing a family of regularized
problems related to (\ref{eq1})-(\ref{eq3}) (its existence is deduced in the Appendix) and passing to the limit, prove the existence of weak solutions of system (\ref{eq1})-(\ref{eq3}). In Section \ref{sec:4} we give the definition of strong solutions of  (\ref{eq1})-(\ref{eq3}), and we establish a regularity  criterion under which  weak solutions of  (\ref{eq1})-(\ref{eq3}) are also strong solutions.
Section \ref{sec:5} is dedicated to the study of a bilinear control problem related to strong solutions of system (\ref{eq1})-(\ref{eq3}), proving the existence of an optimal solution and deriving
the first-order optimality conditions based on a Lagrange multipliers argument in Banach spaces. Finally, we obtain  a regularity result for these Lagrange multipliers.

\section{Preliminaries}
\label{sec:2}

We will introduce some notations. We will use the Lebesgue space
$L^p(\Omega)$, $1\le p\le+\infty$, with norm denoted by $\|\cdot\|_{L^p}$. In particular, the $L^2$-norm and
its inner product will denoted by $\|\cdot\|$ and $(\cdot,\cdot)$, respectively. We consider the usual Sobolev spaces
$W^{m,p}(\Omega)=\{u\in L^p(\Omega)\,:\, \|\partial^\alpha u\|_{L^p}<+\infty,\ \forall |\alpha|\le m\}$, 
with norm denoted by $\|\cdot\|_{W^{m,q}}$. When $p=2$, we write $H^m(\Omega):=W^{m,2}(\Omega)$ and we denote the respective norm by
$\|\cdot\|_{H^m}$. Also, we use the space $W_{\bf n}^{m,p}(\Omega)=\{u\in W^{m,p}(\Omega)\,:\, \frac{\partial u}{\partial{\bf n}}=0\mbox{ on }\partial\Omega\}$  ($m\ge2$) and its norm denoted
by $\|\cdot\|_{W^{m,p}_{\bf n}}$. 
If $X$ is a Banach space, we denote by $L^p(X)$ the space of valued functions in $X$ defined on the interval $[0,T]$ that are integrable in the Bochner sense, and its norm will be
denoted by  $\|\cdot\|_{L^p(X)}$. For  simplicity we denote $L^p(Q):=L^p(0,T;L^p)$ and its norm by $\|\cdot\|_{L^p(Q)}$. 
We also denote by $C([0,T];X)$ the space of continuous functions from $[0,T]$ into a Banach space $X$, whose norm is given  by $\|\cdot\|_{C(X)}$.
The  topological dual space of a Banach space $X$ will be denoted by $X'$, and the duality for a pair $X$ and $X'$ by $\langle\cdot,\cdot\rangle_{X'}$ or simply by $\langle\cdot,\cdot\rangle$
unless this leads to ambiguity.
Moreover, the letters $C$, $K$, $C_0$, $K_0$, $C_1$, $K_1$,..., denote positive constants, independent of state $(u,v)$ and control $f$, but its value may change from line to line.

In order to study the existence of solution of system (\ref{eq1})-(\ref{eq3}), we define the space 
\begin{equation*}\label{besov-4}
\widehat{W}^{2-2/p,p}(\Omega):=
\left\{\begin{array}{rcl}
W^{2-2/p,p}(\Omega)&\mbox{if}& p<3,\\
W^{2-2/p,p}_{\bf n}(\Omega)&\mbox{if}& p>3,
\end{array}
\right.
\end{equation*}
and we will often use the following regularity result for the heat equation
(see \cite[p. 344]{feireisl}).

\begin{lemma}\label{feireisl}
Let $1<p<+\infty$, $u_0\in\widehat{W}^{2-2/p,p}(\Omega)$ and  $g\in L^p(Q)$. Then the problem
\begin{equation*}
\left\{
\begin{array}{rcl}
\partial_tu-\Delta u&=&g \quad\mbox{ in }Q,\\
u(0,\cdot)&=&u_0\quad\mbox{ in }\Omega,\\
\dfrac{\partial u}{\partial{\bf n}}&=&0\quad\mbox{ on }(0,T)\times\partial\Omega,
\end{array}
\right.
\end{equation*}
admits a unique solution $u$ such that
\begin{equation*}
u\in  C([0,T];\widehat{W}^{2-2/p,p})\cap L^p(W^{2,p}),
\quad \partial_tu\in L^p(Q).
\end{equation*}
Moreover, there exists a positive constant $C:=C(p,\Omega,T)$ such that
\begin{equation*}\label{des-regularity}
\|u\|_{C(\widehat{W}^{2-2/p,p})}+\|\partial_tu\|_{L^p(Q)}+\|u\|_{L^p(W^{2,p})}
\le C(\|g\|_{L^p(Q)}+\|u_0\|_{\widehat{W}^{2-2/p,p}}).
\end{equation*}
\end{lemma}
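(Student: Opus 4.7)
The plan is to split the problem by linearity into two subproblems and invoke the classical $L^p$ maximal regularity theory for the Neumann Laplacian. Write $u=u_1+u_2$, where $u_1$ solves the homogeneous equation $\partial_t u_1-\Delta u_1=0$ with initial datum $u_0$, and $u_2$ solves the inhomogeneous equation with source $g$ and zero initial datum; both satisfy the Neumann boundary condition. Treating the two pieces separately is convenient because the obstructions in each case are different: for $u_1$ one must track continuity into the trace space, while for $u_2$ one needs the maximal parabolic estimate.

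For $u_2$, I would rely on the fact that the Neumann Laplacian $\Delta_{\bf n}$ on $L^p(\Omega)$ with $C^2$ boundary generates a bounded analytic semigroup. By the abstract $L^p$-maximal regularity theorem (Dore--Venni / Weis, using that $\Delta_{\bf n}$ has bounded imaginary powers on the UMD space $L^p$ for $1<p<\infty$), the mild solution $u_2(t)=\int_0^t e^{(t-s)\Delta_{\bf n}}g(s)\,ds$ satisfies $\partial_t u_2,\,\Delta u_2\in L^p(Q)$ with the estimate $\|\partial_t u_2\|_{L^p(Q)}+\|\Delta u_2\|_{L^p(Q)}\le C\|g\|_{L^p(Q)}$. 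Then $W^{2,p}$-elliptic regularity for the Neumann problem upgrades the $L^p$-bound on $\Delta u_2$ to a bound on all second derivatives, yielding the desired estimate $\|u_2\|_{L^p(W^{2,p})}\le C\|g\|_{L^p(Q)}$.

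For $u_1$, I would use the analyticity of the semigroup together with the identification of the real interpolation space $(L^p(\Omega),D(\Delta_{\bf n}))_{1-1/p,p}=\widehat{W}^{2-2/p,p}(\Omega)$. This identification is precisely where the case distinction appears: when $p>3$, functions in $W^{2-2/p,p}$ admit a well-defined normal trace of the first derivative, so the interpolation space incorporates the Neumann condition ($W^{2-2/p,p}_{\bf n}$); when $p<3$ the trace has no meaning and no boundary condition is imposed. Standard semigroup trace theory (see Lunardi or Amann) then gives $u_1\in C([0,T];\widehat{W}^{2-2/p,p})\cap L^p(W^{2,p})$ with the corresponding norm bound in terms of $\|u_0\|_{\widehat{W}^{2-2/p,p}}$. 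The analogous continuity statement for $u_2$ follows from the embedding $W^{1,p}(0,T;L^p)\cap L^p(0,T;W^{2,p}_{\bf n})\hookrightarrow C([0,T];(L^p,W^{2,p}_{\bf n})_{1-1/p,p})$.

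The main obstacle is the careful handling of the trace/interpolation characterization across the critical exponent $p=3$, which is exactly where the definition of $\widehat{W}^{2-2/p,p}$ changes. Away from this value everything is standard, but the identification of the real interpolation space and its compatibility with the Neumann boundary condition requires trace theorems in fractional Sobolev spaces on $C^2$ domains; the value $p=3$ is excluded in the hypothesis for this reason. Uniqueness is immediate from the linear estimate applied to the difference of two solutions with zero data, and the final combined estimate is obtained by adding the bounds for $u_1$ and $u_2$.
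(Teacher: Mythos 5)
Your proposal is correct in substance, but note that the paper does not prove this lemma at all: it is quoted as a known maximal-regularity result, with a citation to Feireisl--Novotn\'y \cite[p.~344]{feireisl}. What you have written is essentially a reconstruction of the standard proof behind that citation: split $u=u_1+u_2$ by linearity, obtain $\partial_t u_2,\Delta u_2\in L^p(Q)$ for the zero-initial-datum part from abstract maximal $L^p$-regularity of the Neumann Laplacian (Dore--Venni/Weis, analyticity of the semigroup on the UMD space $L^p$), upgrade to full $W^{2,p}$ bounds by elliptic regularity for the Neumann problem, and handle $u_1$ via the identification $(L^p,D(\Delta_{\bf n}))_{1-1/p,p}=\widehat{W}^{2-2/p,p}$, which correctly explains the case split at $p=3$ (the Neumann trace of $\nabla u$ is meaningful exactly when $2-2/p>1+1/p$) and why that borderline exponent is excluded from the definition of $\widehat{W}^{2-2/p,p}$. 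The continuity in time follows, as you say, from the trace embedding of the maximal regularity class. Two small points you should tighten if you wanted a self-contained argument: for the elliptic step you also need an $L^p(Q)$ bound on $u_2$ itself (immediate from the mild-solution formula, with the constant depending on $T$), and uniqueness is not literally ``the estimate applied to the difference'', since a priori the estimate is proved for the constructed solution; instead, uniqueness of solutions in $W^{1,p}(0,T;L^p)\cap L^p(0,T;W^{2,p}_{\bf n})$ with zero data follows by a duality argument or by testing with $|u|^{p-2}u$. The trade-off between the two routes is the usual one: the paper's citation keeps the exposition short, while your argument makes the dependence on classical semigroup and interpolation machinery explicit and shows exactly where the hypothesis $p\neq 3$ enters.
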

For simplicity, in what follows we will use the following notation
\begin{equation*}\label{space_X}
X_p:=\{u\in C([0,T];\widehat{W}^{2-2/p,p})\cap L^p(W^{2,p}) :\, \partial_tu\in L^p(Q)\},
\end{equation*}
and its norm will be denoted by $\|\cdot\|_{X_{p}}$.  In fact, $u\in X_p$ iff 
$u\in W^{2,1}_p(\Omega):=\{u\in L^p(W^{2,p})\,:\, \partial_tu\in L^p(Q)\}$ and $u\in C([0,T];\widehat{W}^{2-2/p,p})$.

Throughtout this paper, we will use the following equivalent norms in $H^1(\Omega)$ and $H^2(\Omega)$,   respectively (see \cite{necas} for details):
\begin{eqnarray}
\|u\|^2_{H^1}&\simeq&\|\nabla u\|^2+\left(\int_\Omega u\right)^2,\quad\forall\, u\in H^1(\Omega),\label{equi}\\
\|u\|^2_{H^2}&\simeq& \Vert \Delta u \Vert^2+ \left(\displaystyle\int_{\Omega} u \right)^2, \quad\forall \, u\in H^2_{\bf n}(\Omega),\label{norma-1}
\end{eqnarray}
and the classical interpolation  inequality in $3D$ domains
\begin{equation}\label{interpol}
 \|u\|_{L^4}\le C\|u\|^{1/4}\|u\|^{3/4}_{H^1},\quad \forall\, u\in H^1(\Omega).
\end{equation} 

\begin{remark}
The problem (\ref{eq1})-(\ref{eq3}) is conservative in $u$, because the total mass $\int_\Omega u(t)$ remains constant in time. In fact, integrating  (\ref{eq1})$_1$  in $\Omega$ we have
\begin{equation*}\label{comp_u}
\frac{d}{dt}\left(\int_\Omega u\right)=0,\ \mbox{ i.e. }\ \int_\Omega u(t)=\int_\Omega u_0:=m_0,\quad \forall t>0. 
\end{equation*}
Also, integrating (\ref{eq1})$_2$ in $\Omega$ we deduce that $\int_\Omega v$ satisfies
$$
\frac{d}{dt}\left(\int_\Omega v\right)+\int_\Omega v= m_0+\int_\Omega f \,v\,\chi_{_{\Omega_c}} ,\quad \forall t>0.
$$
\end{remark}

\section{Existence of Weak Solutions of Problem (\ref{eq1})-(\ref{eq3})}
\label{sec:3}

\begin{definition}(Weak solution)\label{weak}
Let $f\in L^4(Q_c):=L^4(0,T;L^4(\Omega_c))$, $u_0\in L^2(\Omega)$, $v_0\in H^1(\Omega)$ with $u_0\ge 0$ and $v_0\ge 0$ in $\Omega$,  a pair $(u,v)$
is called weak solution of problem (\ref{eq1})-(\ref{eq3}) in $(0,T)$, if $u\ge0$, $v\ge 0$,
\begin{eqnarray}
&&u\in L^{5/3}(Q)\cap L^{5/4}(W^{1,5/4}),\  \partial_tu\in [L^{10}(W^{1,10})]',\label{st-1}\\
&&v\in L^\infty(H^1)\cap L^2(H^2),\ \partial_tv\in L^{5/3}(Q),\label{st-11}
\end{eqnarray}
the following variational formulation holds for the $u$-equation
\begin{equation}\label{st-2}
-\displaystyle\int_0^T\langle u,\partial_t\overline{u}\rangle+\int_0^T(\nabla u,\nabla \overline{u})+\int_0^T(u\nabla v,\nabla \overline{u})=(u_0,\overline{u}(0)),\ \forall\overline{u}\in \mathcal{X}_u,
\end{equation}
the $v$-equation (\ref{eq1})$_2$ holds pointwisely a.e.~$(t,x)\in Q$, and the initial and boundary conditions for $v$ (\ref{eq2})$_2$-(\ref{eq3})$_2$ are satisfied.
The space $\mathcal{X}_u$ given in (\ref{st-2}) is defined as follow
\begin{equation*}\label{space_weak}
\mathcal{X}_u=\{u\in L^{10}(W^{1,10})\,:\, \partial_t u\in L^{5/2}(Q)\mbox{ and }u(T)=0\mbox{ in }\Omega\}.
\end{equation*}
\end{definition} 
\begin{remark}\label{rmk-1}
This definition of weak solution implies, in particular, that 
$$
u\in L^\infty(L^1)\ \mbox{ and }\ \int_\Omega u(t)=\int_\Omega u_0=m_0.
$$
Also, each term of (\ref{st-2}) has sense. In particular, from \eqref{st-1}-\eqref{st-11} one has that $u\nabla v\in L^{10/9}(Q)$.
\end{remark}
\begin{theorem}(Existence of weak solutions of (\ref{eq1})-(\ref{eq3}))\label{weak_solution}
There exists a weak solution $(u,v)$ of system (\ref{eq1})-(\ref{eq3}) in the sense of Definition \ref{weak}. 
\end{theorem}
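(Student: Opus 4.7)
My plan is to realize the weak solution as a limit of the family $(u_\varepsilon,v_\varepsilon)$ of (smoother) solutions of the regularized problem set up in the Appendix. The three main steps are: (i) obtain $\varepsilon$-independent a priori bounds in the norms dictated by \eqref{st-1}--\eqref{st-11}, (ii) extract a convergent subsequence by Aubin-Lions compactness, and (iii) pass to the limit in the variational formulation \eqref{st-2} and in the pointwise $v$-equation. Non-negativity is inherited in the limit as a closed condition, and the initial data are recovered through the time integration by parts built into \eqref{st-2} for $u$ and through the continuity in time of $v$.

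The heart of the proof is a combined entropy/energy estimate. Testing the regularized $u$-equation with (a smooth approximation of) $\log u_\varepsilon$ produces the dissipation $\int|\nabla u_\varepsilon|^2/u_\varepsilon$ plus a cross term $-\int\nabla u_\varepsilon\cdot\nabla v_\varepsilon$; testing the $v$-equation with $-\Delta v_\varepsilon$ produces $\|\Delta v_\varepsilon\|^2+\|\nabla v_\varepsilon\|^2$ plus the opposite cross term $+\int\nabla u_\varepsilon\cdot\nabla v_\varepsilon$. On adding, the cross terms cancel and the only right-hand side is the bilinear control piece, which is absorbed using Young and $H^1\hookrightarrow L^4$:
\begin{equation*}
\Bigl|\int f\,v_\varepsilon\,\chi_{_{\Omega_c}}\Delta v_\varepsilon\Bigr|\le\tfrac12\|\Delta v_\varepsilon\|^2+C\|f\|_{L^4}^2\|v_\varepsilon\|_{H^1}^2.
\end{equation*}
A Gronwall argument (also using the ODE for $\int_\Omega v_\varepsilon$ stated in the Remark to close $\|v_\varepsilon\|_{H^1}$) then delivers uniform control of $u_\varepsilon\log u_\varepsilon$ in $L^\infty(L^1)$, of $\nabla\sqrt{u_\varepsilon}$ in $L^2(Q)$, and of $v_\varepsilon$ in $L^\infty(H^1)\cap L^2(H^2)$. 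Combined with mass conservation $\|\sqrt{u_\varepsilon}\|_{L^\infty(L^2)}^2=m_0$, the 3D Gagliardo-Nirenberg interpolation yields $\sqrt{u_\varepsilon}\in L^{10/3}(Q)$, i.e.~$u_\varepsilon\in L^{5/3}(Q)$, while writing $\nabla u_\varepsilon=2\sqrt{u_\varepsilon}\,\nabla\sqrt{u_\varepsilon}$ and applying H\"older gives $u_\varepsilon\in L^{5/4}(W^{1,5/4})$. The time-derivative bound $\partial_t v_\varepsilon\in L^{5/3}(Q)$ follows from Lemma \ref{feireisl} with $p=5/3$, and $\partial_t u_\varepsilon\in[L^{10}(W^{1,10})]'$ is read off directly from the $u$-equation using $u_\varepsilon\nabla v_\varepsilon\in L^{10/9}(Q)$ (which follows from $\nabla v_\varepsilon\in L^{10/3}(Q)$ by interpolating $L^\infty(L^2)$ and $L^2(L^6)$).

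With these bounds, Aubin-Lions supplies strong convergence of $u_\varepsilon$ in $L^q(Q)$ for every $q<5/3$, of $v_\varepsilon$ in $L^2(H^1)$, and hence of $\nabla v_\varepsilon$ in $L^r(Q)$ for any $r<10/3$ after interpolating with the $L^\infty(L^2)\cap L^2(L^6)$ bound. The main technical obstacle is the limit passage in the chemotactic term $\int_0^T(u_\varepsilon\nabla v_\varepsilon,\nabla\overline{u})$: the product barely lives in $L^{10/9}(Q)$, so one must combine strong convergence of $u_\varepsilon$ at an exponent arbitrarily close to $5/3$ with weak convergence of $\nabla v_\varepsilon$ in $L^{10/3}(Q)$, paired against $\nabla\overline{u}\in L^{10}(Q)$. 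The bilinear control term is easier, handled by pairing strong convergence of $v_\varepsilon$ in $L^2(Q)$ with $f\in L^4(Q_c)$. Once these limits are verified, \eqref{st-2} results from integrating by parts in time, while the pointwise $v$-equation is recovered by weak closure of the linear parabolic operator $\partial_t-\Delta+I$ applied to the strongly converging $v_\varepsilon$.
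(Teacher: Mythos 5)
Your overall strategy coincides with the paper's: the same regularized family from the Appendix, the same entropy/energy estimate obtained by testing the $u$-equation with a (shifted) logarithm and the $v$-equation with $-\Delta v^\varepsilon$ so that the cross terms cancel, the same uniform bounds $u^\varepsilon\in L^{5/3}(Q)\cap L^{5/4}(W^{1,5/4})$, $v^\varepsilon\in L^\infty(H^1)\cap L^2(H^2)$, Aubin--Lions compactness, and the same strong-times-weak argument to pass to the limit in $u^\varepsilon\nabla v^\varepsilon$ and in the control term. (Your direct $L^{5/3}$ bound on the time derivative via Lemma \ref{feireisl} is a harmless variant of the paper's $[L^2(H^1)]'$ bound for $\partial_t z^\varepsilon$, since the regularity $\partial_t v\in L^{5/3}(Q)$ is in any case recovered from the limit equation.)

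The one genuine gap is your treatment of the sign of $v$ and, with it, the identification of the limit equation. The regularized system \eqref{regg-1} you invoke yields $u^\varepsilon\ge0$ (Lemma \ref{fix}), but it gives no sign information on $z^\varepsilon$ or on $v^\varepsilon=v(z^\varepsilon)$; precisely for this reason the production term there is the truncated one $f\,v(z^\varepsilon)_+\chi_{\Omega_c}$, a feature your outline never mentions. Hence non-negativity of $v$ is \emph{not} ``inherited in the limit as a closed condition'', and the weak closure of the parabolic operator only produces the truncated equation $\partial_t v-\Delta v+v=u+f\,v_+\chi_{\Omega_c}$, not \eqref{eq1}$_2$. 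To close the argument you must prove $v\ge0$ a posteriori, as the paper does: test the limit equation with $v_-=\min\{v,0\}\le0$, observe that $(u,v_-)\le0$ because $u\ge0$ and that $(f\,v_+\chi_{\Omega_c},v_-)=0$ since $v_+v_-\equiv0$, and conclude $v_-\equiv0$ by Gronwall. Only then does $v_+=v$ convert the truncated equation into \eqref{eq1}$_2$ and supply the sign condition $v\ge0$ required by Definition \ref{weak}; without this step the limit object is not a weak solution in the stated sense.
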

The proof of this theorem follows from the two next subsections. 

\subsection{Regularized Problem}

In order to prove Theorem \ref{weak_solution}, we will study the following family of regularized problems related to system (\ref{eq1})-(\ref{eq3}), for  any $\varepsilon\in(0,1)$.
Given an adequate regularization $(u_0^\varepsilon,v_0^\varepsilon)$ of initial data $(u_0,v_0)$, we define $(u^\varepsilon,z^\varepsilon)$ as the solution of

\begin{equation}\label{regg-1}
\left\{
\begin{array}{rcl}
\partial_tu^\varepsilon-\Delta u^\varepsilon&=&\nabla\cdot(u^\varepsilon\nabla v(z^\varepsilon))\quad \mbox{ in }Q,\\
\partial_tz^\varepsilon-\Delta z^\varepsilon+z^\varepsilon&=&u^\varepsilon+f\, v(z^\varepsilon)_+\chi_{_{\Omega_c}}\quad \mbox{ in } Q,\\
u^\varepsilon(0)=u^\varepsilon_0,\ z^\varepsilon(0)&=&v^\varepsilon_0-\varepsilon\Delta v_0^\varepsilon \quad \mbox{ in }\Omega\\
\dfrac{\partial u^\varepsilon}{\partial {\bf n}}=0,\ \dfrac{\partial z^\varepsilon}{\partial{\bf n}}&=&0\quad \mbox{ on }(0,T)\times\partial\Omega,
\end{array}
\right.
\end{equation}
where $v^\varepsilon:=v(z^\varepsilon)$ is the unique solution of the problem
\begin{equation}\label{regg-1-1}
\left\{
\begin{array}{rcl}
v^\varepsilon-\varepsilon\Delta v^\varepsilon&=&z^\varepsilon\quad\mbox{ in }\Omega,\\
\dfrac{\partial v^\varepsilon}{\partial{\bf n}}&=&0\quad \mbox{ on }\partial\Omega,
\end{array}
\right.
\end{equation}
and $v_+:=\max\{v,0\}\ge0$.

We choose the initial conditions $u^\varepsilon_0$ and $v^\varepsilon_0$, with $u_0^\varepsilon\ge 0$ in $\Omega$,  
such that $(u^\varepsilon_0,v_0^\varepsilon-\varepsilon\Delta v_0^\varepsilon)\in W^{4/5,5/3}(\Omega)\times W^{7/5,10/3}_{\bf n}(\Omega)$
and
\begin{equation}\label{reg1-1}
(u^\varepsilon_0,v_0^\varepsilon-\varepsilon\Delta v_0^\varepsilon)\rightarrow(u_0,v_0)\quad \mbox{in }L^2(\Omega)\times H^1(\Omega),\mbox{ as }\varepsilon\rightarrow0.
\end{equation}
In the remaining of this section, we will denote $v(z^\varepsilon)$ only by $v^\varepsilon$. 

\begin{definition}\label{sol-reg}
Let $u_0^\varepsilon\in W^{4/5,5/3}(\Omega)$, $v_0^\varepsilon-\varepsilon\Delta v_0^\varepsilon\in W^{7/5,10/3}_{{\bf n}}(\Omega)$ with $u_0^\varepsilon\ge 0$  in $\Omega$, 
and $f\in L^4(Q_c)$. We say that a pair $(u^\varepsilon,z^\varepsilon)$ is a (strong) solution of problem (\ref{regg-1}) in $(0,T)$, if
$u^\varepsilon\ge 0$ in $Q$, 
\begin{equation*}\label{sol-reg-1}
(u^\varepsilon,z^\varepsilon)\in X_{5/3}\times X_{10/3},
\end{equation*}
the  equations (\ref{regg-1})$_1$-(\ref{regg-1})$_2$ holds pointwisely a.e.~$(t,x)\in Q$, and the initial and boundary  conditions  (\ref{regg-1})$_3$-(\ref{regg-1})$_4$ are satisfied.
\end{definition}
\begin{remark}\label{compor}
Integrating  (\ref{regg-1})$_1$ in $\Omega$ we have
\begin{equation}\label{reg1-2}
\int_\Omega u^\varepsilon(t)=\int_\Omega u_0^\varepsilon:={m}^\varepsilon_0 \quad \forall t>0.
\end{equation}
In fact, $\|u^\varepsilon(t)\|_{L^1}=\|u^\varepsilon_0\|_{L^1}:=m^\varepsilon_0$. Moreover, integrating (\ref{regg-1})$_2$ in $\Omega$ we deduce 
$$
\frac{d}{dt}\left(\int_\Omega z^\varepsilon\right)+\int_\Omega z^\varepsilon={m}^\varepsilon_0+\int_\Omega f\,v^\varepsilon_+\chi_{_{\Omega_c}},
$$
which implies
\begin{equation*}\label{reg1-2-1}
\frac{d}{dt}\left(\int_\Omega z^\varepsilon\right)^2+\left(\int_\Omega z^\varepsilon\right)^2\le \left(m_0^\varepsilon+\int_\Omega f\,v^\varepsilon_+\chi_{_{\Omega_c}}\right)^2.
\end{equation*}
\end{remark}
\begin{theorem}\label{teo_reg}
There exists a strong solution $(u^\varepsilon,z^\varepsilon)\in X_{5/3}\times X_{10/3}$ of system (\ref{regg-1}) in $(0,T)$ in the sense of Definition \ref{sol-reg}.
\end{theorem}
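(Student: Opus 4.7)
The plan is to prove Theorem \ref{teo_reg} by a Leray--Schauder fixed-point argument, exploiting the smoothing by the elliptic regularization $v^\varepsilon = (I-\varepsilon\Delta)^{-1}z^\varepsilon$ to decouple the system into linear subproblems solvable via Lemma \ref{feireisl}. All constants below may (and will) depend on $\varepsilon$, which is acceptable since $\varepsilon\in(0,1)$ is fixed throughout this theorem; $\varepsilon$-uniform estimates are deferred to the passage to the limit in the next subsection.

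I would define the solution operator $\Lambda$ as follows. Given $(\hat u,\hat z)\in Y:=L^{10/3}(Q)\times L^{10/3}(Q)$, first solve the Neumann elliptic problem $\hat v-\varepsilon\Delta\hat v=\hat z$; classical elliptic regularity yields $\hat v\in L^{10/3}(W^{2,10/3})\hookrightarrow L^{10/3}(W^{1,\infty})$ (using $10/3>3$), so $\nabla\hat v\in L^{10/3}(L^\infty)$ with norm controlled by $C_\varepsilon\|\hat z\|_{L^{10/3}(Q)}$. Then solve successively the two linear Neumann heat problems
\begin{equation*}
\partial_tu-\Delta u=\nabla\cdot(\hat u\,\nabla\hat v),\qquad \partial_tz-\Delta z+z=\hat u+f\,\hat v_+\chi_{_{\Omega_c}},
\end{equation*}
with initial data $u_0^\varepsilon$ and $v_0^\varepsilon-\varepsilon\Delta v_0^\varepsilon$ respectively. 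By H\"older the RHS of the $u$-equation lies in $L^{5/3}(Q)$ and that of the $z$-equation in $L^{10/3}(Q)$, so Lemma \ref{feireisl} with $p=5/3$ and $p=10/3$ produces $u\in X_{5/3}$ and $z\in X_{10/3}$. Since $W^{2,5/3}\hookrightarrow L^\infty$ in 3D (because $2\cdot 5/3>3$), interpolation with the $C([0,T];\widehat{W}^{4/5,5/3})$-bound gives $u\in L^{10/3}(Q)$. Setting $\Lambda(\hat u,\hat z):=(u,z)$, the regularity gain from $Y$ into $X_{5/3}\times X_{10/3}$ combined with Aubin--Lions yields compactness of $\Lambda$ in $Y$, while continuity in $Y$ follows from the linearity of each subproblem in its data.

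For the remaining Leray--Schauder hypothesis, I would show that any $(u,z)\in Y$ satisfying $(u,z)=\sigma\Lambda(u,z)$ with $\sigma\in[0,1]$ admits a $\sigma$-uniform bound in $Y$: standard energy estimates (testing the $u$-equation with $u$ and the $z$-equation with $-\Delta z$), combined with the elliptic gain $\|\nabla v^\varepsilon\|_{L^\infty}\le C_\varepsilon\|z^\varepsilon\|_{L^{10/3}}$ and Young's inequality, close the estimate. Nonnegativity $u^\varepsilon\ge 0$ for the fixed point follows by testing the $u$-equation with the negative part $u_-^\varepsilon$; crucially, the truncation $v_+^\varepsilon$ in the source of the $z$-equation makes the $z$-problem well-posed irrespective of the sign of $u^\varepsilon$, so the standard chemotaxis sign argument goes through. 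A final bootstrap via Lemma \ref{feireisl} places the fixed point in $X_{5/3}\times X_{10/3}$, as required by Definition \ref{sol-reg}.

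The main obstacle is the supercritical scaling of the chemotactic nonlinearity in 3D: for the raw $v^\varepsilon\in X_{10/3}$ alone one could not absorb $\nabla\cdot(u^\varepsilon\nabla v^\varepsilon)$ within the $W^{2,1}_p$-framework. It is precisely the two-derivative gain from inverting $I-\varepsilon\Delta$ that makes the H\"older matching $L^{10/3}\cdot L^{10/3}\subset L^{5/3}$ above legitimate. A secondary technical difficulty is the careful choice of the Banach space $Y$ so that $\Lambda$ is simultaneously compact and continuous while the bootstrap from a fixed point in $Y$ up to the target class $X_{5/3}\times X_{10/3}$ is automatic; the choice $Y=L^{10/3}(Q)\times L^{10/3}(Q)$ (or any slightly smoother variant) balances these requirements.
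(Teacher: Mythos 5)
Your overall strategy (Leray--Schauder for the decoupled linearized system, using the elliptic gain of $(I-\varepsilon\Delta)^{-1}$ and Lemma \ref{feireisl}) is the same as the paper's, but two steps of your proposal have genuine gaps. First, the choice of fixed-point space $Y=L^{10/3}(Q)\times L^{10/3}(Q)$ does not make the operator well defined into $X_{5/3}\times X_{10/3}$: the source of the $u$-equation is $\nabla\cdot(\hat u\nabla\hat v)=\hat u\Delta\hat v+\nabla\hat u\cdot\nabla\hat v$, and while $\hat u\Delta\hat v\in L^{5/3}(Q)$ and the flux $\hat u\nabla\hat v\in L^{5/3}(Q)$, the term $\nabla\hat u\cdot\nabla\hat v$ requires control of $\nabla\hat u$, which $Y$ does not provide (for $\hat u\in L^{10/3}(Q)$ the divergence is only a distribution, and Lemma \ref{feireisl} needs $g\in L^p(Q)$; maximal regularity with divergence-form data would at best give one spatial derivative, not membership in $X_{5/3}$). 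This is exactly why the paper works in $\mathcal{X}\times\mathcal{X}$ with $\mathcal{X}=L^\infty(L^2)\cap L^2(H^1)$: then $\nabla\overline u\in L^2(Q)$ pairs with $\nabla\overline v\in L^{10}(Q)$ (coming from $\overline v\in L^\infty(H^2)\cap L^2(H^3)$ by elliptic regularity for (\ref{regg-1-1})) to put the full source in $L^{5/3}(Q)$, see the proof of Lemma \ref{compact}. Your $Y$ would have to be replaced by a space encoding gradient information, which essentially forces the paper's choice.

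Second, and more importantly, the $\sigma$-uniform a priori bound for fixed points cannot be closed by the ``standard energy estimates'' you invoke. Testing the $u$-equation with $u$ produces the term $\sigma\int_\Omega u\,\nabla v\cdot\nabla u$, and every way of estimating it (via $\|\nabla v\|_{L^\infty}\le C_\varepsilon\|z\|_{L^{10/3}}$ or $\|\nabla v\|_{L^4}$) leaves a Gronwall coefficient depending on norms of the unknown $z$ (hence, through the $z$-equation, on $u$ itself); the resulting differential inequality is superlinear in the energy and only yields local-in-time control, not a bound on all of $(0,T)$, even for fixed $\varepsilon$. The paper's Lemma \ref{fix} resolves this with the entropy structure specific to chemo-repulsion: testing the $u$-equation by $\ln(u^\varepsilon+1)$ and the $z$-equation, rewritten in terms of $v^\varepsilon$, by $-\Delta v^\varepsilon$, the dangerous chemotaxis and cross terms cancel up to the harmless term in (\ref{F-2})--(\ref{F-3}), giving the linear-in-energy inequality (\ref{F-6}) and the global bound (\ref{F-7}) for $v^\varepsilon$ first; only then is the $L^2$-estimate for $u^\varepsilon$ performed, with $\|\nabla v^\varepsilon\|_{L^4}^8$ already bounded by a constant. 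This cancellation is the crux of the proof and is absent from your argument. A minor further point: nonnegativity of $u^\varepsilon$ has nothing to do with the truncation $v_+^\varepsilon$ (which only serves the $z$-equation); the paper inserts $\overline u_+^\varepsilon$ in the chemotactic flux in (\ref{regg-2}) precisely so that testing with $u_-^\varepsilon$ makes the nonlinear term vanish identically — without that truncation you would need an extra Gronwall argument using $\Delta v^\varepsilon\in L^1(L^\infty)$.
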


The proof of Theorem \ref{teo_reg} is carried out in the Appendix.

\subsection{Proof of Theorem \ref{weak_solution}. Taking limit as $\varepsilon\rightarrow0.$}


From the energy inequality (\ref{F-6}) (see the proof of Lemma \ref{fix} in the Appendix) and the conservativity  property (\ref{reg1-2}) we deduce the following estimates
(uniformly with respect to $\varepsilon$)
\begin{equation}\label{cot-1}
\left\{
\begin{array}{l}
\{\nabla\sqrt{u^\varepsilon+1}\}_{\varepsilon>0}\quad\mbox{ is bounded in }L^2(Q),\\
\{\sqrt{u^\varepsilon+1}\}_{\varepsilon>0}\quad\mbox{ is bounded in }L^\infty(L^2)\cap L^2(L^6)\hookrightarrow L^{10/3}(Q)\cap L^8(L^{12/5}),\\
\{v^\varepsilon\}_{\varepsilon>0}\quad\mbox{ is bounded in }L^\infty(H^1)\cap L^2(H^2),\\
\{\sqrt\varepsilon\Delta v^\varepsilon\}_{\varepsilon>0}\quad\mbox{ is bounded in }L^\infty(L^2)\cap L^2(H^1),
\end{array}
\right.
\end{equation}
which implies 
\begin{equation}\label{cot-2}
\left\{
\begin{array}{l}
\{u^\varepsilon\}_{\varepsilon>0}\quad\mbox{ is bounded in } L^{5/3}(Q)\cap L^4(L^{6/5}),\\
\{z^\varepsilon\}_{\varepsilon>0}\quad\mbox{ is bounded in }L^\infty(L^2)\cap L^2(H^1),\\
\{\partial_tu^\varepsilon\}_{\varepsilon>0}\quad\mbox{ is bounded in }[L^{10}(W^{1,10})]',\\
\{\partial_tz^\varepsilon\}_{\varepsilon>0}\quad\mbox{ is bounded in }[L^{2}(H^1)]'.
\end{array}
\right.
\end{equation}
On the other hand, taking into account that $\nabla u^\varepsilon=2\sqrt{u^\varepsilon+1}\nabla\sqrt{u^\varepsilon+1}$, from (\ref{cot-1})$_1$ and (\ref{cot-1})$_2$ we deduce that 
\begin{equation}\label{cot-4}
\{u^\varepsilon\}_{\varepsilon>0}\quad\mbox{ is bounded in }L^{5/4}(W^{1,5/4}).
\end{equation}
Also, from (\ref{cot-1})$_3$ we have that $\{\nabla v^\varepsilon\}_{\varepsilon>0}$ is bounded in $L^\infty(L^2)\cap L^2(H^1)\hookrightarrow L^{10/3}(Q)$, which
jointly to (\ref{cot-2})$_1$ implies that
\begin{equation}\label{cot-5}
\{u^\varepsilon\nabla v^\varepsilon\}_{\varepsilon>0}\quad\mbox{ is bounded in }L^{10/9}(Q).
\end{equation}
Notice that from (\ref{regg-1-1}) and (\ref{cot-1})$_4$ we obtain that 
\begin{equation}\label{dif-v-z}
z^\varepsilon-v^\varepsilon=-\varepsilon\Delta v^\varepsilon\rightarrow0 \quad \hbox{as $\varepsilon\rightarrow 0$, \quad in the $L^\infty(L^2)\cap L^2(H^1)$-norm.}
\end{equation}
Therefore, from (\ref{cot-1}), (\ref{cot-2}), (\ref{cot-4}) and (\ref{dif-v-z}),  we deduce that there exists limit functions $(u,v)$ such that 
$$
\left\{
\begin{array}{l}
u\in L^{5/3}(Q)\cap L^{5/4}(W^{1,5/4}),\\
v\in L^\infty(H^1)\cap L^2(H^2),
\end{array}
\right.
$$
and for some subsequence of $\{(u^\varepsilon,v^\varepsilon,z^\varepsilon)\}_{\varepsilon>0}$, still denoted by 
$\{(u^\varepsilon,v^\varepsilon,z^\varepsilon)\}_{\varepsilon>0}$, the following convergences holds, as $\varepsilon\rightarrow0$,
\begin{equation}\label{cot-7}
\left\{
\begin{array}{rcl}
u^\varepsilon&\rightarrow& u\quad\mbox{ weakly in } L^{5/3}(Q)\cap L^{5/4}(W^{1,5/4}),\\
v^\varepsilon&\rightarrow& v \quad\mbox{ weakly in }L^2(H^2)\mbox{ and weakly* in }L^\infty(H^1),\\
z^\varepsilon&\rightarrow& v \quad \mbox{ weakly in }L^2(H^1)\mbox{ and weakly* in }L^\infty(L^2),\\
\partial_tu^\varepsilon&\rightarrow&\partial_tu \quad\mbox{ weakly* in }[L^{10}(W^{1,10})]',\\
\partial_tz^\varepsilon&\rightarrow&\partial_tv \quad\mbox{ weakly* in }[L^{2}(H^1)]'.
\end{array}
\right.
\end{equation}
We will verify that $(u,v)$ is a weak solution of (\ref{eq1})-(\ref{eq3}). 
From (\ref{cot-2})$_3$, (\ref{cot-4}) and the Aubin-Lions lemma (see \cite[Th\'eor\`eme 5.1, p. 58]{lions}) we deduce that 
\begin{equation}\label{cot-6}
\{u^\varepsilon\}_{\varepsilon>0} \mbox{ is relatively compact in }L^{5/4}(L^2)\ (\mbox{and also in }L^p(Q),\, \forall p<5/3 ). 
\end{equation}
Thus, from (\ref{cot-7})$_2$, (\ref{cot-6}) and taking into account (\ref{cot-5}) we have
\begin{equation}\label{cot-9}
u^\varepsilon\nabla v^\varepsilon\rightarrow u\nabla v \quad \mbox{ weakly in } L^{10/9}(Q).
\end{equation}

On the other hand, from (\ref{cot-7})$_3$, (\ref{cot-7})$_5$,  \cite[Th\'eor\`eme 5.1, p. 58]{lions} and \cite[Corollary 4]{simon} we obtain
\begin{equation}\label{compact-z}
z^\varepsilon\rightarrow v\ \mbox{ strongly in }\ L^2(Q)\cap C([0,T];(H^1)').
\end{equation}
Thus, from (\ref{dif-v-z}), (\ref{cot-7})$_2$ and (\ref{compact-z})  we deduce that $v^\varepsilon$ converges to $v$  strongly  in $L^2(Q)$, which implies
$$
v^\varepsilon_+\rightarrow v_+\ \mbox{ strongly in }L^2(Q).
$$
Then, using that $\{v^\varepsilon\}_{\varepsilon>0}$ is bounded in $L^\infty(H^1)\cap L^2(H^2)\hookrightarrow L^{10}(Q)$ and $f\in L^4(Q_c)$, we deduce 
\begin{equation}\label{cot-10}
f\,v^\varepsilon_+ \chi_{_{\Omega_c}}\rightarrow f\,v_+ \chi_{_{\Omega_c}} \quad \mbox{ weakly in }L^{20/7}(Q).
\end{equation}
Also from (\ref{compact-z}),  $z^\varepsilon(0)$ converges to $v(0)$ in $H^1(\Omega)'$, then from (\ref{reg1-1}) and the uniqueness of the limit we have $v(0)=v_0$, which is the initial condition given in (\ref{eq2})$_2$.

Therefore, taking to the limit   in the regularized problem (\ref{regg-1}), as $\varepsilon\rightarrow0$, and taking into account (\ref{reg1-1}), (\ref{cot-7}), (\ref{cot-9}) and (\ref{cot-10}) we conclude that $(u,v)$ satisfies the weak formulation
\begin{eqnarray}
&&
-\int_0^T\langle u,\partial_t\overline{u}\rangle+\int_0^T(\nabla u,\nabla\overline{u})+\int_0^T(u\nabla v, \nabla\overline{u})=(u_0,\overline{u}(0))\quad \forall \,\overline{u} \in \mathcal{X}_u,\label{cot-11}\vspace{0.3cm}\\
&&\displaystyle\int_0^T\langle\partial_tv,\overline{z}\rangle+\int_0^T(\nabla v,\nabla\overline{z})+\int_0^T(v,\bar{z})
=\int_0^T(u,\overline{z})+\int_0^T(f\,v_+\chi_{_{\Omega_c}},\overline{z})
\quad \forall\, \overline{z}\in L^{2}(H^1).\label{cot-12}
\end{eqnarray}
Integrating by parts in (\ref{cot-12}), and using that $u\in L^{5/3}(Q)$ and $v\in L^2(H^2)$, we deduce that $v$ is the unique solution of the problem 
\begin{equation}\label{cot-13}
\left\{
\begin{array}{rcl}
\partial_tv-\Delta v+v&=&u+f\,v_+\chi_{_{\Omega_c}}\quad\mbox{ in } L^{5/3}(Q),\\
v(0)&=&v_0\quad\mbox{ in }\Omega,\\
\dfrac{\partial v}{\partial{\bf n}}&=&0\quad\mbox{ on }(0,T)\times\partial\Omega.
\end{array}
\right.
\end{equation}
Finally, we will check the positivity of  $(u,v)$.  Indeed, the positivity of $u$  follow from (\ref{cot-6}) and the fact that $u^\varepsilon\ge0$ a.e. $(t,x)\in Q$ (see Lemma \ref{fix} in the Appendix). 
In order to check that $v\ge0$, we test (\ref{cot-13})$_1$ by $v_-:=\min\{v,0\}\le0$, taking into account that $u\ge 0$, and using
that $v_-=0$ if $v\ge0$, $\nabla v_-=\nabla v$ if $v\le0$ and $\nabla v_-=0$ if $v>0$, we obtain
$$
\frac12\frac{d}{dt}\|v_-\|^2+\|\nabla v_-\|^2+\|v_-\|^2=(u,v_-)+(f\,v_+\chi_{_{\Omega_c}},v_-)\le0,
$$ 
which implies that $v_-\equiv0$, then $v\ge0$ a.e. $(t,x)\in Q$. Thus, since $v_+\equiv v$ then $v\ge0$ is also a solution of the $v$-equation (\ref{eq1})$_2$.

\section{Regularity Criterion}
\label{sec:4}

In this section we will give a regularity criterion  of  system (\ref{eq1})-(\ref{eq3}).

\begin{definition}(Strong solution of problem (\ref{eq1})-(\ref{eq3}))\label{regul}
Let $f\in L^4(Q_c)$, $u_0\in H^1(\Omega)$, $v_0\in W^{3/2,4}_{{\bf n}}(\Omega)$ with $u_0\ge 0$ and $v_0\ge 0$ in $\Omega$. A pair $(u,v)$ is called strong solution of problem
(\ref{eq1})-(\ref{eq3}) in (0,T), if $u\ge0$, $v\ge 0$ in $Q$,
\begin{eqnarray}\label{eqq2}
(u,v)\in X_2\times X_4,
\end{eqnarray}
the  system (\ref{eq1}) holds pointwisely a.e.~$(t,x)\in Q$, and the initial and boundary   conditions (\ref{eq2}) and (\ref{eq3}) are satisfied.
\end{definition}
\begin{remark}
Using the interpolation inequality (\ref{interpol}), Gronwall lemma and proceeding as for the Navier-Stokes
equations (see \cite{temam}), we can deduce the uniqueness of strong solutions of (\ref{eq1})-(\ref{eq3}).
\end{remark}
\begin{theorem}(Regularity Criterion)\label{regularity}
Let $(u,v)$ be a weak solution of (\ref{eq1})-(\ref{eq3}). If, in addition,
$u_0\in H^1(\Omega)$, $v_0\in W^{3/2,4}_{{\bf n}}(\Omega)$ 
 and the following regularity criterion holds
\begin{equation}\label{hyp-regul}
u\in L^{20/7}(Q),
\end{equation}
then $(u,v)$ is a strong solution
of (\ref{eq1})-(\ref{eq3}) in sense of Definition \ref{regul}. Moreover, there exists a positive constant 
$K=K(\|u_0\|_{H^1},\|v_0\|_{W_{\bf n}^{3/2,4}},\|f\|_{L^4(Q)})$ such that
\begin{equation}\label{bound-strong}
\|u,v\|_{X_2\times X_4}\le K.
\end{equation}
\end{theorem}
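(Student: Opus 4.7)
The plan is to bootstrap regularity in four steps, alternating the parabolic regularity of Lemma \ref{feireisl} applied to the $v$-equation with energy estimates on the $u$-equation; the criterion $u\in L^{20/7}(Q)$ opens the bootstrap.

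First I upgrade $v$. From the weak solution one has $v\in L^\infty(H^1)\cap L^2(H^2)\hookrightarrow L^{10}(Q)$, so H\"older ($\tfrac14+\tfrac1{10}=\tfrac7{20}$) together with $f\in L^4(Q_c)$ puts $fv\chi_{_{\Omega_c}}$ in $L^{20/7}(Q)$; combined with the criterion, the RHS of (\ref{eq1})$_2$ lies in $L^{20/7}(Q)$. Since $v_0\in W^{3/2,4}_{\bf n}\hookrightarrow W^{13/10,20/7}$ in 3D, Lemma \ref{feireisl} with $p=20/7$ gives $v\in X_{20/7}$; in particular $\Delta v\in L^{20/7}(Q)$ and $v\in L^\infty(Q)$ via $W^{13/10,20/7}\hookrightarrow L^\infty$ (since $\tfrac{13}{10}\cdot\tfrac{20}{7}>3$).

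Next I upgrade $u$. Testing (\ref{eq1})$_1$ by $u^2$ and integrating by parts the chemotactic term gives $\tfrac13\tfrac{d}{dt}\|u\|^3_{L^3}+\tfrac89\|\nabla u^{3/2}\|^2=\tfrac23\int u^3\Delta v$. H\"older ($\tfrac{13}{20}+\tfrac{7}{20}=1$) yields $|\int u^3\Delta v|\le\|u^{3/2}\|^2_{L^{40/13}}\|\Delta v\|_{L^{20/7}}$; 3D Gagliardo--Nirenberg for $u^{3/2}$ combined with (\ref{equi}) and Young's inequality to absorb $\|\nabla u^{3/2}\|^2$ produce the differential inequality
$$
\tfrac{d}{dt}\|u\|^3_{L^3}\le C\bigl(\|\Delta v\|_{L^{20/7}}+\|\Delta v\|^{40/19}_{L^{20/7}}\bigr)\|u\|^3_{L^3}.
$$
Both weights are in $L^1(0,T)$ because $40/19\le 20/7$, so Gronwall gives $u\in L^\infty(L^3)$ with $\nabla u^{3/2}\in L^2(Q)$; hence $u^{3/2}\in L^\infty(L^2)\cap L^2(H^1)\hookrightarrow L^{10/3}(Q)$, i.e.\ $u\in L^5(Q)\subset L^4(Q)$. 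Feeding $u\in L^4(Q)$ and $fv\chi_{_{\Omega_c}}\in L^4(Q)$ (using $v\in L^\infty(Q)$ from Step~1) into Lemma \ref{feireisl} with $p=4$ and $v_0\in W^{3/2,4}_{\bf n}=\widehat{W}^{2-1/2,4}$ then yields $v\in X_4$, hence $\Delta v\in L^4(Q)$ and $\nabla v\in L^4(L^\infty)$ via the 3D embedding $W^{1,4}\hookrightarrow L^\infty$.

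Finally, to obtain $u\in X_2$ I test (\ref{eq1})$_1$ by $-\Delta u$: using the Neumann boundary condition, $(u\Delta v,-\Delta u)$ is controlled by H\"older ($\tfrac14+\tfrac14+\tfrac12=1$) with $u\in L^4(Q)$ and $\Delta v\in L^4(Q)$ and absorbed into $\epsilon\|\Delta u\|^2$, while $(\nabla u\cdot\nabla v,-\Delta u)$ is bounded by $\|\nabla u\|\|\nabla v\|_{L^\infty}\|\Delta u\|$, generating a Gronwall weight $\|\nabla v\|^2_{L^\infty}\in L^2(0,T)$. This gives $u\in L^\infty(H^1)\cap L^2(H^2)$; reading back $\partial_tu=\Delta u+\nabla u\cdot\nabla v+u\Delta v$, each summand is in $L^2(Q)$, so $u\in X_2$. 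Tracking constants through each step yields the quantitative bound (\ref{bound-strong}). The main obstacle is Step~2: the Young pairing produces the exponent $40/19$ and the Gronwall argument only closes because $40/19\le 20/7$, which singles out $L^{20/7}(Q)$ as the scaling-critical space for the criterion; a weaker integrability on $u$ would break exactly this step.
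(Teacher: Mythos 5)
Your Step 1 coincides with the paper's and is correct: the criterion plus $v\in L^{10}(Q)$ and $f\in L^4(Q_c)$ put the right-hand side of (\ref{eq1})$_2$ in $L^{20/7}(Q)$, and Lemma \ref{feireisl} gives $v\in X_{20/7}$, hence $\Delta v\in L^{20/7}(Q)$ and $v\in L^\infty(Q)$. The genuine problem is in Steps 2 and 3: you obtain the $L^3$- and $H^1$-estimates by testing (\ref{eq1})$_1$ with $u^2$ and $-\Delta u$, but the object being tested is only a weak solution in the sense of Definition \ref{weak}, i.e. $u\in L^{5/3}(Q)\cap L^{5/4}(W^{1,5/4})$ with $\partial_tu\in[L^{10}(W^{1,10})]'$, together with the hypothesis $u\in L^{20/7}(Q)$. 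With this regularity the pairing $\langle\partial_tu,u^2\rangle$ is not even defined (the admissible test functions in (\ref{st-2}) belong to $\mathcal{X}_u\subset L^{10}(W^{1,10})$, and $u^2$ is nowhere near that class), the chain rule $\langle\partial_tu,u^2\rangle=\tfrac13\tfrac{d}{dt}\|u\|^3_{L^3}$ is not justified, and the quantities in your differential inequality are not known to be finite: for instance $u^3\in L^{20/21}(Q)$ and $\Delta v\in L^{20/7}(Q)$ do not give $u^3\Delta v\in L^1(Q)$, and $\int_\Omega u|\nabla u|^2$ is likewise out of reach of the known regularity. For a statement whose entire content is to upgrade an \emph{arbitrary} weak solution, this is not a dismissible formality: you cannot perform the estimate on a Galerkin or regularized approximation and transfer it to the given $(u,v)$, since uniqueness of weak solutions is unknown, and nonlinear testings cannot be legitimized by identifying $u$ with the solution of a linear problem. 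The standard rigorous routes are a weak--strong uniqueness/continuation argument (as for Serrin-type criteria for Navier--Stokes) or the paper's strategy, and you supply neither; the same objection applies to the $-\Delta u$ test in your last step.

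This is precisely why the paper never tests the $u$-equation with nonlinear functions of $u$: at each stage it only verifies that $\nabla\cdot(u\nabla v)=u\Delta v+\nabla u\cdot\nabla v$ lies in some $L^p(Q)$ using the regularity already in hand, and then invokes maximal regularity for the linear Neumann heat problem (Lemma \ref{feireisl}) to conclude $u\in X_p$, climbing through $X_{20/19}, X_{10/9}, X_{20/17}, X_{5/4}, X_{4/3}, X_{10/7}, X_{20/13}$ to reach $u\in L^\infty(L^2)\cap L^2(H^1)$, and then $X_{5/3}$ and finally $X_2$, with parallel upgrades $v\in X_{10/3}$ and $v\in X_4$; each step is legitimate for the given weak solution because it reduces to uniqueness for a linear equation with a known right-hand side. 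Your computations are correct as formal a priori estimates and would yield a shorter proof if the testings were first legitimized (e.g.\ by running enough of the linear bootstrap beforehand); as written, skipping that justification is the gap. A minor side remark: the Young/Gronwall requirement in your Step 2 is $1/(1-\theta)=40/19\le p$, i.e.\ $p\ge 5/2$, so your argument does not single out $L^{20/7}(Q)$ as critical; the exponent $20/7$ is chosen in the paper for compatibility with the cost functional, not because this energy step forces it.
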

The proof of this theorem follows from the two next subsections.

\subsection{Interpolation and embedding results}\label{ss-ie}
In order to proof  Theorem \ref{regularity}, starting from the regularity of $u$ and $v$, we will get the regularity for $\nabla \cdot (u\nabla v)$ which improves the regularity for $u$. 
With this new regularity for $u$, the regularity for $\nabla \cdot (u\nabla v)$ is improved several times using a bootstraping argument. 
Along the proof of Theorem \ref{regularity}, different interpolation results  will be used together with some embeddings results that will be stated below.

As a consequence of the interpolation inequality
$$
\|u\|_{L^p}\le\|u\|^{1-\theta}_{L^{p_1}}\|u\|^\theta_{L^{p_2}},\ \mbox{ with }\ \frac1p=\frac{1-\theta}{p_1}+\frac{\theta}{p_2}\ \mbox{ and }\ \theta\in[0,1]
$$
we have the following result
\begin{lemma}\label{l3}
Let $p_1,p_2,q_1,q_2,p,q\ge1$ such that 
$$
\frac1q=\frac{1-\theta}{q_1}+\frac{\theta}{q_2}\ \mbox{ and }\ \frac1p=\frac{1-\theta}{p_1}+\frac{\theta}{p_2}, \mbox{ with }\theta\in[0,1].
$$
Then,
\begin{equation}\label{l5-e1}
L^{p_1}(L^{q_1}) \cap L^{p_2}(L^{q_2}) \hookrightarrow L^p(L^q).
\end{equation}
\end{lemma}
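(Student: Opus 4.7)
The plan is a two-step application of Hölder/interpolation: first interpolate in space pointwise in time, then use Hölder in time. I treat the degenerate cases $\theta\in\{0,1\}$ trivially and assume $\theta\in(0,1)$.

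First, fix $u\in L^{p_1}(L^{q_1})\cap L^{p_2}(L^{q_2})$. For a.e.\ $t\in(0,T)$, the classical Lebesgue interpolation inequality in $\Omega$ gives
\begin{equation*}
\|u(t)\|_{L^q}\le \|u(t)\|_{L^{q_1}}^{1-\theta}\,\|u(t)\|_{L^{q_2}}^{\theta},
\end{equation*}
since $\frac1q=\frac{1-\theta}{q_1}+\frac{\theta}{q_2}$ and $\theta\in[0,1]$.

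Second, raise to the $p$-th power and integrate in $t$. I would then apply Hölder's inequality in time with the conjugate exponents
\begin{equation*}
r_1=\frac{p_1}{p(1-\theta)},\qquad r_2=\frac{p_2}{p\theta},
\end{equation*}
which are admissible ($r_1,r_2\in[1,\infty]$) and conjugate thanks to the relation on $p$:
\begin{equation*}
\frac{1}{r_1}+\frac{1}{r_2}=\frac{p(1-\theta)}{p_1}+\frac{p\theta}{p_2}=p\left(\frac{1-\theta}{p_1}+\frac{\theta}{p_2}\right)=1.
\end{equation*}
Hence
\begin{equation*}
\int_0^T\|u(t)\|_{L^q}^{p}\,dt\le \int_0^T\|u(t)\|_{L^{q_1}}^{p(1-\theta)}\,\|u(t)\|_{L^{q_2}}^{p\theta}\,dt\le \|u\|_{L^{p_1}(L^{q_1})}^{p(1-\theta)}\,\|u\|_{L^{p_2}(L^{q_2})}^{p\theta},
\end{equation*}
and taking the $p$-th root yields
\begin{equation*}
\|u\|_{L^p(L^q)}\le \|u\|_{L^{p_1}(L^{q_1})}^{1-\theta}\,\|u\|_{L^{p_2}(L^{q_2})}^{\theta},
\end{equation*}
which is precisely the continuous embedding \eqref{l5-e1}.

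There is no real obstacle here; the only mild care point is the book-keeping of degenerate cases. If $\theta=0$ (resp.\ $\theta=1$) then $p=p_1$, $q=q_1$ (resp.\ $p=p_2$, $q=q_2$) and the embedding is trivial, so one can assume $\theta\in(0,1)$ to ensure $r_1,r_2<\infty$. The cases $q_1=\infty$ or $q_2=\infty$ (and likewise for $p_i$) are handled by the usual convention in Hölder, with no change in the argument.
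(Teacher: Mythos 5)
Your proof is correct and follows exactly the route the paper intends: the lemma is stated there as a direct consequence of the spatial interpolation inequality $\|u\|_{L^q}\le\|u\|_{L^{q_1}}^{1-\theta}\|u\|_{L^{q_2}}^{\theta}$, combined (implicitly) with H\"older in time, which is precisely your two-step argument with the conjugate exponents $r_1=p_1/(p(1-\theta))$ and $r_2=p_2/(p\theta)$. Your treatment of the degenerate cases $\theta\in\{0,1\}$ and of infinite exponents is a harmless addition and changes nothing essential.
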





Using the Sobolev embedding 
$$
W^{r,p}(\Omega)\hookrightarrow L^q(\Omega),\ \mbox{ with }\ \frac1q=\frac1p-\frac rN,
$$
where $N$ is the space-dimension and the Gagliardo-Nirenberg inequality (see \cite[Theorem 10.1]{friedman})
$$
W^{s,p_1}(\Omega)\cap L^{p_2}(\Omega)\hookrightarrow L^p(\Omega),\ \mbox{ with }\ \frac1p=\theta\left(\frac{1}{p_1}-\frac sN\right)+\frac{1-\theta}{p_2}\mbox{ and }\theta\in[0,1]
$$
we deduce the following result
\begin{lemma}\label{l4}
Let $p_1,q_1,p_2,p,q\ge1$  such that 
$$
\frac{1}{q}=\frac{1-\theta}{q_1}+\theta\left(\frac{1}{p_1}-\frac{r}{N} \right) \mbox{ and }
\frac1p=\frac{\theta}{p_2}  \mbox{ with $\theta\in[0,1]$ and $r>0$}.
$$ 
Then,
\begin{equation*}\label{l4-e1}
L^{\infty}(L^{q_1}) \cap L^{p_2}(W^{r,p_1}) \hookrightarrow L^p(L^q).
\end{equation*}
\end{lemma}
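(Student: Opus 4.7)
The plan is to prove the embedding pointwise in time via Gagliardo--Nirenberg, and then integrate in time using the fact that the stated relation $1/p = \theta/p_2$ makes the time exponents match exactly.

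First, for fixed $t \in (0,T)$, I would apply the Gagliardo--Nirenberg inequality cited just before the lemma to the spatial function $u(t,\cdot)$ with the choice of parameters indicated by the hypothesis. Since
\[
\frac{1}{q}=(1-\theta)\frac{1}{q_1}+\theta\!\left(\frac{1}{p_1}-\frac{r}{N}\right),\qquad \theta\in[0,1],
\]
this gives a pointwise in time bound
\[
\|u(t,\cdot)\|_{L^q} \le C\,\|u(t,\cdot)\|_{W^{r,p_1}}^{\theta}\,\|u(t,\cdot)\|_{L^{q_1}}^{1-\theta}.
\]
This is the only spatial inequality needed; the hypothesis $r>0$ is what makes Gagliardo--Nirenberg applicable with a strict gain.

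Next, I would raise to the $p$-th power and integrate in $t$ on $(0,T)$. Using that $\|u(t,\cdot)\|_{L^{q_1}} \le \|u\|_{L^\infty(L^{q_1})}$ almost everywhere, I get
\[
\|u\|_{L^p(L^q)}^{p} \le C^{p}\,\|u\|_{L^\infty(L^{q_1})}^{(1-\theta)p}\,\int_0^T \|u(t,\cdot)\|_{W^{r,p_1}}^{\theta p}\,dt.
\]
The crucial algebraic observation is that the relation $1/p = \theta/p_2$ is exactly equivalent to $\theta p = p_2$. Therefore the time integral above is precisely $\|u\|_{L^{p_2}(W^{r,p_1})}^{p_2}$, and taking the $1/p$-th power yields
\[
\|u\|_{L^p(L^q)} \le C\,\|u\|_{L^\infty(L^{q_1})}^{1-\theta}\,\|u\|_{L^{p_2}(W^{r,p_1})}^{\theta},
\]
which is the desired continuous embedding.

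There is essentially no obstacle beyond bookkeeping: the hypothesis on $1/q$ is tailored so that Gagliardo--Nirenberg applies pointwise in time, and the hypothesis $1/p = \theta/p_2$ is tailored so that the $\theta p$ power of the spatial Sobolev norm integrates to give exactly the $L^{p_2}(W^{r,p_1})$ norm. The endpoint cases $\theta=0$ (where the embedding reduces to $L^\infty(L^{q_1})\hookrightarrow L^\infty(L^{q_1})$, with $p=\infty$, $q=q_1$) and $\theta=1$ (where it reduces to the Sobolev embedding $L^{p_2}(W^{r,p_1})\hookrightarrow L^{p_2}(L^{q})$, with $p=p_2$) are consistent and require no separate treatment.
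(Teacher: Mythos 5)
Your proof is correct and follows exactly the route the paper intends: the lemma is stated as a direct consequence of the Gagliardo--Nirenberg interpolation inequality applied in space for a.e.\ $t$, followed by integration in time, with the condition $1/p=\theta/p_2$ ensuring $\theta p=p_2$ so the time exponents match. The bookkeeping, including the endpoint cases $\theta=0$ and $\theta=1$, is accurate.
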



\begin{lemma} \label{l6} (\cite[Theorem 7.58, p.218]{adams})
Let $1<p<2$, and $r, s>0$   such that  
$$
s=N\left( \frac{1}{2}-\frac{1}{p} \right)+r.
$$ 
Then,
\begin{equation*}\label{l6-e1}
W^{r,p}(\Omega) \hookrightarrow H^s(\Omega).
\end{equation*}
\end{lemma}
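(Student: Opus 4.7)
The plan is to reduce the statement to $\mathbb{R}^N$ and then exploit the Bessel-potential characterization of fractional Sobolev spaces. Since $\Omega$ has $C^2$ boundary, Stein's extension theorem furnishes a bounded linear extension operator $E:W^{r,p}(\Omega)\to W^{r,p}(\mathbb{R}^N)$. Combined with restriction, it then suffices to establish the whole-space embedding $W^{r,p}(\mathbb{R}^N)\hookrightarrow H^s(\mathbb{R}^N)$ and carry the constants back to $\Omega$.

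On $\mathbb{R}^N$ and for $1<p<\infty$, the Calder\'on--Mihlin theorem identifies $W^{r,p}(\mathbb{R}^N)$ (up to equivalent norms) with the Bessel potential space $L^{r,p}(\mathbb{R}^N):=J_r(L^p(\mathbb{R}^N))$, where $J_r=(I-\Delta)^{-r/2}$; similarly $H^s(\mathbb{R}^N)=J_s(L^2(\mathbb{R}^N))$. Given $u\in W^{r,p}(\Omega)$, I would write $Eu=J_r g$ with $g\in L^p(\mathbb{R}^N)$ and $\|g\|_{L^p(\mathbb{R}^N)}\le C\|u\|_{W^{r,p}(\Omega)}$. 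Since $Eu=J_s(J_{r-s}g)$ by the semigroup property, establishing $Eu\in H^s(\mathbb{R}^N)$ reduces to proving $J_{r-s}g\in L^2(\mathbb{R}^N)$. Observe that $r-s=N(1/p-1/2)>0$ because $p<2$, so $J_{r-s}$ is a genuine smoothing operator.

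The key analytic input is the $L^p\to L^q$ continuity of Bessel potentials: $J_\alpha:L^p(\mathbb{R}^N)\to L^q(\mathbb{R}^N)$ is bounded for $1<p\le q<\infty$ whenever $1/p-1/q\le\alpha/N$. Applied with $\alpha=r-s$ and $q=2$, the critical equality $1/p-1/2=(r-s)/N$ is exactly the stated hypothesis $s=N(1/2-1/p)+r$. This yields
$$
\|u\|_{H^s(\Omega)}\le\|Eu\|_{H^s(\mathbb{R}^N)}\le C\|J_{r-s}g\|_{L^2(\mathbb{R}^N)}\le C\|g\|_{L^p(\mathbb{R}^N)}\le C\|u\|_{W^{r,p}(\Omega)},
$$
which is the sought continuous embedding.

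The main obstacle I anticipate is the identification $W^{r,p}(\mathbb{R}^N)\simeq L^{r,p}(\mathbb{R}^N)$ when $r$ is a non-negative integer, which rests on Calder\'on--Zygmund estimates for Riesz transforms and crucially exploits $1<p<\infty$; without it the reduction to a statement purely about $J_{r-s}$ is not available. The $L^p\to L^q$ boundedness of $J_\alpha$ itself is comparatively straightforward: one controls the Bessel kernel pointwise by the Riesz kernel $c_{N,\alpha}|x|^{\alpha-N}$ near the origin, notes its exponential decay at infinity, and invokes the Hardy--Littlewood--Sobolev inequality. These ingredients are precisely those assembled in \cite[Theorem 7.58]{adams}, so in practice the lemma is simply cited; the plan above records the architecture of that classical argument rather than any new computation.
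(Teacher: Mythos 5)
The paper offers no proof of this lemma --- it is quoted verbatim from Adams --- so the only question is whether your reconstruction of the classical argument is sound. The architecture (extension to $\mathbb{R}^N$, factorization through Bessel potentials $J_\alpha=(I-\Delta)^{-\alpha/2}$, and the $L^p\to L^2$ bound for $J_{r-s}$ at the critical exponent via Hardy--Littlewood--Sobolev) is the right one, and the exponent bookkeeping $r-s=N(1/p-1/2)>0$ is correct.

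There is, however, one step whose justification as written is false and which matters for the way the lemma is used in the paper. You invoke a Calder\'on--Mikhlin ``identification'' $W^{r,p}(\mathbb{R}^N)\simeq L^{r,p}(\mathbb{R}^N)$ for all $r>0$. That identification holds only for integer $r$ (Calder\'on's theorem); for non-integer $r$ and $p\neq 2$ the Sobolev--Slobodeckij space $W^{r,p}=B^r_{p,p}$ and the Bessel potential space $L^{r,p}=F^r_{p,2}$ are genuinely different. Since the paper applies the lemma with $r=7/10$ and $r=4/5$, you cannot wave this away. Your closing paragraph also has the difficulty exactly backwards: you flag the \emph{integer} case as the obstacle, whereas that is precisely the case Calder\'on's theorem settles; the fractional case is where the identification breaks down. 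The repair is fortunately cheap and uses the standing hypothesis $p<2$: for $1<p\le 2$ one has the one-sided embedding $B^r_{p,p}=F^r_{p,p}\hookrightarrow F^r_{p,2}=L^{r,p}$ (because $\ell^p\hookrightarrow\ell^2$ in the Littlewood--Paley characterization), and this one-sided inclusion is all your argument ever uses when you write $Eu=J_r g$ with $\|g\|_{L^p}\le C\|u\|_{W^{r,p}}$. With that substitution the rest of your chain of inequalities goes through, and indeed this is essentially how the critical embedding $B^r_{p,p}\hookrightarrow B^s_{2,2}=H^s$ is obtained in Adams. Alternatively one can bypass Bessel potentials altogether via the Besov-scale Sobolev embedding $B^r_{p,p}\hookrightarrow B^s_{2,p}\hookrightarrow B^s_{2,2}$ for $p\le 2$, which is closer to the cited Theorem 7.58.
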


\begin{lemma} \label{l7} (\cite[Th\'eor\`{e}me 9.6, p. 49]{lions-magenes})
Let $p_1,p_2,p\ge 1$ and $s_1,s_2,s>0$ such that
$$
s=(1-\theta)s_1 + \theta s_2 \ \mbox{ and }\  \frac1p=\frac{1-\theta}{p_1}+\frac{ \theta}{p_2},\mbox{ with }\theta\in[0,1].
$$ 
Then,
\begin{equation*}\label{l7-e1}
L^{p_1}(H^{s_1}) \cap L^{p_2}(H^{s_2}) \hookrightarrow L^p(H^s).
\end{equation*}
\end{lemma}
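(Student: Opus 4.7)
The plan is to prove the embedding by combining the classical pointwise-in-time interpolation inequality for the Hilbert scale $\{H^s(\Omega)\}$ with Hölder's inequality in the time variable. Given $u\in L^{p_1}(0,T;H^{s_1})\cap L^{p_2}(0,T;H^{s_2})$, the first step is to show that for a.e.~$t\in(0,T)$,
\[
\|u(t)\|_{H^s}\;\le\; C\,\|u(t)\|_{H^{s_1}}^{\,1-\theta}\,\|u(t)\|_{H^{s_2}}^{\,\theta},
\]
where $s=(1-\theta)s_1+\theta s_2$. This is the standard interpolation inequality between fractional Sobolev spaces on $\Omega$, and it amounts to identifying $H^s(\Omega)$ with the interpolation space $[H^{s_1}(\Omega),H^{s_2}(\Omega)]_\theta$.

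Second, I would raise this pointwise bound to the $p$-th power, integrate in $t$, and apply Hölder's inequality with exponents $a=p_1/(p(1-\theta))$ and $b=p_2/(p\theta)$. The hypothesis $1/p=(1-\theta)/p_1+\theta/p_2$ guarantees conjugacy, since
\[
\frac{1}{a}+\frac{1}{b}\;=\;p\left(\frac{1-\theta}{p_1}+\frac{\theta}{p_2}\right)\;=\;1.
\]
This yields
\[
\|u\|_{L^p(0,T;H^s)}\;\le\;C\,\|u\|_{L^{p_1}(0,T;H^{s_1})}^{\,1-\theta}\,\|u\|_{L^{p_2}(0,T;H^{s_2})}^{\,\theta},
\]
which is exactly the continuous embedding claimed. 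The endpoint cases $\theta=0$ and $\theta=1$ reduce to $L^{p_i}(H^{s_i})\hookrightarrow L^{p_i}(H^{s_i})$ and are trivial; otherwise the computation above applies verbatim.

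The main obstacle is the first step, namely the pointwise spatial interpolation inequality itself, which carries all the analytic content. On a smooth bounded domain $\Omega$, the space $H^s(\Omega)$ can be characterized either intrinsically via Slobodeckij seminorms, by restriction of $H^s(\mathbb{R}^N)$, or as the domain of a fractional power of a self-adjoint elliptic operator (e.g.~$(I-\Delta)^{s/2}$ with Neumann conditions). Establishing the interpolation inequality can then be done either abstractly, by verifying through the K-method or the complex method that $H^s(\Omega)=[H^{s_1}(\Omega),H^{s_2}(\Omega)]_\theta$ and invoking the interpolation property of bounded operators, or concretely by extending $u$ to $\mathbb{R}^N$, writing $\|u\|_{H^r(\mathbb{R}^N)}^2=\int_{\mathbb{R}^N}(1+|\xi|^2)^r|\widehat u(\xi)|^2\,d\xi$, splitting the weight as $(1+|\xi|^2)^s=(1+|\xi|^2)^{s_1(1-\theta)}(1+|\xi|^2)^{s_2\theta}$ and applying Hölder in $\xi$. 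Either route recovers the pointwise estimate and thus completes the proof. This spatial interpolation inequality is exactly the content of Théorème 9.6 of Lions--Magenes cited in the statement.
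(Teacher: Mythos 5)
Your proposal is correct. The paper gives no proof of this lemma -- it simply cites Lions--Magenes for the spatial interpolation identity $[H^{s_1}(\Omega),H^{s_2}(\Omega)]_\theta=H^{(1-\theta)s_1+\theta s_2}(\Omega)$ -- and your argument (pointwise-in-time interpolation inequality in the Hilbert scale followed by H\"older in $t$ with the conjugate exponents $p_1/(p(1-\theta))$ and $p_2/(p\theta)$, which also handles the case $p_1=\infty$ used in the paper) is exactly the standard derivation the citation is meant to encapsulate.
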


\subsection{Proof of Theorem \ref{regularity}}\label{ss-p}

\begin{proof} 

The proof is carried out into four steps:

\vspace{0.3cm}
 \underline{Step 1:}\hspace{0.2cm} $v\in X_{20/7}$
\vspace{0.3cm}

\noindent From Theorem \ref{weak_solution}, we know that there exists a weak solution $(u,v)$ of system (\ref{eq1})-(\ref{eq3}) in the sense 
of Definition \ref{weak}. Thus, in particular $v\in L^{10}(Q)$ and then
$fv\chi_{_{\Omega_c}}\in L^{20/7}(Q)$, which implies, using  hypothesis (\ref{hyp-regul}), that $u+fv\chi_{_{\Omega_c}}\in L^{20/7}(Q)$. Then, 
applying Lemma \ref{feireisl} (for $p={20}/{7}$) to equation (\ref{eq1})$_2$, we have $v\in X_{20/7}$.
In particular, using Sobolev embeddings we have

\begin{equation}\label{ecu-2-a}
v\in L^\infty(Q),
\end{equation}
\begin{equation}\label{ecu-2}
\nabla v\in L^\infty(L^4)\cap L^{20/7}(W^{1,20/7})\hookrightarrow L^\infty(L^4)\cap L^{20/7}(L^{60}).
\end{equation}
Embedding (\ref{l5-e1}) for $p_1=\infty$, $q_1=4$, $p_2=20/7$ and $q_2=60$  (see Lemma \ref{l3}) implies $p=q={20}/{3}$ hence
\begin{equation}\label{ecu-2-b}
\nabla v\in L^\infty(L^4)\cap L^{20/7}(L^{60})\hookrightarrow L^{20/3}(Q).
\end{equation}

\vspace{0.3cm}
\underline{Step 2:}\hspace{0.3cm} $u\in L^\infty(L^2)\cap L^2(H^1)$.
\vspace{0.3cm}

Starting from $u\in L^{20/7}(Q) \cap L^{5/4}(W^{1,5/4})$ and $v \in X_{20/7}$, we improve the regularity of $u$ by a bootstrapping argument  in eigth sub-steps:

{\sl i) $u \in X_{20/19}$:}

\noindent 
Using that $(u,\Delta v)\in L^{20/7}(Q)\times L^{20/7}(Q)$ (hence $u\Delta v\in L^{10/7}(Q)$), and $(\nabla u,\nabla v)\in L^{5/4}(Q)\times L^{20/3}(Q)$ (hence $\nabla u\cdot\nabla v\in L^{20/19}(Q)$) we have
$$
\nabla\cdot(u\nabla v)=u\Delta v+\nabla u\cdot\nabla v\in L^{20/19}(Q).
$$
Thus, applying Lemma \ref{feireisl} (for $p=20/19$) to equation (\ref{eq1})$_1$ we obtain that $u\in X_{20/19}$.

\bigskip

{\sl ii) $u \in X_{10/9}$:}
Since $u\in X_{20/19}$, then by Sobolev embeddings
\begin{equation}\label{ecu-2-1}
\nabla u\in L^{20/19}(W^{1,20/19})\hookrightarrow L^{20/19}(L^{60/37}).
\end{equation}
Moreover, 
 using (\ref{l5-e1}) in (\ref{ecu-2}) (for $p_1=\infty$, $q_1=4$, $p_2=20/7$, $q_2=60$ and $p=20$, hence $q={60}/{13}$), we obtain
\begin{equation}\label{ecu-2-3}
\nabla v\in L^{\infty}(L^4)\cap L^{20}(L^{60/13}).
\end{equation}
Thus, from (\ref{ecu-2-1}) and (\ref{ecu-2-3}) we have 
$\nabla u\cdot\nabla v\in L^{20/19}(L^{15/13})\cap L^1(L^{6/5})$. Then, 
owing to (\ref{l5-e1}) applied to $(p_1,q_1)=\left({20}/{19},{15}/{13}\right)$ and $(p_2,q_2)=\left(1,{6}/{5}\right)$
implies that $p=q=10/9$, hence
\begin{equation*}\label{e-n1}
\nabla u\cdot\nabla v\in L^{10/9}(Q).
\end{equation*} 
Since $u\Delta v\in L^{10/7}(Q)$, we have $\nabla\cdot(u\nabla v)\in L^{10/9}(Q)$. Then, applying Lemma \ref{feireisl} (for $p=10/9$)
to  (\ref{eq1})$_1$ we deduce that $u\in X_{10/9}$. 

\bigskip

{\sl iii) $u \in X_{20/17}$:}
Since $u\in X_{10/9}$, then
\begin{equation}\label{ecu-2-4}
\nabla u\in L^{10/9}(W^{1,10/9})\hookrightarrow L^{10/9}(L^{30/17}).
\end{equation}
Now, using (\ref{l5-e1}) in (\ref{ecu-2}) (for $p_1=\infty$, $q_1=4$, $p_2=20/7$, $q_2=60$ and $p=10$, hence $q={60}/{11}$), we obtain
\begin{equation*}\label{nn1}
\nabla v\in L^\infty(L^4)\cap L^{10}(L^{60/11}),
\end{equation*} 
which jointly to (\ref{ecu-2-4}) implies 
$\nabla u\cdot\nabla v\in L^{10/9}(L^{60/49})\cap L^1(L^{4/3})$. Then  using (\ref{l5-e1}) with $(p_1,q_1)=\left({10}/{9},{60}/{49}\right)$, $(p_2,q_2)=\left(1,{4}/{3}\right)$
implies that $p=q=20/17$, hence
\begin{equation*}\label{e-n2}
\nabla u\cdot\nabla v\in L^{20/17}(Q).
\end{equation*}
Since 
$u\Delta v\in L^{10/7}(Q)$, we have $\nabla\cdot(u\nabla v)\in L^{20/17}(Q)$.  Then, applying Lemma \ref{feireisl} (for $p=20/17$) to  (\ref{eq1})$_1$ we deduce
that $u\in X_{20/17}$. 

\bigskip

{\sl iv) $u \in X_{5/4}$:} Since $u\in X_{20/7}$ then
$$
\nabla u\in L^{20/17}(W^{1,20/17})\hookrightarrow L^{20/17}(L^{60/31}),
$$
and,  from (\ref{ecu-2-b}), $\nabla v\in L^\infty(L^4)\cap L^{20/3}(Q)$, 
 then $\nabla u\cdot\nabla v\in L^{20/17}(L^{30/23})\cap L^1(L^{3/2})$, which thanks to (\ref{l5-e1}) applied to $(p_1,q_1)=\left({20}/{17},{30}/{23}\right)$, $(p_2,q_2)=\left(1,{3}/{2}\right)$
implies $p=q=5/4$ hence
 \begin{equation*}\label{e-n3}
 \nabla u\cdot\nabla v\in L^{5/4}(Q).
 \end{equation*}
 Since  $u\Delta v\in L^{10/7}(Q)$, we obtain that $\nabla\cdot(u\nabla v)\in L^{5/4}(Q)$ and, applying Lemma \ref{feireisl} (for $p=5/4$) to equation (\ref{eq1})$_1$ we deduce 
$u\in X_{5/4}$.

\bigskip

{\sl v) $u \in X_{4/3}$:} Using that $u\in X_{5/4}$, then
\begin{equation}\label{ecu-2-5}
\nabla u\in L^{5/4}(W^{1,5/4})\hookrightarrow L^{5/4}(L^{15/7}).
\end{equation}
 Using (\ref{l5-e1}) in (\ref{ecu-2}) (for $p_1=\infty$, $q_1=4$, $p_2=20/7$, $q_2=60$ and $p=5$, hence $q={60}/{7}$), we obtain
\begin{equation*}\label{nn3}
\nabla v\in L^\infty(L^4)\cap L^5(L^{60/7});
\end{equation*} 
then from the latter regularity and (\ref{ecu-2-5}) we have
$\nabla u\cdot\nabla v\in L^{5/4}(L^{60/43})\cap L^1(L^{12/7})$,
which thanks to (\ref{l5-e1}) applied to $(p_1,q_1)=\left({5}/{4},{60}/{43}\right)$, $(p_2,q_2)=\left(1,2\right)$
implies $p=q=4/3$, hence
\begin{equation*}\label{e-n4}
\nabla u\cdot\nabla v\in L^{4/3}(Q).
\end{equation*} 
Since $u\Delta v\in L^{10/7}(Q)$, we obtain
$\nabla\cdot(u\nabla v)\in L^{4/3}(Q)$. Then, applying Lemma \ref{feireisl} to equation (\ref{eq1})$_1$ we have $u\in X_{4/3}$.

\bigskip

{\sl vi) $u \in X_{10/7}$:} Since $u\in X_{4/3}$, then
$$
\nabla u\in L^{4/3}(W^{1,4/3})\hookrightarrow L^{4/3}(L^{12/5}),
$$
again using (\ref{l5-e1}) in (\ref{ecu-2}) (for $p_1=\infty$, $q_1=4$, $p_2=20/7$, $q_2=60$ and $p=4$, hence $q=12$), we obtain
\begin{equation*}\label{nn4}
\nabla v\in L^\infty(L^4)\cap L^4(L^{12})
\end{equation*} 
and $
\nabla u\cdot\nabla v\in L^{4/3}(L^{3/2})\cap L^1(L^2)$,
which thanks to (\ref{l5-e1}) applied to $(p_1,q_1)=\left({4}/{3},{3}/{2}\right)$, $(p_2,q_2)=\left(1,2\right)$
implies $p=q=10/7$, hence
\begin{equation*}\label{e-n5}
\nabla u\cdot\nabla v\in L^{10/7}(Q).
\end{equation*}
 Since $u\Delta v\in L^{10/7}(Q)$, we obtain $\nabla\cdot(u\nabla v)\in L^{10/7}(Q)$, and applying Lemma \ref{feireisl} (for $p=10/7$) to equation (\ref{eq1})$_1$ we have $u\in X_{10/7}$. 

\bigskip

{\sl vii) $u \in X_{20/13}$:} Since $u\in X_{10/7}$, then  
\begin{equation}\label{ecu-2-6}
\left\{
\begin{array}{l}
u\in L^\infty(W^{3/5,10/7})\cap L^{10/7}(W^{2,10/7})\hookrightarrow L^\infty(L^2)\cap L^{10/7}(L^{30})\hookrightarrow L^{10/3}(Q),\\
\nabla u\in L^{10/7}(W^{1,10/7})\hookrightarrow L^{10/7}(L^{30/11}).
\end{array}
\right.
\end{equation}
 This time, we use (\ref{l5-e1}) in (\ref{ecu-2}) (for  $p_1=\infty$, $q_1=4$, $p_2=20/7$, $q_2=60$ and $p=10/3$, hence $q=20$), we obtain
\begin{equation*}\label{nn5}
\nabla v\in L^\infty(L^4)\cap L^{10/3}(L^{20}),
\end{equation*} 
the latter regularity, (\ref{ecu-2-6}) 
and the fact that $\Delta v\in L^{20/7}(Q)$ implies
$$
u\Delta v\in L^{20/13}(Q)\ \mbox{ and }\ \nabla u\cdot\nabla v\in L^{10/7}(L^{60/37})\cap L^1(L^{12/5}).
$$
From (\ref{l5-e1}) applied to $(p_1,q_1)=\left({10}/{7},{60}/{37}\right)$, $(p_2,q_2)=\left(1,{12}/{5}\right)$ one has $p=q=20/13$ hence
\begin{equation*}\label{e-n6}
\nabla u\cdot\nabla v\in L^{20/13}(Q).
\end{equation*}
Then, applying Lemma \ref{feireisl} (for $p=20/13$) to equation (\ref{eq1})$_1$ we have 
$u\in X_{20/13}$.

\bigskip

{\sl viii) $u \in L^\infty(L^2)\cap L^2(H^1)$:
From Lemma \ref{l6},  we know that $W^{7/10,20/13}(\Omega)\hookrightarrow H^{1/4}(\Omega)$ and $W^{2,20/13}(\Omega)\hookrightarrow H^{31/20}(\Omega)$. Therefore, from $u\in X_{20/13}$
we can deduce 
\begin{equation*}\label{ecu-2-8}
u\in L^\infty(H^{1/4})\cap L^{20/13}(H^{31/20}).
\end{equation*}

Moreover, from Lemma \ref{l7} for $(p_1,s_1)=\left(\infty,{1}/{4}\right)$, $(p_2,s_2)=\left({20}/{13},{31}/{20}\right)$ we have
that $u\in L^2(H^{5/4})\hookrightarrow L^2(H^1)$.} Therefore, from the latter regularity and (\ref{ecu-2-6})$_1$
we deduce 
\begin{equation}\label{ecu-2-9}
u\in L^\infty(L^2)\cap L^2(H^1)\hookrightarrow L^{10/3}(Q).
\end{equation}

\vspace{0.3cm}
\underline{Step 3:}\hspace{0.3cm} $(u,v)\in X_{5/3}\times X_{10/3}$, $u\in L^5(Q)$  and $\nabla u\in L^{20/9}(Q)$.
\vspace{0.3cm}

\noindent From  (\ref{ecu-2-a}), (\ref{ecu-2-9}) and the fact that $f\in L^4(Q)$ we obtain  $u+fv\in L^{10/3}(Q)$. Then applying Lemma \ref{feireisl} (for $p=10/3$) to
equation (\ref{eq1})$_2$ we have that $v\in X_{10/3}$.
In particular, from Lemma \ref{l4} (for $p_1=p_2=10/3$, $q_1=6$, $r=1$ and $p=q=10$)
we obtain
$\nabla v\in L^\infty(L^6)\cap L^{10/3}(W^{1,10/3})\hookrightarrow L^{10}(Q).$ Then, using that $(u,\Delta v)\in L^{10/3}(Q)\times L^{10/3}(Q)$, $\nabla v\in L^{10}(Q)$ and
taking into account that $\nabla u\in L^2(Q)$ we have 
$$
\nabla\cdot(u\nabla v)=u\Delta v+\nabla u\cdot\nabla v\in L^{5/3}(Q).
$$
Thus, applying Lemma \ref{feireisl} (for $p=5/3$) to equation (\ref{eq1})$_1$ we obtain that
$ u\in X_{5/3}$. 
Moreover, 
from Sobolev embeddings and again  Lemma \ref{l4} (for $p_1=p_2=5/3$, $q_1=3$, $r=2$ and $p=q=5$)
 we have
\begin{equation}\label{ecu-8}
u\in L^\infty(L^3)\cap L^{5/3}(W^{2,5/3})\hookrightarrow L^5(Q).
\end{equation}

From Lemma \ref{l6} we have the embeddings
$W^{4/5,5/3}(\Omega)\hookrightarrow H^{1/2}(\Omega)$ and $W^{2,5/3}(\Omega)\hookrightarrow H^{17/10}(\Omega)$. Thus, since $u\in X_{5/3}$, one has
\begin{equation*}\label{ecu-9}
u\in L^\infty(H^{1/2})\cap L^{5/3}(H^{17/10}).
\end{equation*}

Moreover, from 
Lemma \ref{l7} (for $(p_1,s_1)=\left(\infty,{1}/{2}\right)$ and $(p_2,s_2)=\left({5}/{3},{17}/{10}\right)$),
we have 
$u\in L^{20/9}(H^{7/5})$, and in particular  $\nabla u\in L^{20/9}(H^{2/5})\hookrightarrow L^{20/9}(Q)$.

\vspace{0.3cm}
\underline{Step 4:}\hspace{0.3cm} $(u,v)\in X_{2}\times X_{4}$.
\vspace{0.3cm}

\noindent From (\ref{ecu-2-a}), (\ref{ecu-8}), and using  that $f\in L^4(Q_c)$, we have $u+fv\chi_{_{\Omega_c}}\in L^4(Q)$. Then, applying Lemma~\ref{feireisl} (for $p=4$) to equation
(\ref{eq1})$_2$ we deduce that $v\in X_4$ and satisfies the estimate
\begin{eqnarray}
\|v\|_{X_{4}}
&\le& C(\|u+fv\|_{L^{4}(Q)}+\|v_0\|_{W^{3/2,4}_{\bf n}})\le C(\|u\|_{L^{4}(Q)}+\|f\|_{L^4(Q)}\|v\|_{L^{\infty}(Q)}+\|v_0\|_{W^{3/2,4}_{\bf n}})\nonumber\\
&\le&C_0(\|u_0\|_{W^{4/5,5/3}},\|v_0\|_{W^{3/2,4}_{\bf n}},\|f\|_{L^4(Q)})\label{ecu-11}.
\end{eqnarray}
In particular, by Sobolev embeddings  and   Lemma \ref{l4} 
(for $p_1=p_2=4$, $q_1=12$, $r=1$ hence $p=q=20$) we have $\nabla v\in L^\infty(L^{12})\cap L^4(W^{1,4})\hookrightarrow L^{20}(Q)$.

Now, using that $(u,\Delta v)\in L^5(Q)\times L^4(Q)$ and $(\nabla u,\nabla v)\in L^{20/9}(Q)\times L^{20}(Q)$ we obtain
$$
\nabla\cdot(u\nabla v)=u\Delta v+\nabla u\cdot\nabla v\in L^2(Q).
$$
Therefore, applying Lemma \ref{feireisl} (for $p=2$) to equation (\ref{eq1})$_1$ we deduce that $u\in X_2$ and 
\begin{eqnarray}\label{ecu-12}
\|u\|_{X_2}&\le&C(\|u\|_{L^5(Q)}\|\Delta v\|_{L^4(Q)}+\|\nabla u\|_{L^{20/9}(Q)}\|\nabla v\|_{L^{20}(Q)}+\|u_0\|_{H^1})\nonumber\\
&\le&C_1(\|u_0\|_{H^1},\|v_0\|_{W^{3/2,4}_{\bf n}},\|f\|_{L^4(Q)}).
\end{eqnarray}
Finally, we observe that estimate (\ref{bound-strong}) follows from (\ref{ecu-11}) and (\ref{ecu-12}).
\end{proof}

\section{The Optimal Control Problem}
\label{sec:5}

In this section we establish the statement of the bilinear control problem. Following \cite{casas-1,casas-2}, we formulate the control problem in such a way
that any admissible state is a strong solution of (\ref{eq1})-(\ref{eq3}). Since there is no existence result of global in time strong solutions of (\ref{eq1})-(\ref{eq3}), we have to choose
a suitable objective functional.

We suppose that 
\begin{equation}\label{F-def}
\mathcal{F}\subset L^4(Q_c):=L^4(0,T;L^4(\Omega_c)) \quad \mbox{ is a 
nonempty and convex set,} 
\end{equation}
where $\Omega_c\subset\Omega$ is the control domain. We consider data 
$u_0\in H^1(\Omega)$, $v_0\in W^{3/2,4}_{{\bf n}}(\Omega)$ with $u_0\ge0$ and $v_0\ge 0$ in $\Omega$, and  the function $f\in\mathcal{F}$ describing the bilinear control acting on the  $v$-equation. 

Now, we define the following constrained minimization problem
related to  system (\ref{eq1})-(\ref{eq3}):
\begin{equation}\label{func}
\left\{
\begin{array}{l}
\mbox{Find  }(u,v,f)\in X_2\times X_4\times\mathcal{F}\mbox{ such that the functional }\\
J(u,v,f):=\displaystyle\frac{7\alpha_u}{20}\int_0^T\|u(t)-u_d(t)\|^{20/7}_{L^{20/7}{(\Omega)}}dt+\displaystyle\frac{\alpha_v}{2}\int_0^T\|v(t)-v_d(t)\|^2_{L^2{(\Omega)}}dt
\\
\hspace{2cm}+\displaystyle\frac{\alpha_f}{4}\int_0^T\|f(t)\|^4_{L^4(\Omega_c)}dt\\
\mbox{ is minimized, subject to $(u,v,f)$ satisfies the PDE system (\ref{eq1})-(\ref{eq3}).}
\end{array}
\right.
\end{equation}

Here $(u_d,\, v_d)\in L^{26/7}(Q)\times L^2(Q)$ represent  the desires states (see the beginning of the proof of Theorem \ref{regularity_lagrange} below to justify the regularity
required for 
$u_d\in L^{26/7}(Q)$) and the real numbers $\alpha_u$, $\alpha_v$ and $\alpha_f$ measure the cost of the states and control,
respectively. These numbers satisfy 
$$\alpha_u>0\quad\hbox{and}\quad\alpha_v, \alpha_f\ge0.$$
The  admissible set for the optimal control problem (\ref{func}) is defined by
\begin{equation*}\label{adm}
\mathcal{S}_{ad}=\{s=(u,v,f)\in X_2\times X_4\times\mathcal{F}\,:\, s \mbox{ is a strong solution of  (\ref{eq1})-(\ref{eq3}) in }(0,T)\}.
\end{equation*}
The functional $J$ defined in (\ref{func}) describes the deviation of the cell density $u$ 
and the chemical concentration $v$
from a desired cell density $u_d$ and chemical concentration $v_d$ respectively, plus  the cost of the control measured in the $L^4$-norm.
We also observe that if $(u,v)$ is a weak solution of (\ref{eq1})-(\ref{eq3}) in $(0,T)$ such that $J(u,v,f)<+\infty$, then by Theorem \ref{regularity}, $(u,v)$ is a strong
solution of (\ref{eq1})-(\ref{eq3}) in $(0,T)$. In what follows, we will assume the hypothesis
\begin{equation}\label{hip}
\mathcal{S}_{ad}\ne \emptyset.
\end{equation}
\begin{remark}\label{coment_functional}
The reason for choosing the first term of the objective functional in the $L^{20/7}$-norm is that any weak solution of (\ref{eq1})-(\ref{eq3}) satisfying 
$J(u,v,f)<+\infty$ satisfies that $u \in L^{20/7}(Q)$ and therefore, in virtue of Theorem \ref{regularity}, let us to state that $(u,v)$ is the unique solution of (\ref{eq1})-(\ref{eq3}) 
in the sense of Definition \ref{regul}. Thus, we reduce the admissible states of problem (\ref{func}) to the
strong solutions of (\ref{eq1})-(\ref{eq3}). 
With this formulation we are going to prove the existence of a global  optimal solution and derive the optimality conditions associated to any local optimal solution.
\end{remark}
\subsection{Existence of Global Optimal Solution}
\begin{definition}\label{optimal_solution}
An element $(\tilde{u},\tilde{v},\tilde{f})\in\mathcal{S}_{ad}$ will be called a global optimal solution of problem (\ref{func}) if
\begin{equation}\label{optimal}
J(\tilde{u},\tilde{v},\tilde{f})=\min_{(u,v,f)\in\mathcal{S}_{ad}}J(u,v,f).
\end{equation}

\end{definition}
\begin{theorem}\label{existence_solution}
Let $u_0\in H^1(\Omega)$ and $v_0\in W^{3/2,4}_{{\bf n}}(\Omega)$ with $u_0\ge0$ and $v_0\ge 0$ in $\Omega$. We assume that either $\alpha_f>0$ or $\mathcal{F}$ is bounded in
$L^4(Q_c)$ and hypothesis (\ref{hip}), then the bilinear optimal control problem (\ref{func}) has at least one global optimal solution $(\tilde{u},\tilde{v},\tilde{f})\in\mathcal{S}_{ad}$.

\end{theorem}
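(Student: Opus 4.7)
The plan is to apply the direct method of the calculus of variations. Since by hypothesis (\ref{hip}) the admissible set $\mathcal{S}_{ad}$ is nonempty and $J\ge 0$, the infimum $m:=\inf_{\mathcal{S}_{ad}}J$ is finite. I would choose a minimizing sequence $\{s_n\}=\{(u_n,v_n,f_n)\}\subset\mathcal{S}_{ad}$ with $J(s_n)\to m$, and first extract a uniform bound for $\{f_n\}$ in $L^4(Q_c)$: if $\alpha_f>0$, this bound comes directly from the $\tfrac{\alpha_f}{4}\int_0^T\|f\|_{L^4}^4$ term, while if $\mathcal{F}$ is bounded in $L^4(Q_c)$ the bound is automatic. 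Combined with the fixed initial data $(u_0,v_0)\in H^1\times W^{3/2,4}_{\bf n}$ and the fact that each $s_n$ is a strong solution, estimate (\ref{bound-strong}) of Theorem \ref{regularity} then yields uniform bounds
\begin{equation*}
\|u_n\|_{X_2}+\|v_n\|_{X_4}\le K,
\end{equation*}
where $K$ depends only on $\|u_0\|_{H^1}$, $\|v_0\|_{W^{3/2,4}_{\bf n}}$ and $\sup_n\|f_n\|_{L^4(Q)}$.

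Next I would extract (up to subsequences, not relabelled) weak/weak-$\ast$ limits: $f_n\rightharpoonup \tilde f$ weakly in $L^4(Q_c)$, $u_n\rightharpoonup\tilde u$ weakly in $X_2$, and $v_n\rightharpoonup\tilde v$ weakly in $X_4$. Since $\mathcal{F}$ is convex (and assumed closed, hence weakly closed by Mazur's theorem), we have $\tilde f\in\mathcal{F}$. The positivity $\tilde u,\tilde v\ge 0$ is preserved by weak convergence. Because $\{u_n\}$ is bounded in $L^\infty(H^1)\cap L^2(H^2)$ with $\partial_t u_n$ bounded in $L^2(Q)$, and analogously for $v_n$, the Aubin--Lions compactness lemma provides strong convergence $u_n\to\tilde u$ and $v_n\to\tilde v$ in, say, $L^2(Q)$ and in $L^2(H^1)$ respectively (in fact in stronger norms), which is enough to pass to the limit in the bilinear terms $u_n\nabla v_n$ and $f_nv_n\chi_{\Omega_c}$: the product of a strongly convergent sequence and a weakly convergent one converges weakly in an appropriate Lebesgue space. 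Passing to the limit in the PDE system (\ref{eq1})--(\ref{eq3}) shows that $(\tilde u,\tilde v,\tilde f)$ is a strong solution in the sense of Definition \ref{regul}, hence $\tilde s:=(\tilde u,\tilde v,\tilde f)\in\mathcal{S}_{ad}$.

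Finally, since each of the three terms in $J$ is convex and continuous on the respective Lebesgue spaces ($L^{20/7}(Q)$, $L^2(Q)$ and $L^4(Q_c)$), the functional is weakly lower semicontinuous. Therefore
\begin{equation*}
J(\tilde s)\le \liminf_{n\to\infty} J(s_n)=m,
\end{equation*}
and combined with $\tilde s\in\mathcal{S}_{ad}$ this gives $J(\tilde s)=m$, so $\tilde s$ is a global optimal solution. The main obstacle I anticipate is ensuring enough compactness to pass to the limit in the nonlinear term $\nabla\cdot(u_n\nabla v_n)$; this is precisely where the strong-solution estimate (\ref{bound-strong}) is essential, since the $X_2\times X_4$ regularity gives both strong compactness of $u_n$ in $L^p(Q)$ for suitable $p$ and weak compactness of $\nabla v_n$ in, e.g., $L^{20}(Q)$, so that the identification $u_n\nabla v_n\rightharpoonup \tilde u\nabla \tilde v$ is straightforward in a Lebesgue space compatible with the test function class.
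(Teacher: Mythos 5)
Your proposal is correct and follows essentially the same route as the paper's proof: a minimizing sequence, the uniform $L^4(Q_c)$ bound on the controls from $\alpha_f>0$ or the boundedness of $\mathcal{F}$, the uniform $X_2\times X_4$ bound from estimate (\ref{bound-strong}), weak/weak-$\ast$ compactness together with Aubin--Lions to pass to the limit in $\nabla\cdot(u_n\nabla v_n)$ and $f_nv_n\chi_{_{\Omega_c}}$, and weak lower semicontinuity of $J$. The only cosmetic difference is that you make explicit the (weak) closedness of $\mathcal{F}$ and the lower semicontinuity via convexity, which the paper uses implicitly.
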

\begin{proof}
From  hypothesis (\ref{hip}) $\mathcal{S}_{ad}\neq\emptyset$. Let  $\{s_m\}_{m\in\mathbb{N}}:=\{(u_m,v_m,f_m)\}_{m\in\mathbb{N}}\subset\mathcal{S}_{ad}$ be a minimizing sequence
of $J$, that is, $\displaystyle\lim_{m\rightarrow+\infty}J(s_m)=\inf_{s\in\mathcal{S}_{ad}}J(s)$. Then, by definition of $\mathcal{S}_{ad}$, for each $m\in\mathbb{N}$, $s_m$ satisfies  system
(\ref{eq1}) a.e. $(t,x)\in Q$.

From the definition of $J$ and the assumption $\alpha_f>0$  or  $\mathcal{F}$ is bounded in $L^4(Q_c)$, it follows that 
\begin{equation}\label{bound_F}
\{f_m\}_{m\in\mathbb{N}}\mbox{ is bounded in }L^{4}(Q_c)
\end{equation}
and 
$$
\{u_m\}_{m\in\mathbb{N}}\mbox{ is bounded in }L^{20/7}(Q).
$$

From (\ref{bound-strong}) there exists a positive  constant $C$, independent of $m$, such that
\begin{equation}\label{bound_u_v}
\|u_m,v_m\|_{X_2\times X_4}\le C.
\end{equation}
Therefore, from (\ref{bound_F}), (\ref{bound_u_v}), and taking into account that $\mathcal{F}$ is a closed convex subset of $L^4(Q_c)$
(hence is weakly closed in $L^4(Q_c)$), we deduce that there exists 
$\tilde{s}=(\tilde{u},\tilde{v},\tilde{f})\in X_2\times X_4\times\mathcal{F}$
such that, for some subsequence of $\{s_m\}_{m\in\mathbb{N}}$, still denoted by  $\{s_m\}_{m\in\mathbb{N}}$, 
the following convergences hold, as $m\rightarrow+\infty$:
\begin{eqnarray}
u_m&\rightarrow&\tilde{u}\quad \mbox{weakly in }L^{2}(H^2)\mbox{ and weakly* in }L^\infty(H^1),\label{c2}\\
v_m&\rightarrow&\tilde{v} \quad\mbox{weakly in }L^4(W^{2,4})\mbox{ and weakly* in }L^\infty(W^{3/2,4}_{\bf n}),\label{c3}\\
\partial_tu_m&\rightarrow&\partial_t\tilde{u} \quad\mbox{weakly in } L^2(Q),\label{c4}\\
\partial_tv_m&\rightarrow&\partial_t\tilde{v} \quad\mbox{weakly in } L^4(Q),\label{c5}\\
f_m&\rightarrow&\tilde{f} \quad \mbox{weakly in } L^4(Q_c),\mbox{ and }\tilde{f}\in \mathcal{F}.\label{c6}
\end{eqnarray}
From (\ref{c2})-(\ref{c5}), the Aubin-Lions lemma (see \cite[Th\'eor\`eme 5.1, p. 58]{lions} and \cite[Corollary 4]{simon}) and using Sobolev embedding, we have
\begin{eqnarray}
u_m&\rightarrow&\tilde{u}\quad\mbox{strongly in }C([0,T];L^p)\cap L^2(W^{1,p})\ \ \forall p<6,\label{c7}\\
v_m&\rightarrow&\tilde{v}\quad\mbox{strongly in }C([0,T];L^q)\cap L^4(W^{1,q})\ \ \forall q<+\infty.\label{c8}
\end{eqnarray}
In particular,  we can control the limit of the nonlinear terms of (\ref{eq1}) as follows 
\begin{eqnarray}
\nabla\cdot(u_m\nabla v_m )&\rightarrow& \nabla\cdot(\tilde{u}\nabla\tilde{v}) \quad \hbox{ weakly in $ L^{20/7}(Q)$},\label{c8-0}\\
f_m v_m \chi_{_{\Omega_c}}&\rightarrow& \tilde{f}\,\tilde{v}\,\chi_{_{\Omega_c}} \quad \hbox{ weakly in $ L^{4}(Q)$}.\label{c8-1}
\end{eqnarray}
Moreover, from (\ref{c7}) and (\ref{c8}) we have that $(u_m(0),v_m(0))$ converges to 
$(\tilde{u}(0),\tilde{v}(0))$ in $L^p(\Omega)\times L^q(\Omega)$, and since $u_m(0)=u_0$, $v_m(0)=v_0$, we deduce that $\tilde{u}(0)=u_0$ 
and $\tilde{v}(0)=v_0$. Thus $\tilde{s}$ satisfies the initial conditions given in (\ref{eq2}). 
Therefore, considering the convergences (\ref{c2})-(\ref{c8-1}), we can pass to the limit in (\ref{eq1}) satisfied by $(u_m,v_m,f_m)$, as $m$ goes to $+\infty$,
and we conclude that $\tilde{s}=(\tilde{u},\tilde{v},\tilde{f})$ is also a solution of the system (\ref{eq1}) pointwisely, that is, $\tilde{s}\in\mathcal{S}_{ad}$. Therefore,
\begin{equation}\label{op20}
\lim_{m\rightarrow+\infty}J(s_m)=\inf_{s\in\mathcal{S}_{ad}}J(s)\le J(\tilde{s}).
\end{equation}
On the other hand, since $J$ is lower semicontinuous on $\mathcal{S}_{ad}$, we have $J(\tilde{s})\le \displaystyle\liminf_{m\rightarrow+\infty} J(s_m)$, which jointly to  
(\ref{op20}), implies (\ref{optimal}).
\end{proof}
\subsection{Optimality System Related to Local Optimal Solutions}

We will derive the first-order necessary optimality conditions for
a local optimal solution $(\tilde{u},\tilde{v},\tilde{f})$ of problem (\ref{func}), applying a Lagrange multipliers theorem.
We will base on a generic result given by Zowe et al  \cite{zowe} on the existence of Lagrange multipliers in Banach spaces.
In order to introduce the concepts and results given in \cite{zowe} we consider the following optimization problem
\begin{equation}\label{abs1}
\min_{x\in \mathbb{M}} J(x)\ \mbox{ subject to }G(x)\in \mathcal{N},
\end{equation}
where $J:\mathbb{X}\rightarrow\mathbb{R}$ is a functional, $G:\mathbb{X}\rightarrow \mathbb{Y}$ is an operator,
$\mathbb{X}$ and $\mathbb{Y}$ are Banach spaces,  $\mathbb{M}$ is a nonempty closed convex subset of $\mathbb{X}$ and $\mathcal{N}$ is a nonempty closed convex cone in $\mathbb{Y}$ with vertex at the origin.
The admissible set  for problem (\ref{abs1}) is defined by
$$
\mathcal{S}=\{x\in \mathbb{M}\,:\, G(x)\in \mathcal{N}\}.
$$
For a subset $A$ of $\mathbb{X}$ (or $\mathbb{Y}$), $A^+$ denotes its polar cone, that is
$$
A^+=\{\rho\in \mathbb{X}'\,:\, \langle \rho, a\rangle_{\mathbb{X}'}\ge 0,\ \forall a\in A\}.
$$
\begin{definition}(Lagrange multiplier)\label{abs2}
Let $\tilde{x}\in\mathcal{S}$ be a local  optimal solution for problem (\ref{abs1}). Suppose that $J$ and $G$ are Fr\'echet differentiable in $\tilde{x}$, with derivatives
$J'(\tilde{x})$ and $G'(\tilde{x})$, respectively. Then, any $\xi\in \mathbb{Y}'$ is called a Lagrange multiplier for (\ref{abs1}) at the point $\tilde{x}$ if
\begin{equation}\label{abs3}
\left\{
\begin{array}{l}
\xi\in \mathcal{N}^+,\\
\langle\xi, G(\tilde{x})\rangle_{\mathbb{Y}'}=0,\\
J'(\tilde{x})-\xi\circ G'(\tilde{x})\in \mathcal{C}(\tilde{x})^+,
\end{array}
\right.
\end{equation}
where $\mathcal{C}(\tilde{x})=\{\theta(x-\tilde{x})\,:\, x\in \mathbb{M},\, \theta\ge0\}$ is the conical hull of $\tilde{x}$ in $\mathbb{M}$. 
\end{definition}
\begin{definition}\label{abs4}
Let $\tilde{x}\in\mathcal{S}$ be a local optimal solution for problem (\ref{abs1}). We say that $\tilde{x}$ is a regular point if
\begin{equation*}\label{abs5}
G'(\tilde{x})[\mathcal{C}(\tilde{x})]-\mathcal{N}(G(\tilde{x}))=\mathbb{Y},
\end{equation*}
where $\mathcal{N}(G(\tilde{x}))=\{(\theta(n-G(\tilde{x}))\,:\, n\in \mathcal{N},\, \theta\ge0\}$ is the conical hull of $G(\tilde{x})$ in $\mathcal{N}$.  
\end{definition}
\begin{theorem}(\cite[Theorem 3.1]{zowe})\label{abs6}
Let $\tilde{x}\in\mathcal{S}$ be a local  optimal solution for problem (\ref{abs1}). If $\tilde{x}$ is a regular point, then the set of Lagrange multipliers for (\ref{abs1}) at $\tilde{x}$
is nonempty. 
\end{theorem}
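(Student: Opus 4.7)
The plan is to reproduce the classical Zowe--Kurcyusz argument, which has three main ingredients: a linearization at $\tilde{x}$, a nonlinear open-mapping / Ljusternik--Graves step to transfer linearized feasibility to actual feasibility, and a Hahn--Banach separation in $\mathbb{R}\times\mathbb{Y}$ to extract the multiplier $\xi$. First I would reformulate the local optimality condition in terms of linearized admissible directions in $\mathcal{C}(\tilde{x})$ that also satisfy $G'(\tilde{x})(h)\in\mathcal{N}(G(\tilde{x}))$, then produce $\xi$ by separating a suitable convex cone from the origin, and finally read off the three conditions of \eqref{abs3} by specializing the resulting inequality in an appropriate way.

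The crucial preliminary lemma I would establish is a conical Ljusternik--Graves theorem: under the regularity hypothesis $G'(\tilde{x})[\mathcal{C}(\tilde{x})]-\mathcal{N}(G(\tilde{x}))=\mathbb{Y}$, for every linearized feasible direction $h\in\mathcal{C}(\tilde{x})$ with $G'(\tilde{x})(h)\in\mathcal{N}(G(\tilde{x}))$ there exist $\delta>0$ and a curve $x:[0,\delta)\to\mathbb{M}$ with $x(0)=\tilde{x}$, $G(x(t))\in\mathcal{N}$ for all $t\in[0,\delta)$, and $\|x(t)-\tilde{x}-t\,h\|_{\mathbb{X}}=o(t)$ as $t\to 0^+$. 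The regularity assumption is precisely the Robinson-type constraint qualification needed to run a Newton / contraction-mapping iteration inside the closed convex cone $\mathbb{M}$: conical surjectivity of the linearization modulo $\mathcal{N}(G(\tilde{x}))$ allows one to construct a correction $x_{n+1}\in\mathbb{M}$ at each step that drives $G(x_{n+1})$ deeper into $\mathcal{N}$ with a geometric contraction rate.

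Given this lemma, Fr\'echet differentiability of $J$ together with local optimality of $\tilde{x}$ yields $\langle J'(\tilde{x}),h\rangle\geq 0$ on the convex cone $K=\{h\in\mathcal{C}(\tilde{x}):G'(\tilde{x})(h)\in\mathcal{N}(G(\tilde{x}))\}$. I would then apply Hahn--Banach in $\mathbb{R}\times\mathbb{Y}$ to separate the convex cone $C=\{(\langle J'(\tilde{x}),h\rangle+\alpha,\,-G'(\tilde{x})(h)+n):h\in\mathcal{C}(\tilde{x}),\,n\in\mathcal{N}(G(\tilde{x})),\,\alpha>0\}$ from the origin. Since $(0,0)\notin C$ by the linearized optimality above, and since the regularity assumption forces the $\mathbb{Y}$-projection of $C$ to fill $\mathbb{Y}$, a standard separation produces a nonzero $(\lambda,\xi)\in\mathbb{R}\times\mathbb{Y}'$ with $\lambda\,(\langle J'(\tilde{x}),h\rangle+\alpha)+\langle\xi,-G'(\tilde{x})(h)+n\rangle\geq 0$ for all admissible $(h,n,\alpha)$. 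The regularity hypothesis again rules out $\lambda=0$, so after normalizing $\lambda=1$ I recover the three conditions of \eqref{abs3}: setting $h=0$ and letting $n$ range over $\mathcal{N}(G(\tilde{x}))$ yields both $\xi\in\mathcal{N}^+$ and the complementarity $\langle\xi,G(\tilde{x})\rangle=0$ (by choosing $n=0$ and $n=2G(\tilde{x})$), while setting $n=0$ and varying $h$ gives $J'(\tilde{x})-\xi\circ G'(\tilde{x})\in\mathcal{C}(\tilde{x})^+$.

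The main obstacle will be the conical Ljusternik--Graves lemma. In the unconstrained case one simply invokes the Banach open-mapping theorem and runs a Newton iteration in a full neighbourhood, but here the iterates must stay inside the closed convex cone $\mathbb{M}$ and the image condition is the one-sided $G(x_n)\in\mathcal{N}$ rather than an equation $G(x_n)=0$. Handling both constraints simultaneously requires a careful metric-regularity argument for cone-valued maps (this is exactly the content of Zowe--Kurcyusz); once it is in hand, the Hahn--Banach separation and the specialization producing the three optimality conditions are essentially routine.
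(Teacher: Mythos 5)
The first thing to note is that the paper does not prove this statement at all: Theorem \ref{abs6} is quoted verbatim from Zowe--Kurcyusz \cite[Theorem 3.1]{zowe} and is used as a black box to obtain the multiplier in Theorem \ref{lagrange}. So there is no internal proof to compare with; what you have written is, in effect, an outline of the original Zowe--Kurcyusz argument. As an outline it names the right ingredients and in the right order: a generalized (conical) open-mapping/Ljusternik step made possible by the regularity condition $G'(\tilde{x})[\mathcal{C}(\tilde{x})]-\mathcal{N}(G(\tilde{x}))=\mathbb{Y}$, the resulting variational inequality $\langle J'(\tilde{x}),h\rangle\ge 0$ on the linearizing cone, and a separation argument in $\mathbb{R}\times\mathbb{Y}$ whose specialization ($h=0$ versus $n=0$) yields the three conditions in (\ref{abs3}); that bookkeeping at the end is done correctly.

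As a proof, however, there are two genuine gaps. First, the conical open-mapping/Ljusternik lemma, which you correctly identify as "the main obstacle," is not an auxiliary technicality: it is essentially the entire mathematical content of the cited theorem (in \cite{zowe} it is a separate generalized open-mapping theorem for the convex process $(x,n)\mapsto G'(\tilde{x})x-n$ restricted to $\mathcal{C}(\tilde{x})\times\mathcal{N}(G(\tilde{x}))$, proved by a Baire-category plus iteration argument that keeps the iterates in the convex set $\mathbb{M}$, which is not a cone), so asserting it leaves the proposal as a roadmap rather than a proof. Second, the separation step as written does not go through: knowing that $(0,0)\notin C$ and that the $\mathbb{Y}$-projection of $C$ is all of $\mathbb{Y}$ is not sufficient to separate $C$ from the origin in an infinite-dimensional Banach space (a proper convex set can be dense and admit no separating functional). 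What is needed is the quantitative conclusion of the generalized open-mapping theorem --- that bounded portions of $G'(\tilde{x})[\mathcal{C}(\tilde{x})]-\mathcal{N}(G(\tilde{x}))$ already cover a ball --- which shows that $C$ has nonempty interior, making Eidelheit/Hahn--Banach separation applicable, and the same estimate is what excludes the degenerate case $\lambda=0$. So the architecture is faithful to \cite{zowe}, but both places where the regularity hypothesis actually does work are invoked rather than established.
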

Now, we will reformulate the optimal control problem (\ref{func}) in the abstract setting  (\ref{abs1}).
We consider the following Banach spaces
\begin{equation*}\label{spaces_X_Y}
\mathbb{X}:=\mathcal{W}_u\times\mathcal{W}_v\times L^4(Q_c),\ 
\mathbb{Y}:=L^2(Q)\times L^4(Q)\times H^1(\Omega)\times W^{3/2,4}_{\bf n}(\Omega),
\end{equation*}
where
\begin{eqnarray}
\mathcal{W}_u&:=&\left\{u\in X_2\,:\,\dfrac{\partial u}{\partial{\bf n}}=0\ \mbox{ on }(0,T)\times\partial\Omega\right\},\label{space_state_u}\\
\mathcal{W}_v&:=&\left\{v\in X_4\,:\, \frac{\partial v}{\partial{\bf n}}=0\, \mbox{ on }(0,T)\times\partial\Omega\right\},\label{space_state_v}
\end{eqnarray} 
and the  operator $ {G}=(G_1,G_2,G_3,G_4):\mathbb{X}\rightarrow\mathbb{Y}$, where 
\begin{equation*}
G_1:\mathbb{X}\rightarrow L^2(Q),\ G_2:\mathbb{X}\rightarrow L^4(Q),\ G_3:\mathbb{X}\rightarrow H^1(\Omega),\ G_4:\mathbb{X}\rightarrow W^{3/2,4}_{\bf n}(\Omega)
\end{equation*}
are defined at each point $s=(u,v,f)\in\mathbb{X}$ by
\begin{equation*}\label{restriction}
\left\{
\begin{array}{l}
G_1(s)=\partial_tu-\Delta u-\nabla\cdot(u\nabla v),\vspace{0.1cm}\\
G_2(s)=\partial_tv-\Delta v+v-u-f\,v\,\chi_{_{\Omega_c}},\vspace{0.1cm}\\
G_3(s)=u(0)-u_0,\vspace{0.1cm}\\
G_4(s)=v(0)-v_0.
\end{array}
\right.
\end{equation*}
Thus, the optimal control problem (\ref{func}) is reformulated  as follows
\begin{equation}\label{problem-1}
\min_{s\in{\mathbb{M}}}J(s)\quad \mbox{ subject to }\quad {G}(s)={\bf 0},
\end{equation}
where
\begin{equation*}\label{M}
\mathbb{M}:=\mathcal{W}_u\times\mathcal{W}_v\times\mathcal{F}.
\end{equation*}
and $\mathcal{F}$ is defined in (\ref{F-def}).

We observe that ${\mathbb{M}}$ is a closed convex subset of $\mathbb{X}$, $\mathcal{N}=\{{\bf 0}\}$  and the set of admissible solutions is rewritten as
\begin{equation}\label{admi-1}
\mathcal{S}_{ad}=\{s=(u,v,f)\in{\mathbb{M}}\,:\, {G}(s)={\bf 0}\}.
\end{equation}
Concerning to the differentiability of the constraint operator ${G}$ and the functional $J$ we have the following results.

\begin{lemma}
The functional  $J:\mathbb{X}\rightarrow\mathbb{R}$ is Fr\'echet differentiable and the 
derivative of $J$ in $\tilde{s}=(\tilde{u},\tilde{v},\tilde{f})\in\mathbb{X}$ in the direction
 $r=(U,V,F)\in\mathbb{X}$ is
\begin{eqnarray}\label{C7}
J'(\tilde{s})[r]=\alpha_u\int_0^T\int_{\Omega}{\rm sgn}(\tilde{u}-u_d)|\tilde{u}-u_d|^{13/7}U+\alpha_v\int_0^T\int_{\Omega}(\tilde{v}-v_d)V
+\alpha_f\int_0^T\int_{\Omega_c}(\tilde{f})^3F.
\end{eqnarray}
\end{lemma}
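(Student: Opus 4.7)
The plan is to split $J=J_1+J_2+J_3$ into the three summands involving $u$, $v$, and $f$ respectively, and prove separately that each is Fr\'echet differentiable on the corresponding factor of $\mathbb{X}$. Since $\mathcal{W}_u\hookrightarrow L^{20/7}(Q)$ and $\mathcal{W}_v\hookrightarrow L^2(Q)$ continuously (via the definition of $X_2$, $X_4$ and Sobolev embeddings), it suffices to show Fr\'echet differentiability of the corresponding integral functionals on $L^{20/7}(Q)$, $L^2(Q)$ and $L^4(Q_c)$, and then compose with these embeddings using the chain rule. The expression (\ref{C7}) then is the sum of the three partial derivatives.

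\textbf{The quadratic and the quartic parts.} The term $J_2(v)=\frac{\alpha_v}{2}\int_0^T\|v-v_d\|^2_{L^2}dt$ is a quadratic form on $L^2(Q)$: expanding $\|\tilde v-v_d+V\|^2-\|\tilde v-v_d\|^2=2(\tilde v-v_d,V)+\|V\|^2$ immediately gives the Fr\'echet derivative $V\mapsto\alpha_v\int_Q(\tilde v-v_d)V$. The term $J_3(f)=\frac{\alpha_f}{4}\int_0^T\|f\|^4_{L^4(\Omega_c)}dt$ is a smooth polynomial functional on $L^4(Q_c)$: expanding $(\tilde f+F)^4-\tilde f^{\,4}-4\tilde f^{\,3}F$ and bounding each of the three lower-order terms by H\"older's inequality with equal exponents $4$ shows the remainder is $O(\|F\|^2_{L^4(Q_c)})$, so the Fr\'echet derivative equals $F\mapsto\alpha_f\int_{Q_c}\tilde f^{\,3}F$.

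\textbf{The $L^{20/7}$-part and the main obstacle.} Setting $p=20/7$ so that $\frac{7}{20}=\frac{1}{p}$ and $p-1=\frac{13}{7}$, we have $J_1(u)=\frac{\alpha_u}{p}\int_Q|u-u_d|^p$. The scalar function $\Phi(t)=\frac{1}{p}|t|^p$ is of class $C^1(\mathbb{R})$ with $\Phi'(t)=\mathrm{sgn}(t)|t|^{p-1}$, so for any $U\in L^p(Q)$ the integral mean value identity
\begin{equation*}
\Phi(\tilde u-u_d+U)-\Phi(\tilde u-u_d)-\Phi'(\tilde u-u_d)U=\int_0^1\bigl[\Phi'(\tilde u-u_d+sU)-\Phi'(\tilde u-u_d)\bigr]U\,ds
\end{equation*}
holds pointwise. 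Integrating over $Q$ and applying H\"older's inequality with exponents $p$ and $p/(p-1)=20/13$, the remainder is bounded by $\|U\|_{L^p(Q)}\int_0^1\bigl\|\Phi'(\tilde u-u_d+sU)-\Phi'(\tilde u-u_d)\bigr\|_{L^{p/(p-1)}(Q)}\,ds$. The main obstacle is showing this integrand tends to $0$ as $U\to 0$ in $L^p(Q)$, uniformly in $s\in[0,1]$. This reduces to the continuity at zero of the Nemytskii operator $w\mapsto\mathrm{sgn}(w)|w|^{p-1}$ from $L^p(Q)$ into $L^{p/(p-1)}(Q)$, which is a classical superposition operator result since $\Phi'$ has exactly $(p-1)$-growth and $(p-1)\cdot p/(p-1)=p$. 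Once this is established, the remainder is $o(\|U\|_{L^p(Q)})$, giving Fr\'echet differentiability of $J_1$ with derivative $U\mapsto\alpha_u\int_Q\mathrm{sgn}(\tilde u-u_d)|\tilde u-u_d|^{13/7}U$. Summing the three derivatives yields (\ref{C7}).
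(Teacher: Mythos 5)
Your argument is correct. The paper states this lemma without proof, treating the computation as routine, so there is no detailed argument in the paper to compare against; your decomposition $J=J_1+J_2+J_3$, the exact expansions for the quadratic term in $v$ and the quartic term in $f$, and the treatment of the $L^{20/7}$-term via the pointwise mean-value identity and H\"older with exponents $20/7$ and $20/13$ are precisely the natural way to fill this gap, and the chain-rule reduction through the continuous embeddings $\mathcal{W}_u\hookrightarrow L^{20/7}(Q)$, $\mathcal{W}_v\hookrightarrow L^2(Q)$ correctly turns the $o(\|U\|_{L^{20/7}(Q)})$, $o(\|V\|_{L^2(Q)})$, $o(\|F\|_{L^4(Q_c)})$ remainders into $o(\|r\|_{\mathbb{X}})$. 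One small imprecision: what you actually need is continuity of the Nemytskii operator $w\mapsto\mathrm{sgn}(w)|w|^{13/7}$ from $L^{20/7}(Q)$ to $L^{20/13}(Q)$ at the point $\tilde{u}-u_d$ (equivalently, continuity at zero of the shifted operator $h\mapsto\Phi'(\tilde{u}-u_d+h)$, whose growth is controlled by $|\tilde{u}-u_d|^{13/7}+|h|^{13/7}$ with $|\tilde{u}-u_d|^{13/7}\in L^{20/13}(Q)$), not continuity at zero of the unshifted operator; this is harmless, since the classical Krasnoselskii--Vainberg theorem you invoke gives continuity of such superposition operators at every point of $L^{20/7}(Q)$ under the $(13/7)$-growth condition, and this also yields the uniformity in $s\in[0,1]$ because $\|sU\|_{L^{20/7}(Q)}\le\|U\|_{L^{20/7}(Q)}$. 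With that reading, the proof is complete.
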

\begin{lemma}
The operator ${G}:\mathbb{X}\rightarrow\mathbb{Y}$  is Fr\'echet differentiable and the  derivative of  ${G}$ in 
$\tilde{s}=(\tilde{u},\tilde{v},\tilde{f})\in\mathbb{X}$ in the direction $r=(U,V,F)\in\mathbb{X}$ is the linear operator

${G}'(\tilde{s})[r]=(G_1'(\tilde{s})[r],G_2'(\tilde{s})[r],G_3'(\tilde{s})[r],G_4'(\tilde{s})[r])$ defined by 
\begin{equation}\label{C8}
\left\{
\begin{array}{rcl}
G_1'(\tilde{s})[r]&=&\partial_tU-\Delta U-\nabla\cdot(U\nabla\tilde{v})-\nabla\cdot(\tilde{u}\nabla V),\\
G_2'(\tilde{s})[r]&=&\partial_tV-\Delta V+V-U-\tilde{f}\,V\,\chi_{_{\Omega_c}}-F\tilde{v},\\
G'_3(\tilde{s})[r]&=&U(0),\\
G'_4(\tilde{s})[r]&=&V(0).
\end{array}
\right.
\end{equation}
\end{lemma}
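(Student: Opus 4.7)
The plan is to decompose the constraint operator $G$ into a continuous affine map plus two continuous bilinear maps, exploiting that such maps are automatically Fr\'echet differentiable with the expected derivative. Writing $G(s)=L(s)-(B_1(u,v),B_2(f,v),0,0)$ with
\begin{equation*}
L(s):=(\partial_tu-\Delta u,\ \partial_tv-\Delta v+v-u,\ u(0)-u_0,\ v(0)-v_0),
\end{equation*}
and $B_1(u,v):=\nabla\cdot(u\nabla v)$, $B_2(f,v):=fv\,\chi_{_{\Omega_c}}$, the continuity of $L:\mathbb{X}\to\mathbb{Y}$ is immediate from the embeddings $X_2\hookrightarrow C([0,T];H^1)$ and $X_4\hookrightarrow C([0,T];W^{3/2,4}_{\bf n})$ supplied by Lemma \ref{feireisl}, together with the boundedness of $\partial_t-\Delta$ from $X_p$ to $L^p(Q)$. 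Since $L$ is continuous affine, its Fr\'echet derivative in every direction is its linear part, which yields the linear contributions to (\ref{C8}).

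The heart of the argument is to check that $B_1$ and $B_2$ are continuous bilinear between the appropriate spaces. For $B_2$, the embedding $W^{3/2,4}(\Omega)\hookrightarrow L^\infty(\Omega)$ (valid in 3D since $\frac{3}{2}\cdot 4=6>3$) gives $\mathcal{W}_v\hookrightarrow L^\infty(Q)$, whence $\|fv\chi_{_{\Omega_c}}\|_{L^4(Q)}\le\|f\|_{L^4(Q_c)}\|v\|_{L^\infty(Q)}\le C\|f\|_{L^4(Q_c)}\|v\|_{\mathcal{W}_v}$. For $B_1$, I expand $\nabla\cdot(u\nabla v)=u\,\Delta v+\nabla u\cdot\nabla v$ and bound each summand in $L^2(Q)$. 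From $X_2\hookrightarrow L^\infty(H^1)\cap L^2(H^2)\hookrightarrow L^\infty(L^6)\cap L^2(L^\infty)$, an interpolation in time yields $u\in L^4(L^{12})$; combined with $\Delta v\in L^4(L^4)$ (coming from $v\in L^4(W^{2,4})$), H\"older gives $u\,\Delta v\in L^2(L^3)\hookrightarrow L^2(Q)$. For the cross term, $\nabla u\in L^2(L^6)$ and $\nabla v\in L^\infty(W^{1/2,4})\hookrightarrow L^\infty(L^{12})$ (using $W^{1/2,4}\hookrightarrow L^{12}$ in 3D, valid since $sp=2<3$), so $\nabla u\cdot\nabla v\in L^2(L^4)\hookrightarrow L^2(Q)$. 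Both bounds are linear in $\|u\|_{\mathcal{W}_u}$ and $\|v\|_{\mathcal{W}_v}$, giving continuity of $B_1:\mathcal{W}_u\times\mathcal{W}_v\to L^2(Q)$.

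Once continuity of $B_1,B_2$ is established, the standard identity
\begin{equation*}
B(\tilde a+U,\tilde b+V)-B(\tilde a,\tilde b)=B(U,\tilde b)+B(\tilde a,V)+B(U,V),\qquad \|B(U,V)\|\le C\|U\|\,\|V\|,
\end{equation*}
shows that each $B_i$ is Fr\'echet differentiable with derivative $(U,V)\mapsto B_i(U,\tilde b)+B_i(\tilde a,V)$. Adding this to the linear derivative of $L$ and collecting terms produces exactly (\ref{C8}). The main obstacle I expect is precisely the bilinear estimate for $B_1$ in three dimensions: the formal products of the second-order regularities of $u$ and $v$ do not land in $L^2(Q)$ without carefully exploiting the extra integrability provided by $v\in L^\infty(W^{3/2,4})$ and a time interpolation of $u$ between $L^\infty(L^6)$ and $L^2(L^\infty)$. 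Everything else is routine.
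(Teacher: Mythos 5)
The paper states this lemma without proof, treating the Fr\'echet differentiability of $G$ as routine, so there is no argument of the paper to compare against; your proposal supplies exactly the standard justification one would expect: $G$ is a continuous affine map plus bounded bilinear maps, and bounded bilinear maps are Fr\'echet differentiable with derivative $(U,V)\mapsto B(U,\tilde b)+B(\tilde a,V)$. Your key product estimates are correct for arbitrary elements of $\mathcal{W}_u\times\mathcal{W}_v$ (not just solutions): $u\Delta v\in L^2(Q)$ via $L^4(L^{12})\cdot L^4(L^4)$ using the interpolation of $L^\infty(L^6)\cap L^2(L^\infty)$, $\nabla u\cdot\nabla v\in L^2(Q)$ via $L^2(L^6)\cdot L^\infty(L^{12})$ using $W^{1/2,4}(\Omega)\hookrightarrow L^{12}(\Omega)$, and $fv\chi_{_{\Omega_c}}\in L^4(Q)$ via $W^{3/2,4}(\Omega)\hookrightarrow L^\infty(\Omega)$; the only cosmetic remark is that your derivative of $B_2$ produces $F\tilde v\chi_{_{\Omega_c}}$, which coincides with the paper's term $F\tilde v$ since $F\in L^4(Q_c)$ is supported in the control domain.
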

We wish to prove the existence of Lagrange multipliers,
which is guaranteed if a  local optimal solution of  problem (\ref{problem-1}) is a regular point  of operator $\mathcal{G}$
(in virtue of Theorem \ref{abs6}).
\begin{remark}\label{regular_point}
Since for  problem (\ref{problem-1}) $\mathcal{N}=\{{\bf 0}\}$, then $\mathcal{N}({G}(\tilde{s}))=\{{\bf 0}\}$. Thus, from Definition \ref{abs4}  we conclude that 
$\tilde{s}=(\tilde{u},\tilde{v},\tilde{f})\in\mathcal{S}_{ad}$
is a regular point  if for  any $(g_u,g_v,U_0,V_0)\in\mathbb{Y}$ there exists $r=(U,V,F)\in \mathcal{W}_u\times\mathcal{W}_v\times\mathcal{C}(\tilde{f})$
such that
\begin{equation*}\label{regular}
{G}'(\tilde{s})[r]=(g_u,g_v,U_0,V_0),
\end{equation*}
where $\mathcal{C}(\tilde{f}):=\{\theta(f-\tilde{f})\,:\, \theta\ge0,\, f\in\mathcal{F}\}$ is the conical hull of  $\tilde{f}$ in $\mathcal{F}$. 
\end{remark}
\begin{lemma}\label{regular-lemma}
Let $\tilde{s}=(\tilde{u},\tilde{v},\tilde{f})\in\mathcal{S}_{ad}$  ($\mathcal{S}_{ad}$ defined in (\ref{admi-1})), then $\tilde{s}$ is a regular point.
\end{lemma}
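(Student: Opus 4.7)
The strategy is to exploit the fact that $\mathcal{C}(\tilde f)$ contains the origin (take $\theta=0$), so it suffices to find $(U,V)\in \mathcal{W}_u\times \mathcal{W}_v$ solving the linear, non-homogeneous, coupled parabolic system obtained by setting $F=0$ in (\ref{C8}):
\begin{equation}\label{lin-sys}
\left\{
\begin{array}{l}
\partial_t U-\Delta U=g_u+\nabla\cdot(U\nabla \tilde v)+\nabla\cdot(\tilde u\,\nabla V)\quad\mbox{in }Q,\\[2pt]
\partial_t V-\Delta V+V=g_v+U+\tilde f\,V\,\chi_{\Omega_c}\quad\mbox{in }Q,\\[2pt]
U(0)=U_0,\ V(0)=V_0,\quad \partial_{\bf n}U=\partial_{\bf n}V=0\mbox{ on }(0,T)\times\partial\Omega,
\end{array}
\right.
\end{equation}
for arbitrary data $(g_u,g_v,U_0,V_0)\in L^2(Q)\times L^4(Q)\times H^1(\Omega)\times W^{3/2,4}_{\bf n}(\Omega)$.

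First I would establish existence of a weak solution $(U,V)$ of (\ref{lin-sys}). Since $(\tilde u,\tilde v)\in X_2\times X_4$ are fixed data, the system is genuinely linear. I would implement a Galerkin scheme (say with the eigenfunctions of $-\Delta+I$ with Neumann condition) or a Banach fixed-point argument on the map that, given $\overline V$ in a suitable $L^p(H^1)$-ball, solves the $U$-equation and then uses this $U$ to solve the $V$-equation. The a priori estimates follow by testing the $U$-equation by $U$ and the $V$-equation by $-\Delta V+V$, using $\nabla\tilde v\in L^\infty(L^4)\cap L^{20/7}(W^{1,20/7})$ and $\tilde u\in L^\infty(H^1)\cap L^2(H^2)$ from Theorem \ref{regularity} together with the interpolation inequality (\ref{interpol}), Young's inequality, and Gronwall's lemma. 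This produces $U\in L^\infty(L^2)\cap L^2(H^1)$ and $V\in L^\infty(H^1)\cap L^2(H^2)$ with appropriate time derivatives.

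Next I would upgrade this weak solution to the strong regularity $(U,V)\in \mathcal{W}_u\times \mathcal{W}_v$ by a bootstrapping argument patterned exactly on the proof of Theorem \ref{regularity}. The key point is that the nonlinear terms $\nabla\cdot(U\nabla \tilde v)=U\Delta\tilde v+\nabla U\cdot\nabla\tilde v$ and $\nabla\cdot(\tilde u\nabla V)=\tilde u\Delta V+\nabla\tilde u\cdot\nabla V$ are bilinear with one factor fixed in a highly regular space. Using the embeddings of Section \ref{ss-ie}, the regularity $\tilde u\in X_2$, $\tilde v\in X_4$, and Lemma \ref{feireisl} applied iteratively (first to the $V$-equation to get $V\in X_{p}$ for successively larger $p$, then to the $U$-equation), one reaches $V\in X_4$ and $U\in X_2$, where the compatibility $U_0\in H^1$, $V_0\in W^{3/2,4}_{\bf n}$ of the data is precisely what is needed in the last two applications of Lemma \ref{feireisl}.

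The main obstacle will be the bootstrap in step two: although the architecture mimics Theorem \ref{regularity}, one must verify that at every intermediate stage the products such as $\nabla U\cdot\nabla\tilde v$ and $\tilde u\,\Delta V$ live in the $L^p(Q)$ space required by Lemma \ref{feireisl}, and that the resulting bound is finite in terms of $\|\tilde u\|_{X_2}$, $\|\tilde v\|_{X_4}$, $\|g_u\|_{L^2(Q)}$, $\|g_v\|_{L^4(Q)}$, $\|U_0\|_{H^1}$, $\|V_0\|_{W^{3/2,4}_{\bf n}}$ and $\|\tilde f\|_{L^4(Q_c)}$. All these estimates are linear in $(U,V)$, so once the sequence of intermediate spaces is fixed as in the proof of Theorem \ref{regularity} they go through. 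Having produced $(U,V)\in \mathcal{W}_u\times \mathcal{W}_v$ satisfying (\ref{lin-sys}), the triple $r=(U,V,0)$ lies in $\mathcal{W}_u\times \mathcal{W}_v\times \mathcal{C}(\tilde f)$ and $G'(\tilde s)[r]=(g_u,g_v,U_0,V_0)$, proving that $\tilde s$ is a regular point.
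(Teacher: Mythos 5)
Your plan is, in outline, exactly the paper's route: since $0\in\mathcal{C}(\tilde f)$, regularity of $\tilde s$ reduces to solving the linear system (\ref{C9}) (your system with $F=0$) for arbitrary data in $\mathbb{Y}$, existence coming from a Galerkin scheme plus energy estimates, and the upgrade to $(U,V)\in\mathcal{W}_u\times\mathcal{W}_v$ from a bootstrap based on Lemma \ref{feireisl} and the regularity $\tilde u\in X_2$, $\tilde v\in X_4$. However, two points of your bootstrap, as stated, would fail or are left open. First, the ordering "apply Lemma \ref{feireisl} to the $V$-equation for successively larger $p$, then to the $U$-equation" cannot be carried out literally: the right-hand side of the $V$-equation contains $U$, and after the first energy estimate one only has $U\in L^\infty(L^2)\cap L^2(H^1)\hookrightarrow L^{10/3}(Q)$, so maximal regularity for the $V$-equation stalls at $V\in X_{10/3}$; reaching $V\in X_4$ requires first improving $U$ beyond $L^{10/3}(Q)$, i.e.\ the two equations must be interleaved (the paper's order is $V\in X_{20/7}$, $V\in X_{10/3}$, then $U$, then $V\in X_4$, then $U\in X_2$).

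Second, the improvement of $U$ is not a direct application of Lemma \ref{feireisl}: with $\nabla U$ known only in $L^2(Q)$ and $\nabla\tilde v\in L^{20}(Q)$, the term $\nabla U\cdot\nabla\tilde v$ lies only in $L^{20/11}(Q)$, not in $L^2(Q)$, so the final application with $p=2$ is not yet justified. The paper bridges this with a second energy estimate, testing the $U$-equation by $-\Delta U$ and controlling the mean value through $\frac{d}{dt}\int_\Omega U=\int_\Omega g_u$ (needed because $-\Delta U$ only controls $\nabla U$); this yields $U\in L^\infty(H^1)\cap L^2(H^2)\hookrightarrow L^{10}(Q)$ with $\nabla U\in L^{10/3}(Q)$, after which $V\in X_4$ and then $U\in X_2$ follow from Lemma \ref{feireisl}. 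Alternatively one can stay with pure maximal regularity, but then, since for $p<2$ the space $X_p$ gives no $L^\infty$-in-time Lebesgue control of $\nabla U$ directly, one must insert the Hilbert-scale interpolation steps of Lemmas \ref{l6} and \ref{l7} (as in Step 2 of Theorem \ref{regularity}); your plan acknowledges the need to verify the intermediate spaces but does not supply this step, which is exactly where the argument would otherwise break down.
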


\begin{proof}
Let  $(g_u,g_v,U_0,V_0)\in \mathbb{Y}$. Since $0\in\mathcal{C}(\tilde{f})=\{\theta(f-\tilde{f})\,:\, \theta\ge0,\, f\in\mathcal{F}\}$, it is sufficient to show  the existence of
$(U,V)\in\mathcal{W}_u\times\mathcal{W}_v$ solving the linear problem
\begin{equation}\label{C9}
\left\{
\begin{array}{rcl}
\partial_tU-\Delta U-\nabla\cdot(U\nabla\tilde{v})-\nabla\cdot(\tilde{u}\nabla V)&=&g_u\quad\mbox{ in }Q,\\
\partial_tV-\Delta V+V-U-\tilde{f}\,V\,\chi_{_{\Omega_c}}&=&g_v \quad \mbox{ in }Q,\\
U(0)=U_0,\ V(0)&=&V_0 \quad\mbox{ in }\Omega,\\
\dfrac{\partial U}{\partial{\bf n}}=0,\ \dfrac{\partial V}{\partial{\bf n}}&=&0 \quad\mbox{ on }(0,T)\times\partial\Omega.
\end{array}
\right.
\end{equation}
Since (\ref{C9}) is a linear system we argue in a formal manner, proving that any regular enough solution is bounded in
$\mathcal{W}_u\times\mathcal{W}_v$.
A detailed proof can be made by using, for instance, a Galerkin method.

Testing (\ref{C9})$_1$ by $U$ and (\ref{C9})$_2$ by $-\Delta V$, we have 
\begin{eqnarray}
&&\frac12\frac{d}{dt}(\|U\|^2+\|\nabla V\|^2)+\|\nabla U\|^2+\|\nabla V\|^2+\|\Delta V\|^2\nonumber\\ 
&&\le|(U\nabla\tilde{v},\nabla U)|+|(\tilde{u}\nabla V,\nabla U)|
+|(g_u,U)|+|(U,\Delta V)|+|(\tilde{f}V\,\chi_{_{\Omega_c}},\Delta V)|+|(g_v,\Delta V)|.\label{C10}
\end{eqnarray}
Using the H\"older and Young inequalities  on the terms on the right side of (\ref{C10}) and taking into account
(\ref{interpol}) we obtain 
\begin{eqnarray}
|(U\nabla\tilde{v},\nabla U)|&\le&\|U\|_{L^4}\|\nabla\tilde{v}\|_{L^4}\|\nabla U\|\le C\|U\|^{1/4}\|\nabla\tilde{v}\|_{L^4}\|U\|^{7/4}_{H^1}\nonumber\\
&\le&\delta\|U\|^2_{H^1}+C_\delta\|\nabla\tilde{v}\|^8_{L^4}\|U\|^2,\label{C11}\\
|(\tilde{u}\nabla V,\nabla U)|&\le&\|\tilde{u}\|_{L^4}\|\nabla V\|_{L^4}\|\nabla U\|\le \delta\|\nabla U\|^2+C_\delta\|\tilde{u}\|^2_{L^4}\|\nabla V\|^{1/2}\|\nabla V\|^{3/2}_{H^1}\nonumber\\
&\le&\delta(\|\nabla U\|^2+\|\nabla V\|^2_{H^1})+C_\delta\|\tilde{u}\|^8_{L^4}\|\nabla V\|^2,\label{C12}\\
|(g_u,U)|&\le&\delta\|U\|^2+C_\delta\|g_u\|^2,\label{C13}\\
|(U,\Delta V)|&\le&\delta\|\Delta V\|^2+C_\delta\|U\|^2,\label{C14}\\
|(\tilde{f}\,V\,\chi_{_{\Omega_c}},\Delta V)|&\le&\|\tilde{f}\|_{L^4}\|V\|_{L^4}\|\Delta V\|\le \delta\|\Delta V\|^2+C_\delta\|\tilde{f}\|^2_{L^4}\|V\|^2_{H^1},\label{C15}\\
|(g_v,\Delta V)|&\le&\delta\|\Delta v\|^2+C_\delta\|g_v\|^2.\label{C16}
\end{eqnarray}
On the other hand, testing by $V$ in (\ref{C9})$_2$ we obtain 
\begin{eqnarray}
\frac12\frac{d}{dt}\|V\|^2+\|\nabla V\|^2+\|V\|^2&\le&|(U,V)|+|(\tilde{f}\,V\,\chi_{_{\Omega_c}},V)|+|(g_v,V)|\nonumber\\
&\le&\delta\|V\|^2_{H^1}+C_\delta\|U\|^2+C_\delta\|\tilde{f}\|^2_{L^4}\|V\|^2+C_\delta\|g_v\|^2.\label{C17}
\end{eqnarray}
Summing the inequalities   (\ref{C10}) and  (\ref{C17}), and then adding $\|U\|^2$ to both sides of the inequality obtained, and taking into account (\ref{C11})-(\ref{C16}), for $\delta$ small enough,  we have
\begin{eqnarray}
\frac{d}{dt}(\|U\|^2+\|V\|^2_{H^1})+C\|U\|^2_{H^1}+C\|V\|^2_{H^2}&\le&C(1+\|\nabla\tilde{v}\|^8_{L^4})\|U\|^2+C(\|g_u\|^2+\|g_v\|^2)\nonumber\\
&+&C\|\tilde{u}\|^8_{L^4}\|\nabla V\|^2+C\|\tilde{f}\|^2_{L^4}\|V\|^2_{H^1}.\label{C18}
\end{eqnarray}
From (\ref{C18}) and Gronwall lemma we deduce that there exists a positive constant $C$ that depends on $T$, $\|U_0\|, \|V_0\|_{H^1}, \|\tilde{u}\|_{L^8(L^4)}$,
$\|\nabla\tilde{v}\|_{L^8(L^4)}$, $\|\tilde{f}\|_{L^2(L^4)}$, $\|g_u\|_{L^2(Q)}$ and $\|g_v\|_{L^2(Q)}$ such that
\begin{equation}\label{C19}
\|U,V\|_{L^\infty(L^2\times H^1)\cap L^2(H^1\times H^2)}\le C.
\end{equation}
In particular, from (\ref{C19}) we obtain that  $(U,V)\in L^{{10}/{3}}(Q)\times L^{10}(Q)$, and since  $\tilde{f}\in L^4(Q_c)$ we have
$\tilde{f}\,V\,\chi_{_{\Omega_c}}\in L^{{20}/{7}}(Q)$. Then, applying Lemma \ref{feireisl} (for $p={20}/{7}$)  to (\ref{C9})$_1$, we deduce that
$$
V\in X_{20/7}.
$$
By Sobolev embeddings $V\in L^\infty(Q)$, so that $\tilde{f}\,V\,\chi_{_{\Omega_c}}\in L^4(Q)$. Thus, using that $U\in L^{{10}/{3}}(Q)$, again by Lemma \ref{feireisl}  (for $p={10}/{3}$) we obtain that
\begin{equation}\label{C20}
V\in X_{10/3}.
\end{equation}

Now, testing  (\ref{C9})$_1$ by $-\Delta U$ we have
\begin{eqnarray}\label{C21}
\frac12\frac{d}{dt}\|\nabla U\|^2+\|\Delta U\|^2&\le&|(U\Delta\tilde{v},\Delta U)|+|(\nabla U\cdot\nabla\tilde{v},\Delta U)|
+|(\tilde{u}\Delta V,\Delta U)|\nonumber\\
&+&|(\nabla\tilde{u}\cdot\nabla V,\Delta U)| +|(g_u,\Delta U)|.
\end{eqnarray}
Applying the H\"older and  Young inequalities to the terms on the right side of (\ref{C21}), and using (\ref{interpol}), we have
\begin{eqnarray}
|(U\Delta\tilde{v},\Delta U)|&\le&\|U\|_{L^6}\|\Delta\tilde{v}\|_{L^3}\|\Delta U\|\le C\|U\|_{H^1}\|\Delta\tilde{v}\|_{L^3}\|\Delta U\|\nonumber\\
&\le&\delta\|U\|^2_{H^2}+C_\delta\|U\|^2_{H^1}\|\Delta\tilde{v}\|^2_{L^3},\label{C22}\\
|(\nabla U\cdot\nabla\tilde{v},\Delta U)|&\le&\|\nabla U\|_{L^4}\|\nabla\tilde{v}\|_{L^4}\|\Delta U\|\le C\|\nabla U\|^{1/4}\|\nabla\tilde{v}\|_{L^4}\|U\|^{7/4}_{H^2}\nonumber\\
&\le&\delta\|U\|^2_{H^2}+C_\delta\|\nabla U\|^2\|\nabla\tilde{v}\|^8_{L^4},\label{C23}\\
|(\tilde{u}\Delta V,\Delta U)|
&\le&
\|\tilde{u}\|_{L^6}\|\Delta V\|_{L^3}\|\Delta U\|\le C\|\tilde{u}\|_{H^1}\|\Delta V\|_{L^3}\|\Delta U\|\nonumber\\
&\le&\delta\|U\|^2_{H^2}+C_\delta\|\tilde{u}\|^2_{H^1}\|\Delta V\|^2_{L^3},\label{C24}\\
|(\nabla\tilde{u}\cdot\nabla V,\Delta U)|&\le&\|\nabla\tilde{u}\|_{L^3}\|\nabla V\|_{L^6}\|\Delta U\|\le C\|\nabla\tilde{u}\|_{L^3}\|\nabla V\|_{H^1}\|\Delta U\|\nonumber\\
&\le&\delta\|U\|^2_{H^2}+C_\delta\|\nabla\tilde{u}\|^2_{L^3}\|V\|^2_{W^{7/5,10/3}},\label{C25}\\
|(g_u,\Delta U)|&\le&\delta\|\Delta U\|^2+C_\delta\|g_u\|^2.\label{C26}
\end{eqnarray}
Now, we observe that $\displaystyle\frac{d}{dt}\left(\int_\Omega U\right)=\int_\Omega g_u$, which implies
\begin{equation}\label{C27-1}
\frac12\frac{d}{dt}\left(\int_\Omega U\right)^2=\left(\int_\Omega g_u\right)\left(\int_\Omega U\right)\le C_\delta\left(\int_\Omega g_u\right)^2+\delta\left(\int_\Omega U\right)^2
\end{equation}
and
\begin{equation}\label{C27}
\left|\int_\Omega U(t)\right|^2\le \left|\int_\Omega U_0+\int_0^t\int_\Omega g_u\right|^2\le C.
\end{equation}
Summing inequalities (\ref{C21}), (\ref{C27-1}) and (\ref{C27}), and taking into account (\ref{C22})-(\ref{C26}), for $\delta$ small enough, we obtain 
\begin{eqnarray}\label{C28}
\frac{d}{dt}\|U\|^2_{H^1}+C\|U\|^2_{H^2}&\le&C\|U\|^2_{H^1}\|\Delta\tilde{v}\|^2_{L^3}+C\|\nabla U\|^2\|\nabla\tilde{v}\|^8_{L^4}+C\|\tilde{u}\|^2_{H^1}\|\Delta V\|^2_{L^3}\nonumber\\
&+&C\|\nabla\tilde{u}\|^2_{L^3}\|V\|^2_{W^{7/5,10/3}}+C\|g_u\|^2+C.
\end{eqnarray}

We observe that from (\ref{C20}) we have $ V\in L^{\infty}(W^{7/5,10/3}) \cap L^{10/3}(W^{2,10/3})$, 
and we know that $\tilde{u} \in X_2$, $\tilde{v}\in X_4$.
Then,  from (\ref{C28})  and Gronwall lemma we deduce
\begin{equation*}\label{C29}
U\in L^\infty(H^1)\cap L^2(H^2)\hookrightarrow L^{10}(Q).
\end{equation*}
Now, since $U\in L^{10}(Q)$ and  $\tilde{f}\,V\,\chi_{_{\Omega_c}}\in L^4(Q)$,  we have 
$ U + \tilde{f}\,V\,\chi_{_{\Omega_c}} \in L^4(Q)$. Then, from  (\ref{C9})$_2$ and Lemma~\ref{feireisl} (for $p=4$) we conclude that
$V\in X_4$.

\noindent Finally, using that $(\tilde{u},U)\in L^{10}(Q)^2$, $(\Delta\tilde{v},\Delta V)\in L^4(Q)^2$, $(\nabla \tilde{u},\nabla U)\in L^{10/3}(Q)^2$, and $(\nabla\tilde{v},\nabla V)\in L^{20}(Q)^2$ we deduce
\begin{equation}\label{C30}
\nabla\cdot(U\nabla\tilde{v})+\nabla\cdot(\tilde{u}\nabla V)\in L^{20/7}\hookrightarrow L^2(Q).
\end{equation}
Therefore, thanks to (\ref{C30}), applying Lemma \ref{feireisl}  (for $p=2$) to (\ref{C9})$_1$, we conclude that $U\in X_2$. Thus, the proof is finished.
\end{proof}
\begin{remark}\label{uni-lineal}
Using a classical comparison argument,   inequality (\ref{interpol}) and  Gronwall lemma,   the uniqueness of solutions of system (\ref{C9}) is deduced.
\end{remark}
Now we show the existence of Lagrange multiplier for problem (\ref{func}) associated to any local optimal solution $\tilde{s}=(\tilde{u},\tilde{v},\tilde{f})\in\mathcal{S}_{ad}.$
\begin{theorem}\label{lagrange}
Let $\tilde{s}=(\tilde{u},\tilde{v},\tilde{f})\in\mathcal{S}_{ad}$ be a local optimal solution for the control problem (\ref{func}). Then, there exist a Lagrange multiplier 
$\xi=(\lambda,\eta,\varphi_1,\varphi_2)\in L^2(Q)\times L^{4/3}(Q)\times (H^1(\Omega))'\times (W^{3/2,4}_{\bf n}(\Omega))'$ such that for all 
$(U,V,F)\in \mathcal{W}_u\times\mathcal{W}_v\times\mathcal{C}(\tilde{f})$
\begin{eqnarray}
&&\alpha_u\int_0^T\int_{\Omega}{\rm sgn}(\tilde{u}-u_d)|\tilde{u}-u_d|^{13/7}U+\alpha_v\int_0^T\int_{\Omega}(\tilde{v}-v_d)V+\alpha_f\int_0^T\int_{\Omega_c}(\tilde{f})^3F\nonumber\\
&&-\int_0^T\int_\Omega\bigg(\partial_tU-\Delta U-\nabla\cdot(U\nabla\tilde{v})-\nabla\cdot(\tilde{u}\nabla V)\bigg)\lambda
-\int_0^T\int_\Omega\bigg(\partial_tV-\Delta V+V-U-\tilde{f}V\chi_{_{\Omega_c}}\bigg)\eta\nonumber\\
&&-\int_\Omega U(0)\varphi_1-\int_\Omega V(0)\varphi_2+\int_0^T\int_{\Omega_c}F\tilde{v}\eta\ge0.\label{M1}
\end{eqnarray}
\end{theorem}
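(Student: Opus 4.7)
The plan is to recognize the statement as an immediate application of the Zowe-Kurcyusz theorem (Theorem \ref{abs6}) to the reformulated problem (\ref{problem-1}), once all the abstract hypotheses are verified. All heavy lifting has been done in the previous results: $J$ and $G$ are Fr\'echet differentiable with derivatives (\ref{C7}) and (\ref{C8}), $\mathbb{M}$ is a nonempty closed convex subset of $\mathbb{X}$, $\mathcal{N}=\{\mathbf{0}\}$ is a closed convex cone, and Lemma \ref{regular-lemma} asserts that every $\tilde s\in \mathcal{S}_{ad}$ is a regular point. Thus Theorem \ref{abs6} supplies a Lagrange multiplier $\xi\in \mathbb{Y}'$ satisfying (\ref{abs3}).

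The first two conditions in (\ref{abs3}) are automatic: since $\mathcal{N}=\{\mathbf{0}\}$ one has $\mathcal{N}^+=\mathbb{Y}'$, and $G(\tilde s)=\mathbf{0}$ forces $\langle \xi,G(\tilde s)\rangle=0$. The substantive content is the third condition. I would then identify
$$
\mathbb{Y}' \;\simeq\; L^2(Q)\times L^{4/3}(Q)\times (H^1(\Omega))'\times (W^{3/2,4}_{\mathbf n}(\Omega))'
$$
via the standard Riesz/duality pairings, and write $\xi=(\lambda,\eta,\varphi_1,\varphi_2)$; the exponent $4/3$ in the second slot is forced as the conjugate of $4$ coming from $G_2:\mathbb{X}\to L^4(Q)$, which yields exactly the regularity claimed in the statement.

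Next I would unpack $J'(\tilde s)-\xi\circ G'(\tilde s)\in \mathcal{C}(\tilde s)^+$. The key structural observation is that because $\mathcal{W}_u$ and $\mathcal{W}_v$ (defined in (\ref{space_state_u})-(\ref{space_state_v})) are linear subspaces, the conical hull of $\tilde s$ in $\mathbb{M}$ is
$$
\mathcal{C}(\tilde s) \;=\; \mathcal{W}_u\times \mathcal{W}_v\times \mathcal{C}(\tilde f),
$$
so the polar-cone condition becomes $(J'(\tilde s)-\xi\circ G'(\tilde s))[r]\ge 0$ for every $r=(U,V,F)\in \mathcal{W}_u\times \mathcal{W}_v\times \mathcal{C}(\tilde f)$. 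Substituting (\ref{C7}) for $J'(\tilde s)[r]$ and (\ref{C8}) for $G'(\tilde s)[r]$ into this inequality, and splitting the term $-F\tilde v\chi_{_{\Omega_c}}$ of $G_2'(\tilde s)[r]$ away from the parts that depend linearly on $(U,V)$, produces precisely (\ref{M1}); the sign flip turning $-(-F\tilde v\chi_{_{\Omega_c}})\eta$ into $+F\tilde v\eta$ on $\Omega_c$ is the origin of the last term in (\ref{M1}).

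The step I expect to be the most delicate is really just the bookkeeping in the last paragraph: checking that every duality pairing makes sense with the regularity asserted for $(\lambda,\eta,\varphi_1,\varphi_2)$ (in particular that $\eta\in L^{4/3}(Q)$ pairs correctly with $G_2'(\tilde s)[r]\in L^4(Q)$, and that the integrals against $U(0)$ and $V(0)$ are interpreted as dualities in $(H^1)'$ and $(W^{3/2,4}_{\mathbf n})'$ respectively). No additional PDE estimate is needed at this stage because the regular-point property, which is the only nontrivial ingredient required by Theorem \ref{abs6}, has already been established in Lemma \ref{regular-lemma}.
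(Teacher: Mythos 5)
Your proposal is correct and follows essentially the same route as the paper: invoke Lemma \ref{regular-lemma} to get the regular-point property, apply the Zowe--Kurcyusz theorem (Theorem \ref{abs6}), identify $\xi=(\lambda,\eta,\varphi_1,\varphi_2)$ in the dual of $\mathbb{Y}$, and expand the polar-cone condition (\ref{abs3})$_3$ using (\ref{C7})--(\ref{C8}) over $\mathcal{W}_u\times\mathcal{W}_v\times\mathcal{C}(\tilde f)$ to obtain (\ref{M1}). Your explicit remark that $\mathcal{C}(\tilde s)=\mathcal{W}_u\times\mathcal{W}_v\times\mathcal{C}(\tilde f)$ because the state spaces are linear is a detail the paper uses implicitly, and it is accurate.
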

\begin{proof}
From Lemma \ref{regular-lemma}, $\tilde{s}\in\mathcal{S}_{ad}$ is a regular point, then from Theorem \ref{abs6} 
there exists a Lagrange multiplier  
$\xi=(\lambda,\eta,\varphi_1,\varphi_2)\in L^2(Q)\times L^{4/3}(Q)\times (H^1(\Omega))'\times (W^{3/2,4}_{\bf n}(\Omega))'$ such that
by (\ref{abs3})$_3$ one must satisfy
\begin{equation}\label{M1-1}
J'(\tilde{s})[r]-\langle R_1'(\tilde{s})[r],\lambda\rangle-\langle R_2'(\tilde{s})[r],\eta\rangle-\langle R_3'(\tilde{s})[r],\varphi_1\rangle
-\langle R_4'(\tilde{s})[r],\varphi_2\rangle\ge 0,
\end{equation}
for all $r=(U,V,F)\in\mathcal{W}_u\times\mathcal{W}_v\times\mathcal{C}(\tilde{f}).$ Thus, the proof follows from (\ref{C7}), (\ref{C8}) and (\ref{M1-1}).
\end{proof}

From Theorem \ref{lagrange}, we derive an optimality system for problem (\ref{func}), by considering the  spaces 
\begin{equation*}\label{spaces_optimality}
\mathcal{W}_{u_0}=\{u\in\mathcal{W}_u\,:\, u(0)=0\},\quad \mathcal{W}_{v_0}=\{v\in\mathcal{W}_v\,:\, v(0)=0\}.
\end{equation*}
\begin{corol}
Let $\tilde{s}=(\tilde{u},\tilde{v},\tilde{f})\in\mathcal{S}_{ad}$ be a local optimal solution for the control problem (\ref{func}). Then the Lagrange multiplier
$(\lambda,\eta)\in L^2(Q)\times L^{4/3}(Q)$, provided by Theorem \ref{lagrange}, satisfies the system
\begin{eqnarray}
&&\int_0^T\int_\Omega\bigg(\partial_tU-\Delta U-\nabla\cdot(U\nabla\tilde{v})\bigg)\lambda-\int_0^T\int_\Omega U\eta\nonumber\\
&&\hspace{0.7cm}=\alpha_u\int_0^T\int_{\Omega}{\rm sgn}(\tilde{u}-u_d)|\tilde{u}-u_d|^{13/7}U,\qquad  \forall \, U\in\mathcal{W}_{u_0}\label{M2},\\
&&\int_0^T\int_\Omega\bigg(\partial_t V-\Delta V+V\bigg)\eta-\int_0^T\int_{\Omega_c}\tilde{f}V\eta-\int_0^T\int_\Omega\nabla\cdot(\tilde{u}\nabla V)\lambda\nonumber\\
&&\hspace{0.7cm}=\alpha_v\int_0^T\int_{\Omega}(\tilde{v}-v_d)V,\qquad  \forall \,V\in\mathcal{W}_{v_0}\label{M3},
\end{eqnarray}

and the optimality condition
\begin{equation}\label{M5}
\int_0^T\int_{\Omega_c}(\alpha_f(\tilde{f})^3+\tilde{v}\eta)(f-\tilde{f})\ge 0\qquad \forall f\in\mathcal{F}.
\end{equation}
\end{corol}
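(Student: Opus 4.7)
The plan is to specialize the variational inequality \eqref{M1} provided by Theorem \ref{lagrange} along three classes of admissible directions $r = (U, V, F)$ in the cone $\mathcal{W}_u \times \mathcal{W}_v \times \mathcal{C}(\tilde{f})$, exploiting the fact that the first two factors are linear spaces while the third is only a convex cone.

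First, to obtain \eqref{M2}, I would test \eqref{M1} with $r = (U, 0, 0)$, where $U \in \mathcal{W}_{u_0}$; this is admissible because $\mathcal{W}_{u_0} \subset \mathcal{W}_u$ and $0 \in \mathcal{C}(\tilde{f})$ (taking $\theta = 0$). Since $U(0) = 0$ the term $\int_\Omega U(0)\varphi_1$ drops out, and the $V$- and $F$-dependent terms vanish, so \eqref{M1} reduces to a linear inequality in $U$. Because $\mathcal{W}_{u_0}$ is a vector space, I may then also test with $-U$; the two resulting inequalities combine to an equality, which is exactly \eqref{M2}.

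Second, to obtain \eqref{M3}, I would repeat this procedure with $r = (0, V, 0)$ for $V \in \mathcal{W}_{v_0}$. Now $V(0) = 0$ kills the $\varphi_2$-term, the $U$- and $F$-dependent terms vanish, and linearity of $\mathcal{W}_{v_0}$ again promotes the resulting inequality to an equality, yielding \eqref{M3}.

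Third, to obtain \eqref{M5}, I would test with $r = (0, 0, F)$ where $F = f - \tilde{f}$ for arbitrary $f \in \mathcal{F}$; this lies in $\mathcal{C}(\tilde{f})$ via the choice $\theta = 1$. The only surviving terms in \eqref{M1} are those involving $F$, namely $\alpha_f \int_0^T \int_{\Omega_c} (\tilde{f})^3 F + \int_0^T \int_{\Omega_c} F\,\tilde{v}\,\eta \ge 0$, which is precisely \eqref{M5}. Note that here no equality is obtained because $\mathcal{C}(\tilde{f})$ is only a cone: one cannot in general test with $-F$, as $\tilde{f} - (f - \tilde{f})$ need not lie in $\mathcal{F}$.

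There is no real obstacle beyond bookkeeping; the only subtle point is ensuring admissibility of the test directions, in particular confirming that $0 \in \mathcal{C}(\tilde{f})$ so that the three cases can be decoupled, and recognizing the asymmetry between the linear spaces $\mathcal{W}_{u_0}, \mathcal{W}_{v_0}$ (which produce the state equations \eqref{M2}--\eqref{M3} with equality) and the cone $\mathcal{C}(\tilde{f})$ (which produces only the variational inequality \eqref{M5}).
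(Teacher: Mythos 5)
Your proposal is correct and follows essentially the same route as the paper's proof: specialize the variational inequality \eqref{M1} to directions $(U,0,0)$ with $U\in\mathcal{W}_{u_0}$, $(0,V,0)$ with $V\in\mathcal{W}_{v_0}$, and $(0,0,F)$ with $F\in\mathcal{C}(\tilde{f})$, using the vector-space structure of $\mathcal{W}_{u_0}$, $\mathcal{W}_{v_0}$ to upgrade the inequalities to the equalities \eqref{M2}--\eqref{M3}, and the cone structure (with $F=f-\tilde{f}$, i.e.\ $\theta=1$) to get \eqref{M5}. Your explicit checks of admissibility (that $0\in\mathcal{C}(\tilde{f})$ and that $U(0)=0$, $V(0)=0$ remove the multiplier terms $\varphi_1$, $\varphi_2$) match what the paper does implicitly.
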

\begin{proof}
From (\ref{M1}), taking  $(V,F)=(0,0)$, and using that $\mathcal{W}_{u_0}$ is a vectorial space, we have (\ref{M2}). Similarly, taking $(U,F)=(0,0)$ in (\ref{M1}), and taking into account
that $\mathcal{W}_{v_0}$ is a vectorial space, we deduce (\ref{M3}).
Finally, taking $(U,V)=(0,0)$ in (\ref{M1}) we have
$$
\alpha_f\int_0^T\int_{\Omega_c}(\tilde{f})^3F+\int_0^T\int_{\Omega_c}\tilde{v}\eta F\ge0\quad \forall \, F\in\mathcal{C}(\tilde{f}).
$$
Thus, choosing $F=\theta(f-\tilde{f})\in \mathcal{C}(\tilde{f})$ for all $f\in\mathcal{F}$ and $\theta\ge0$ in the last inequality, we have (\ref{M5}). 
\end{proof}
\begin{remark}\label{very-weak}
A pair $(\lambda,\eta)\in L^2(Q)\times L^{4/3}(Q)$ satisfying (\ref{M2})-(\ref{M3}) corresponds to the concept of very weak solution of the linear system
\begin{equation}\label{M4}
\left\{
\begin{array}{rcl}
\partial_t\lambda+\Delta\lambda-\nabla\lambda\cdot\nabla\tilde{v}+\eta&=&-\alpha_u{\rm sgn}(\tilde{u}-u_d)|\tilde{u}-u_d|^{13/7}\quad \mbox{ in }Q,\\
\partial_t\eta+\Delta\eta+\nabla\cdot(\tilde{u}\nabla\lambda)-\eta+\tilde{f}\,\eta\,\chi_{_{\Omega_c}}&=&-\alpha_v(\tilde{v}-v_d)\quad \mbox{ in }Q,\\
\lambda(T)=0,\ \eta(T)&=&0\quad \mbox{ in }\Omega,\\
\dfrac{\partial\lambda}{\partial{\bf n}}=0,\ \dfrac{\partial\eta}{\partial{\bf n}}&=&0\quad \mbox{ on }(0,T)\times\partial\Omega.
\end{array}
\right.
\end{equation}
\end{remark}

\begin{theorem}\label{regularity_lagrange}
Let $\tilde{s}=(\tilde{u},\tilde{v},\tilde{f})\in\mathcal{S}_{ad}$ be a local optimal solution for the problem (\ref{func}) and $u_d\in L^{26/7}(Q)$. Then
the system (\ref{M4}) has a unique solution $(\lambda,\eta)$ such that
\begin{eqnarray}
&&\lambda\in X_2,\label{reg1}\\
&&\eta\in X_{5/3}.\label{regg2}
\end{eqnarray}
\end{theorem}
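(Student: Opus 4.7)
The plan is to perform a time reversal $\hat\lambda(t,x)=\lambda(T-t,x)$, $\hat\eta(t,x)=\eta(T-t,x)$, together with $\hat u=\tilde u(T-\cdot)$, $\hat v=\tilde v(T-\cdot)$, $\hat f=\tilde f(T-\cdot)$, $\hat u_d=u_d(T-\cdot)$, $\hat v_d=v_d(T-\cdot)$, which turns (\ref{M4}) into a forward parabolic system with vanishing initial data and homogeneous Neumann boundary conditions:
\begin{equation*}
\left\{
\begin{array}{rcl}
\partial_t\hat\lambda-\Delta\hat\lambda+\nabla\hat\lambda\cdot\nabla\hat v-\hat\eta &=& \alpha_u\,{\rm sgn}(\hat u-\hat u_d)|\hat u-\hat u_d|^{13/7},\\
\partial_t\hat\eta-\Delta\hat\eta+\hat\eta-\hat f\hat\eta\chi_{_{\Omega_c}}-\nabla\cdot(\hat u\nabla\hat\lambda) &=& \alpha_v(\hat v-\hat v_d).
\end{array}
\right.
\end{equation*}
First I would verify that both sources lie in $L^2(Q)$: for the $\hat\eta$-equation this is immediate since $\hat v\in L^\infty(Q)$ and $v_d\in L^2(Q)$; for the $\hat\lambda$-equation the hypothesis $u_d\in L^{26/7}(Q)$ combined with $\tilde u\in X_2\hookrightarrow L^{10}(Q)\hookrightarrow L^{26/7}(Q)$ yields $|\hat u-\hat u_d|^{13/7}\in L^2(Q)$.

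For existence I would set up a Galerkin scheme on the eigenfunctions of the Neumann Laplacian. The key a priori estimate is obtained by testing the $\hat\lambda$-equation by $\hat\lambda-\Delta\hat\lambda$ and the $\hat\eta$-equation by $\hat\eta$, and summing. The delicate contributions are the term $\int_\Omega\nabla\hat\lambda\cdot\nabla\hat v\,\Delta\hat\lambda$, absorbed via the 3D interpolation $\|\nabla\hat\lambda\|_{L^3}\le C\|\nabla\hat\lambda\|^{1/2}\|\hat\lambda\|_{H^2}^{1/2}$ and the regularity $\nabla\hat v\in L^\infty(L^6)$ (consequence of $\hat v\in X_4\hookrightarrow L^\infty(W^{3/2,4}_{\bf n})\hookrightarrow L^\infty(W^{1,6})$), and the coupling term $-\int_\Omega\hat u\,\nabla\hat\lambda\cdot\nabla\hat\eta$ produced by integrating $\nabla\cdot(\hat u\nabla\hat\lambda)\,\hat\eta$ by parts, controlled by $\|\hat u\|_{L^\infty}\|\nabla\hat\lambda\|\|\nabla\hat\eta\|$ using that $\hat u\in L^2(L^\infty)$ (from $\hat u\in L^2(H^2)$ and the 3D embedding $H^2\hookrightarrow L^\infty$). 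Gronwall's lemma then supplies
$$\hat\lambda\in L^\infty(H^1)\cap L^2(H^2),\qquad \hat\eta\in L^\infty(L^2)\cap L^2(H^1).$$
The main obstacle is exactly this coupling: because $\nabla\cdot(\hat u\nabla\hat\lambda)$ is second order in $\hat\lambda$, the two equations have to be estimated jointly, and the test function $\hat\lambda-\Delta\hat\lambda$ (not merely $\hat\lambda$) is required to produce the $\|\hat\lambda\|_{H^2}$-control that matches the order of the coupling.

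With these basic estimates in hand, I would bootstrap via Lemma \ref{feireisl}. For the $\hat\eta$-equation, writing $\nabla\cdot(\hat u\nabla\hat\lambda)=\hat u\Delta\hat\lambda+\nabla\hat u\cdot\nabla\hat\lambda$ and using $\hat u\in L^{10}(Q)$ (from $L^\infty(L^6)\cap L^2(L^\infty)$ via Lemma \ref{l4}), $\Delta\hat\lambda\in L^2(Q)$, and both $\nabla\hat u$ and $\nabla\hat\lambda$ in $L^{10/3}(Q)$, one obtains $\nabla\cdot(\hat u\nabla\hat\lambda)\in L^{5/3}(Q)$; together with $\hat f\hat\eta\chi_{_{\Omega_c}}\in L^{20/11}(Q)\hookrightarrow L^{5/3}(Q)$ (from $\hat f\in L^4$, $\hat\eta\in L^{10/3}(Q)$) and $\alpha_v(\hat v-\hat v_d)\in L^2(Q)$, the whole right-hand side is in $L^{5/3}(Q)$, so Lemma \ref{feireisl} with $p=5/3$ yields $\hat\eta\in X_{5/3}$. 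For the $\hat\lambda$-equation, using $\nabla\hat v\in L^{20}(Q)$ (as derived in Step 4 of the proof of Theorem \ref{regularity}) together with $\nabla\hat\lambda\in L^{10/3}(Q)$ gives $\nabla\hat\lambda\cdot\nabla\hat v\in L^{20/7}(Q)\hookrightarrow L^2(Q)$, while $\hat\eta\in X_{5/3}\hookrightarrow L^5(Q)\hookrightarrow L^2(Q)$ and the $\hat g_1$-source is already in $L^2(Q)$; Lemma \ref{feireisl} with $p=2$ then gives $\hat\lambda\in X_2$. Reverting the time change produces (\ref{reg1})-(\ref{regg2}).

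Uniqueness follows at once from linearity: the difference of two solutions satisfies the same system with zero data, and the same coupled energy estimate combined with Gronwall forces it to vanish.
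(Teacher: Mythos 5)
Your proposal follows essentially the same route as the paper: time reversal, a coupled energy estimate obtained by testing the reversed $\lambda$-equation with (essentially) $-\Delta\tilde\lambda$ and the $\eta$-equation with $\tilde\eta$, Gronwall, and then a bootstrap through Lemma \ref{feireisl} to reach $X_2\times X_{5/3}$, with only cosmetic differences (your choice of H\"older exponents, e.g.\ $\nabla\tilde v\in L^\infty(L^6)$ and $\tilde u\in L^2(L^\infty)$ instead of the paper's $L^8(L^4)$-type coefficients, testing additionally by $\hat\lambda$ instead of invoking the Neumann norm equivalence, and bootstrapping $\eta$ before $\lambda$). One small slip worth fixing: $L^\infty(L^6)\cap L^2(L^\infty)$ only embeds into $L^{8}(Q)$, but the needed $\tilde u\in L^{10}(Q)$ follows directly from $\tilde u\in L^\infty(H^1)\cap L^2(H^2)$ via Lemma \ref{l4}, exactly as the paper states, so the argument is unaffected.
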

\begin{proof}
Since the desired state $u_d\in L^{26/7}(Q)$, we have that $h(\tilde{u}):={\rm sgn}(\tilde{u}-u_d)|\tilde{u}-u_d|^{13/7}\in L^2(Q)$. 
In fact, $\tilde{u}$ is more regular because 
assuming $\tilde{u}\in L^{20/7}(Q)$, it can be proved that
$\tilde{u}\in L^\infty(H^1)\cap L^2(H^2)\hookrightarrow L^{10}(Q)$ (see the proof of the Theorem \ref{regularity} for more details).

Let $s=T-t$, with $t\in(0,T)$
and $\tilde{\lambda}(s)=\lambda(t)$, $\tilde{\eta}(s)=\eta(t)$. Then, system (\ref{M4}) is equivalent to 
\begin{equation}\label{R1}
\left\{
\begin{array}{rcl}
\partial_s\tilde{\lambda}-\Delta\tilde{\lambda}+\nabla\tilde{\lambda}\cdot\nabla\tilde{v}-\tilde{\eta}&=&\alpha_uh(\tilde{u})\quad \mbox{ in }Q,\\
\partial_s\tilde{\eta}-\Delta\tilde{\eta}-\nabla\cdot(\tilde{u}\nabla\tilde{\lambda})+\tilde{\eta}-\tilde{f}\,\tilde{\eta}\,\chi_{_{\Omega_c}}&=&\alpha_v(\tilde{v}-v_d)\quad \mbox{ in }Q,\\
\tilde{\lambda}(0)=0,\ \tilde{\eta}(0)&=&0\quad \mbox{ in }\Omega,\\
\dfrac{\partial\tilde{\lambda}}{\partial{\bf n}}=0,\ \dfrac{\partial\tilde{\eta}}{\partial{\bf n}}&=&0\quad \mbox{ on }(0,T)\times\partial\Omega.
\end{array}
\right.
\end{equation}
Testing (\ref{R1})$_1$ by $-\Delta\tilde{\lambda}$  and (\ref{R1})$_2$ by $\tilde{\eta}$, and using H\"older and Young inequalities,  we can obtain 
\begin{eqnarray}\label{R2}
\frac12\frac{d}{ds}(\|\nabla\tilde{\lambda}\|^2+\|\tilde{\eta}\|^2)+\|\Delta\tilde{\lambda}\|^2+\|\tilde{\eta}\|^2_{H^1}\le
\delta(\|\nabla\tilde{\lambda}\|^2_{H^1}+\|\Delta\tilde{\lambda}\|^2+\|\nabla\tilde{\eta}\|^2)
+C_\delta(1+\|\tilde{f}\|^{8/5}_{L^4})\|\tilde{\eta}\|^2\nonumber\\
\hspace*{-0.6cm}+C_\delta(\|\tilde{u}\|^8_{L^4}+\|\nabla\tilde{v}\|^8_{L^4})\|\nabla\tilde{\lambda}\|^2+C_\delta(\|h(\tilde{u})\|^2+\|\tilde{v}-v_d\|^2).
\end{eqnarray}
Now, since $\dfrac{\partial\tilde{\lambda}}{\partial{\bf n}}=0$ on $\partial\Omega$,  then by \cite[Corollary 3.5]{amrouche} we have
\begin{equation}\label{R2-1}
\|\nabla\tilde{\lambda}\|^2_{H^1}\simeq \|\nabla\tilde{\lambda}\|^2+\|\Delta\tilde{\lambda}\|^2.
\end{equation}
Thus, taking $\delta$ small enough, from (\ref{R2}) and (\ref{R2-1}) we deduce the following energy inequality 
\begin{eqnarray*}
\frac{d}{ds}(\|\nabla\tilde{\lambda}\|^2+\|\tilde{\eta}\|^2)
+C(\|\nabla \tilde{\lambda}\|_{H^1}^2+\|\tilde{\eta}\|^2_{H^1})&\le&
C(\|\tilde{u}\|^8_{L^4}+\|\nabla\tilde{v}\|^8_{L^4}+1)\|\nabla\tilde{\lambda}\|^2+C(1+\|\tilde{f}\|^{8/5}_{L^4})\|\tilde{\eta}\|^2\\
&+&C(\|h(\tilde{u})\|^2+\|\tilde{v}-v_d\|^2),
\end{eqnarray*}
which, jointly with Gronwall lemma, implies 
$$
(\nabla\tilde{\lambda}, \tilde{\eta})\in L^\infty(L^2)\cap L^2(H^1)\hookrightarrow L^{10/3}(Q).
$$
In particular, using that $(\nabla\tilde{\lambda},\nabla\tilde{v})\in L^{10/3}(Q)\times L^{20}(Q)$, we have $\nabla\tilde{\lambda}\cdot\nabla\tilde{v}\in L^{20/7}(Q)\hookrightarrow L^2(Q)$. Thus,
applying Lemma \ref{feireisl} (for $p=2$) to  (\ref{R1})$_1$, we deduce  (\ref{reg1}).

On the other hand, since $\tilde{f}\in L^4(Q_c)$, $\tilde{\eta}\in L^{{10}/{3}}(Q)$, we have
\begin{equation}\label{R3}
\tilde{f}\,\tilde{\eta}\,\chi_{_{\Omega_c}}\in L^{{20}/{11}}(Q).
\end{equation}
Now, taking into account that $\tilde{u}\in L^\infty(H^1)\cap L^2(H^2)\hookrightarrow L^{10}(Q),$ $\Delta\tilde{\lambda}\in L^2(Q)$, and 
$\nabla\tilde{u},\nabla\tilde{\lambda}\in L^{{10}/{3}}(Q)$, we deduce 
\begin{equation}\label{R4}
\nabla\cdot(\tilde{u}\nabla\tilde{\lambda})=\tilde{u}\Delta\tilde{\lambda}+\nabla\tilde{u}\cdot\nabla\tilde{\lambda}\in L^{5/3}(Q).
\end{equation}
Therefore, from (\ref{R1})$_2$, (\ref{R3}), (\ref{R4}) and Lemma \ref{feireisl}  (for $p=5/3$) we obtain (\ref{regg2}).
\end{proof}
In the following result, we obtain more regularity for the  Lagrange multiplier $(\lambda,\eta)$ than provided by Theorem \ref{lagrange}.
\begin{theorem}\label{regularity-multiplier}
Let $\tilde{s}=(\tilde{u},\tilde{v},\tilde{f})\in\mathcal{S}_{ad}$ be a local optimal solution for the control problem (\ref{func}). Then the Lagrange
multiplier, provided by Theorem \ref{lagrange}, satisfies $(\lambda,\eta)\in X_2\times X_{5/3}$.
\end{theorem}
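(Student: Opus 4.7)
The plan is to exploit Theorem~\ref{regularity_lagrange}, which already produces a strong solution $(\lambda^\star,\eta^\star)\in X_2\times X_{5/3}$ of the adjoint system (\ref{M4}), and to argue that the Lagrange multiplier $(\lambda,\eta)\in L^2(Q)\times L^{4/3}(Q)$ supplied by Theorem~\ref{lagrange} must coincide with $(\lambda^\star,\eta^\star)$. The identification rests on two ingredients: first, any strong solution of (\ref{M4}) automatically satisfies the very weak variational identities (\ref{M2})--(\ref{M3}); and second, such very weak solutions are unique in $L^2(Q)\times L^{4/3}(Q)$.

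To establish the first point, I would multiply (\ref{M4})$_1$ by any $U\in\mathcal{W}_{u_0}$ and (\ref{M4})$_2$ by any $V\in\mathcal{W}_{v_0}$, integrate over $Q$, and perform integration by parts. Integration in time eliminates the boundary contributions thanks to the terminal conditions $\lambda^\star(T)=\eta^\star(T)=0$ and the vanishing initial traces $U(0)=V(0)=0$. In space, the Neumann conditions on $\lambda^\star,\eta^\star,U,V$, together with $\partial\tilde v/\partial\mathbf{n}=0$, ensure that all boundary terms drop out, leaving precisely the expressions that appear in (\ref{M2})--(\ref{M3}). The regularities $\lambda^\star\in X_2$, $\eta^\star\in X_{5/3}$, $\tilde u\in X_2$, $\tilde v\in X_4$, $U\in X_2$, $V\in X_4$ make every resulting product integrable, as already witnessed by the estimates in the proof of Lemma~\ref{regular-lemma}, so no density argument is required.

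For uniqueness, if $(\lambda_1,\eta_1)$ and $(\lambda_2,\eta_2)$ are two very weak solutions, the differences $\delta\lambda:=\lambda_1-\lambda_2$ and $\delta\eta:=\eta_1-\eta_2$ satisfy the homogeneous versions of (\ref{M2})--(\ref{M3}). Adding those two identities and recognizing the integrands via (\ref{C8}) leads to
\begin{equation*}
\int_0^T\!\!\int_\Omega G_1'(\tilde s)[(U,V,0)]\,\delta\lambda \,+\, \int_0^T\!\!\int_\Omega G_2'(\tilde s)[(U,V,0)]\,\delta\eta \,=\, 0
\end{equation*}
for every $(U,V)\in\mathcal{W}_{u_0}\times\mathcal{W}_{v_0}$. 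Lemma~\ref{regular-lemma}, applied with vanishing initial data $U_0=V_0=0$, guarantees that the map $(U,V)\mapsto(G_1'(\tilde s)[(U,V,0)],G_2'(\tilde s)[(U,V,0)])$ is surjective from $\mathcal{W}_{u_0}\times\mathcal{W}_{v_0}$ onto $L^2(Q)\times L^4(Q)$. Sweeping $(g_u,g_v)$ through this whole target space and invoking $L^4(Q)=L^{4/3}(Q)'$ yields $\delta\lambda=0$ in $L^2(Q)$ and $\delta\eta=0$ in $L^{4/3}(Q)$. Combining both steps identifies $(\lambda,\eta)$ with $(\lambda^\star,\eta^\star)$ and delivers the claimed regularity. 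The only delicate point is the careful bookkeeping of the integrations by parts on the cross terms $\nabla\cdot(U\nabla\tilde v)$ and $\nabla\cdot(\tilde u\nabla V)$, but this is routine given the integrability already controlled in Lemma~\ref{regular-lemma} and Theorem~\ref{regularity_lagrange}.
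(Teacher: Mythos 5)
Your proposal is correct and follows essentially the same route as the paper: identify the multiplier with the strong adjoint solution of Theorem \ref{regularity_lagrange} by showing that strong solution satisfies the very weak identities (\ref{M2})--(\ref{M3}) and then testing the difference against solutions of the linearized system (\ref{C9}) provided by Lemma \ref{regular-lemma} with zero initial data. The only inessential difference is that the paper makes the single choice $g_u=\lambda-\overline{\lambda}$, $g_v={\rm sgn}(\eta-\overline{\eta})|\eta-\overline{\eta}|^{1/3}$, which yields $\|\lambda-\overline{\lambda}\|^2_{L^2(Q)}+\|\eta-\overline{\eta}\|^{4/3}_{L^{4/3}(Q)}=0$ at once, whereas you sweep $(g_u,g_v)$ over all of $L^2(Q)\times L^4(Q)$ and conclude by duality.
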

\begin{proof}
Let $(\lambda,\eta)$ be the Lagrange multiplier given in Theorem \ref{lagrange}, which is a very weak solution of problem (\ref{M4}). 
In particular, $(\lambda,\eta)$ satisfies (\ref{M2})-(\ref{M3}).

On the other hand, from Theorem \ref{regularity_lagrange}, system (\ref{M4}) has a unique solution $(\overline{\lambda},\overline{\eta})\in X_2\times X_{5/3}$. Then, it suffices to identify
$(\lambda,\eta)$ with $(\overline{\lambda},\overline{\eta})$. With this objective, we consider the unique solution $(U,V)\in \mathcal{W}_{u}\times\mathcal{W}_v$ of linear system 
(\ref{C9}) for $g_u:=\lambda-\overline{\lambda}\in L^2(Q)$ and $g_v:={\rm sgn}(\eta-\overline{\eta})|\eta-\overline{\eta}|^{1/3}\in L^4(Q)$ (see Lemma \ref{regular-lemma} and Remark \ref{uni-lineal}). 
Then, written (\ref{M4}) for $(\overline{\lambda},\overline{\eta})$ (instead of $(\lambda,\eta)$), testing the first equation by $U$, and the second one by $V$, and  integrating by parts in $\Omega$, we obtain
\begin{equation}\label{RL2}
\int_0^T\int_\Omega\bigg(\partial_tU-\Delta U-\nabla\cdot(U\nabla\tilde{v})\bigg)\overline{\lambda}-\int_0^T\int_\Omega U\overline{\eta}
=\alpha_u\int_0^T\int_{\Omega}{\rm sgn}(\tilde{u}-u_d)|\tilde{u}-u_d|^{13/7}U,
\end{equation}
\begin{equation}\label{RL3}
\int_0^T\int_\Omega\bigg(\partial_tV-\Delta V+V-\tilde{f}V\chi{_{_{\Omega_c}}}\bigg)\overline{\eta}-\int_0^T\int_{\Omega}\nabla\cdot(\tilde{u}\nabla V)\overline{\lambda}
=\alpha_v\int_0^T\int_{\Omega}(\tilde{v}-v_d)V.
\end{equation}
Making the difference between (\ref{M2}) for $(\lambda,\eta)$ and (\ref{RL2}) for $(\overline{\lambda},\overline{\eta})$, and between (\ref{M3}) and (\ref{RL3}), and then adding the respective 
equations, since the right-hand side terms vanish, we have
\begin{eqnarray}\label{RL4}
\int_0^T\int_\Omega\bigg(\partial_tU-\Delta U-\nabla\cdot(U\nabla\tilde{v})-\nabla\cdot(\tilde{u}\nabla V)\bigg)(\lambda-\overline{\lambda})\nonumber\\
+\int_0^T\int_\Omega\bigg(\partial_tV-\Delta V+V-U-\tilde{f}V\chi_{_{\Omega_c}}\bigg)(\eta-\overline{\eta})=0.
\end{eqnarray}
Therefore, taking into account that $(U,V)$ is the unique solution of (\ref{C9}) for $g_u=\lambda-\overline{\lambda}$ and $g_v={\rm sgn}(\eta-\overline{\eta})|\eta-\overline{\eta}|^{1/3}$, from (\ref{RL4}) we deduce 
$$
\|\lambda-\overline{\lambda}\|^2_{L^2(Q)}+\|\eta-\overline{\eta}\|^{4/3}_{L^{4/3}(Q)}=0,
$$
which implies that $(\lambda,\eta)=(\overline{\lambda},\overline{\eta})$ in $L^2(Q)\times L^{4/3}(Q)$. As a consequence of the regularity of $(\overline{\lambda},\overline{\eta})$ we deduce that
$(\lambda,\eta)\in X_2\times X_{5/3}$.
\end{proof}

\begin{corol}(Optimality System)
Let $\tilde{s}=(\tilde{u},\tilde{v},\tilde{f})\in\mathcal{S}_{ad}$ be a local optimal solution for the control problem (\ref{func}). Then, the Lagrange multiplier
$(\lambda,\eta)\in X_2\times X_{5/3}$ satisfies the  optimality system
\begin{equation}\label{R5}
\left\{
\begin{array}{rcl}
\partial_t\lambda+\Delta\lambda-\nabla\lambda\cdot\nabla\tilde{v}+\eta&=&-\alpha_u{\rm sgn}(\tilde{u}-u_d)|\tilde{u}-u_d|^{13/7}\ \ \mbox{ a.e. }(t,x)\in Q,\\
\partial_t\eta+\Delta\eta+\nabla\cdot(\tilde{u}\nabla\lambda)-\eta+\tilde{f}\,\eta\,\chi_{_{\Omega_c}}&=&-\alpha_v(\tilde{v}-v_d)\ \ \mbox{ a.e. }(t,x)\in Q,\\
\lambda(T)=0,\ \eta(T)&=&0\quad  \mbox{ in }\Omega,\\
\dfrac{\partial\lambda}{\partial{\bf n}}=0,\ \dfrac{\partial\eta}{\partial{\bf n}}&=&0\quad\mbox{ on }(0,T)\times\partial\Omega,\\
\displaystyle\int_0^T\int_{\Omega_c}(\alpha_f(\tilde{f})^3+\tilde{v}\,\eta)(f-\tilde{f})&\ge&0\ \ \ \forall f\in\mathcal{F}.
\end{array}
\right.
\end{equation}
\end{corol}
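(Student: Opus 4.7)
The plan is to upgrade the very weak formulation \eqref{M2}--\eqref{M3} to the pointwise adjoint system \eqref{R5} by exploiting the extra regularity $(\lambda,\eta)\in X_2\times X_{5/3}$ provided by Theorem \ref{regularity-multiplier}, and then to simply rewrite the variational inequality \eqref{M5} as the last line of \eqref{R5}.

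First I would fix $U\in\mathcal{W}_{u_0}$ and integrate by parts in time and space in \eqref{M2}. Since $\lambda\in X_2\hookrightarrow C([0,T];H^1)$ with $\lambda(T)=0$ (this final-time condition will be identified as part of the argument), we get $\int_0^T\!\int_\Omega \partial_tU\,\lambda = -\int_0^T\!\int_\Omega U\,\partial_t\lambda - \int_\Omega U(0)\lambda(0) + \int_\Omega U(T)\lambda(T)$, and since $U(0)=0$, the boundary-in-time terms disappear provided $\lambda(T)=0$. Similarly, using $\Delta\lambda\in L^2(Q)$ and $\partial_{\bf n}U=0$, $\int_0^T\!\int_\Omega(-\Delta U)\lambda = -\int_0^T\!\int_\Omega U\Delta\lambda + \int_0^T\!\int_{\partial\Omega}U\,\partial_{\bf n}\lambda$. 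To make this last boundary integral vanish for arbitrary $U\in\mathcal{W}_{u_0}$, I must impose $\partial_{\bf n}\lambda=0$ on $(0,T)\times\partial\Omega$. Finally, for the chemotactic term $\int_0^T\!\int_\Omega\nabla\cdot(U\nabla\tilde v)\lambda = -\int_0^T\!\int_\Omega U\nabla\lambda\cdot\nabla\tilde v$ (using $\partial_{\bf n}U=0$ and $\partial_{\bf n}\tilde v=0$). Collecting these, \eqref{M2} becomes
\begin{equation*}
\int_0^T\!\int_\Omega\bigl(-\partial_t\lambda-\Delta\lambda+\nabla\lambda\cdot\nabla\tilde v-\eta\bigr)U = \alpha_u\int_0^T\!\int_\Omega{\rm sgn}(\tilde u-u_d)|\tilde u-u_d|^{13/7}U
\end{equation*}
for all $U\in\mathcal{W}_{u_0}$. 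By density of such $U$ in $L^2(Q)$, we recover \eqref{R5}$_1$ pointwise a.e.~in $Q$, together with the boundary condition $\partial_{\bf n}\lambda=0$ and the final-time condition $\lambda(T)=0$.

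Next, analogously for \eqref{M3} with $V\in\mathcal{W}_{v_0}$, using $\eta\in X_{5/3}\hookrightarrow C([0,T];\widehat W^{4/5,5/3})$: integration by parts in time yields the final condition $\eta(T)=0$, spatial integration by parts in the $-\Delta V$ term (together with $\partial_{\bf n}V=0$) forces $\partial_{\bf n}\eta=0$ on $(0,T)\times\partial\Omega$, and the coupling term is handled by $-\int_0^T\!\int_\Omega\nabla\cdot(\tilde u\nabla V)\lambda = \int_0^T\!\int_\Omega\nabla\cdot(\tilde u\nabla \lambda)V$ (using $\partial_{\bf n}V=0$, $\partial_{\bf n}\lambda=0$ just established, and integrating by parts twice). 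This yields
\begin{equation*}
\int_0^T\!\int_\Omega\bigl(-\partial_t\eta-\Delta\eta-\nabla\cdot(\tilde u\nabla\lambda)+\eta-\tilde f\,\eta\,\chi_{_{\Omega_c}}\bigr)V = \alpha_v\int_0^T\!\int_\Omega(\tilde v-v_d)V
\end{equation*}
for all $V\in\mathcal{W}_{v_0}$. By density we recover \eqref{R5}$_2$ a.e.~in $Q$, with the boundary and final-time conditions already identified. The regularity $X_2\times X_{5/3}$ guarantees that each term in the two adjoint equations lies in $L^2(Q)$ and $L^{5/3}(Q)$, respectively, so equating them pointwise a.e.~is legitimate.

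Finally, \eqref{M5} is literally the last line of \eqref{R5}, so no further work is needed there. I expect the main subtlety to be the bookkeeping of boundary and initial/final conditions: since $\mathcal{W}_{u_0}$ and $\mathcal{W}_{v_0}$ encode $U(0)=V(0)=0$ and $\partial_{\bf n}U=\partial_{\bf n}V=0$ but leave $U(T)$, $V(T)$, $U|_{\partial\Omega}$ and $V|_{\partial\Omega}$ free, the variational identities force precisely $\lambda(T)=\eta(T)=0$ and $\partial_{\bf n}\lambda=\partial_{\bf n}\eta=0$ after integration by parts. The pointwise PDEs in $Q$ then follow from density of $\mathcal{W}_{u_0}$ and $\mathcal{W}_{v_0}$ in the relevant Lebesgue duals, which is standard.
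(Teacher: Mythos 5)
Your argument is correct, but it follows a different (and more laborious) route than the paper. In the paper, the Corollary needs no integration by parts at all: Theorem \ref{regularity_lagrange} constructs the unique solution $(\overline{\lambda},\overline{\eta})\in X_2\times X_{5/3}$ of the adjoint system \eqref{M4}, which by construction satisfies \eqref{R5}$_1$--\eqref{R5}$_4$ pointwise (including the final-time and Neumann conditions), and Theorem \ref{regularity-multiplier} identifies the Lagrange multiplier $(\lambda,\eta)$ with $(\overline{\lambda},\overline{\eta})$ via a duality argument with the linearized problem \eqref{C9}; the pointwise system is then immediate, and \eqref{R5}$_5$ is just \eqref{M5}. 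You instead take from Theorem \ref{regularity-multiplier} only the regularity statement $(\lambda,\eta)\in X_2\times X_{5/3}$ and re-derive the pointwise equations, the conditions $\lambda(T)=\eta(T)=0$ and $\partial_{\bf n}\lambda=\partial_{\bf n}\eta=0$, from the very weak identities \eqref{M2}--\eqref{M3} by integration by parts and density of the test classes $\mathcal{W}_{u_0}$, $\mathcal{W}_{v_0}$. This works, but note two things. First, it is redundant: the only way the paper obtains that regularity is precisely by identifying $(\lambda,\eta)$ with the strong solution of \eqref{M4}, so at the moment you invoke Theorem \ref{regularity-multiplier} the conclusion you are re-proving is already available for free; your route also leans on trace-surjectivity/density facts (arbitrariness of $U(T)$, $V(T)$ and of the lateral traces within $\mathcal{W}_{u_0}$, $\mathcal{W}_{v_0}$) that you leave as ``standard''. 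Second, your intermediate identity for the coupling term has a sign slip: with $\partial_{\bf n}V=0$ and $\partial_{\bf n}\lambda=0$ one has $-\int_0^T\!\int_\Omega\nabla\cdot(\tilde u\nabla V)\lambda=-\int_0^T\!\int_\Omega\nabla\cdot(\tilde u\nabla\lambda)V$ (not $+$); your final displayed equation already carries the correct sign, so the conclusion is unaffected, but the intermediate formula as written is inconsistent with it. What your approach buys is a self-contained verification, directly from the multiplier's variational characterization, that the extra regularity forces the pointwise adjoint system; what the paper's approach buys is economy, since uniqueness for \eqref{C9} does the identification in one stroke.
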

\begin{remark}
If there is no convexity constraint on the control, that is, $\mathcal{F}\equiv L^4(Q_c)$, then (\ref{R5})$_5$ becomes 
$$
\alpha_f(\tilde{f})^3\chi_{_{\Omega_c}}+\tilde{v}\,\eta\,\chi_{_{\Omega_c}}=0.
$$
Thus, the control $\tilde{f}$ is given by 
$$
\tilde{f}=\left(-\frac{1}{\alpha_f}\tilde{v}\,\eta\right)^{1/3}\chi_{_{\Omega_c}}.
$$
\end{remark}
\addcontentsline{toc}{section}{Appendix: Existence of Strong Solutions of Problem (\ref{regg-1})}

\section*{Appendix: Existence of Strong Solutions of Problem (\ref{regg-1})}
\label{appendix}

In this appendix we will prove  Theorem \ref{teo_reg}.

Let us  introduce the {\it weak} space
\begin{equation*}\label{reg3}
\mathcal{X}:= L^\infty(L^2)\cap L^2(H^1).
\end{equation*}

We define the operator $R:\mathcal{X}\times\mathcal{X}\rightarrow X_{5/3}\times X_{10/3}\hookrightarrow\mathcal{X}\times\mathcal{X}$
by $R(\overline{u}^\varepsilon,\overline{z}^\varepsilon)=(u^\varepsilon,z^\varepsilon)$ the solution of the decoupled linear problem

\begin{equation}\label{regg-2}
\left\{
\begin{array}{rcl}
\partial_tu^\varepsilon-\Delta u^\varepsilon&=&\nabla\cdot(\overline{u}_+^\varepsilon\nabla v(\overline{z}^\varepsilon))\quad \mbox{ in }Q,\\
\partial_tz^\varepsilon-\Delta z^\varepsilon&=&\overline{u}^\varepsilon+f\,v(\overline{z}^\varepsilon)_+\chi_{_{\Omega_c}}-\overline{z}^\varepsilon\quad \mbox{ in }Q,\\
u^\varepsilon(0)=u_0^\varepsilon,\ z^\varepsilon(0)&=&v_0^\varepsilon-\varepsilon\Delta v_0^\varepsilon\quad \mbox{ in }\Omega,\\
\dfrac{\partial u^\varepsilon}{\partial {\bf n}}=0,\ \dfrac{\partial z^\varepsilon}{\partial{\bf n}}&=&0 \quad \mbox{ on }(0,T)\times\partial\Omega,
\end{array}
\right.
\end{equation}
where $\overline{v}^\varepsilon:=v(\overline{z}^\varepsilon)$ is the unique solution of problem (\ref{regg-1-1}). In this Appendix, we will denote 
$v(\overline{z}^\varepsilon)$ only by $\overline{v}^\varepsilon$. Then, a solution of system (\ref{regg-1}) is a fixed point of
$R$. Therefore, in order to prove the existence of  solution to  system (\ref{regg-1}) we will use the Leray-Schauder fixed point theorem. In the following lemmas, we will prove the hypotheses 
of such fixed point theorem.

\begin{lemma}\label{compact}
The operator $R:\mathcal{X}\times\mathcal{X}\rightarrow \mathcal{X}\times\mathcal{X}$ is well defined and compact.
\end{lemma}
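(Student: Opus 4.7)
The approach is to solve the two decoupled linear parabolic equations in (\ref{regg-2}) by maximal regularity (Lemma~\ref{feireisl}), using the smoothing in the auxiliary elliptic problem (\ref{regg-1-1}) to upgrade $\overline v^\varepsilon=v(\overline z^\varepsilon)$, and then to combine the resulting boundedness $R:\mathcal X\times\mathcal X\to X_{5/3}\times X_{10/3}$ with the compact embedding $X_p\hookrightarrow\hookrightarrow\mathcal X$ to conclude compactness.

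As a preliminary step, given $(\overline u^\varepsilon,\overline z^\varepsilon)\in\mathcal X\times\mathcal X$, I would extract the regularity of $\overline v^\varepsilon$ from the standard Neumann $L^p$-elliptic theory applied to (\ref{regg-1-1}): $\overline z^\varepsilon\in L^p(L^q)\Rightarrow\overline v^\varepsilon\in L^p(W^{2,q})$ with constants depending on $\varepsilon$. Interpolating $\mathcal X=L^\infty(L^2)\cap L^2(H^1)\subset L^\infty(L^2)\cap L^2(L^6)$ via Lemma~\ref{l3} places $\overline z^\varepsilon$ in $L^{10/3}(Q)\cap L^{10}(L^{30/13})$, so that $\Delta\overline v^\varepsilon\in L^{10/3}(Q)$, $\nabla\overline v^\varepsilon\in L^{10}(W^{1,30/13})\subset L^{10}(Q)$ via the 3D embedding $W^{1,30/13}\hookrightarrow L^{10}$, and $v(\overline z^\varepsilon)\in L^\infty(Q)$ via $H^2\hookrightarrow L^\infty$.

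With these in hand, the two equations in (\ref{regg-2}) decouple. The source $\overline u^\varepsilon+f\,v(\overline z^\varepsilon)_+\chi_{_{\Omega_c}}-\overline z^\varepsilon$ of the $z^\varepsilon$-equation belongs to $L^{10/3}(Q)$ (since $\overline u^\varepsilon,\overline z^\varepsilon\in L^{10/3}(Q)$, $v(\overline z^\varepsilon)_+\in L^\infty(Q)$, and $f\in L^4(Q_c)\subset L^{10/3}(Q_c)$), and Lemma~\ref{feireisl} with $p=10/3$ gives $z^\varepsilon\in X_{10/3}$. For the $u^\varepsilon$-equation I expand $\nabla\cdot(\overline u^\varepsilon_+\nabla\overline v^\varepsilon)=\overline u^\varepsilon_+\Delta\overline v^\varepsilon+\nabla\overline u^\varepsilon_+\cdot\nabla\overline v^\varepsilon$; the first summand is a H\"older product of two $L^{10/3}(Q)$-functions and hence lies in $L^{5/3}(Q)$, while the second pairs $\nabla\overline u^\varepsilon\in L^2(Q)$ with $\nabla\overline v^\varepsilon\in L^{10}(Q)$, again landing in $L^{5/3}(Q)$ since $\tfrac{1}{2}+\tfrac{1}{10}=\tfrac{3}{5}$. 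Lemma~\ref{feireisl} with $p=5/3$ then yields $u^\varepsilon\in X_{5/3}$, and the chain of estimates shows that $R$ is well defined and sends bounded sets of $\mathcal X\times\mathcal X$ into bounded sets of $X_{5/3}\times X_{10/3}$.

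The compactness of $R$ then follows by combining this boundedness with the compact embedding $X_p\hookrightarrow\hookrightarrow L^\infty(L^2)\cap L^2(H^1)=\mathcal X$ for $p\in\{5/3,10/3\}$, coming from the Aubin--Lions lemma (\cite[Th\'eor\`eme~5.1]{lions}, \cite[Corollary~4]{simon}). Continuity of $R$ is automatic from the linear structure of (\ref{regg-1-1})--(\ref{regg-2}) together with the Lipschitz character of $(\cdot)_+$: strong convergence of the data in $\mathcal X\times\mathcal X$ produces convergence of the right-hand sides in the same $L^{5/3}(Q)$ and $L^{10/3}(Q)$ topologies used above, which Lemma~\ref{feireisl} transfers to $X_{5/3}\times X_{10/3}$, a fortiori to $\mathcal X\times\mathcal X$. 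The delicate step in the whole argument is controlling $\nabla\overline u^\varepsilon_+\cdot\nabla\overline v^\varepsilon$ in $L^{5/3}(Q)$: since $\nabla\overline u^\varepsilon$ has only the minimal $L^2(Q)$ regularity, its partner $\nabla\overline v^\varepsilon$ must be boosted all the way to $L^{10}(Q)$, and this is what forces the sharp interpolation $\mathcal X\subset L^{10}(L^{30/13})$ together with the just-barely-3D Sobolev embedding $W^{1,30/13}\hookrightarrow L^{10}$; every other pairing is then routine H\"older.
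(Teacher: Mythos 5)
Your construction of $R$ and the boundedness estimates are in substance the paper's own argument: you decouple (\ref{regg-2}), use the $\varepsilon$-regularization (\ref{regg-1-1}) to lift $\overline{v}^\varepsilon=v(\overline{z}^\varepsilon)$, check that the right-hand sides belong to $L^{5/3}(Q)$ and $L^{10/3}(Q)$, and apply Lemma \ref{feireisl} with $p=5/3$ and $p=10/3$, arriving at the analogues of (\ref{com-1})--(\ref{com-2}). Your route to $\nabla\overline{v}^\varepsilon\in L^{10}(Q)$ and $\Delta\overline{v}^\varepsilon\in L^{10/3}(Q)$ via $L^p(W^{2,q})$ Neumann elliptic regularity and $W^{1,30/13}(\Omega)\hookrightarrow L^{10}(\Omega)$ is a harmless variant of the paper's $\overline{v}^\varepsilon\in L^\infty(H^2)\cap L^2(H^3)$ followed by interpolation. (Continuity of $R$ is not part of this lemma -- the paper treats it separately in Lemma \ref{conti} -- and your one-line Lipschitz justification would anyway need care for the map $\overline{u}\mapsto\nabla(\overline{u}_+)$; but this does not affect the statement at hand.)

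The genuine gap is the compactness step. You assert that $X_{5/3}\hookrightarrow\hookrightarrow\mathcal{X}$ ``comes from the Aubin--Lions lemma'', but Aubin--Lions applied to the defining bounds of $X_{5/3}$, namely $u\in L^{5/3}(W^{2,5/3})$ with $\partial_t u\in L^{5/3}(Q)$, only yields relative compactness in spaces with time integrability $5/3$; it gives neither compactness in $L^2(H^1)$ nor any control of the $L^\infty(L^2)$ component of $\mathcal{X}$, and these are exactly what the norm of $\mathcal{X}$ requires. The missing ingredient is an interpolation that raises the regularity before compactness is invoked: from $u\in X_{5/3}$ one has, by Lemma \ref{l6}, $u\in L^\infty(H^{1/2})\cap L^{5/3}(H^{17/10})$ (using $W^{4/5,5/3}(\Omega)\hookrightarrow H^{1/2}(\Omega)$ and $W^{2,5/3}(\Omega)\hookrightarrow H^{17/10}(\Omega)$), and then, by Lemma \ref{l7}, $u\in L^2(H^{3/2})$ as in (\ref{compact-1})--(\ref{compact-2}); only now does the Aubin--Lions lemma, with the compact embedding $H^{3/2}(\Omega)\hookrightarrow H^1(\Omega)$ and $\partial_t u\in L^{5/3}(Q)$, give relative compactness in $L^2(H^1)$, while the $L^\infty(L^2)$ part uses the bound in $C([0,T];H^{1/2})$ together with a Simon-type argument (\cite[Corollary 4]{simon}). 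Since $X_{10/3}\hookrightarrow X_{5/3}$, it suffices to do this for $p=5/3$. You should insert this interpolation argument; with it, your proof coincides with the paper's.
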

\begin{proof}
Let $(\overline{u}^\varepsilon,\overline{z}^\varepsilon)\in\mathcal{X}\times\mathcal{X}$. Then, from the $H^2$ and $H^3$-regularity of problem (\ref{regg-1-1}) 
(see \cite[Theorem 2.4.2.7 and Theorem 2.5.11]{grisvard} respectively) we have $\overline{v}^\varepsilon\in L^\infty(H^2)\cap L^2(H^3)$.
Thus, we conclude that $\nabla\overline{v}^\varepsilon\in L^\infty(H^1)\cap L^2(H^2)\hookrightarrow L^{10}(Q)$, and taking into account 
that $(\overline{u}^\varepsilon,\overline{z}^\varepsilon)\in\mathcal{X}\times\mathcal{X}$, we have 
$\nabla\cdot(\overline{u}^\varepsilon_+\nabla \overline{v}^\varepsilon)=\overline{u}^\varepsilon_+\Delta \overline{v}^\varepsilon+\nabla\overline{u}^\varepsilon_+\cdot\nabla \overline{v}^\varepsilon\in L^{{5}/{3}}(Q)$. 
Then, by Lemma \ref{feireisl} (for $p=5/3$),  there exists a unique  solution $u^\varepsilon \in X_{5/3}$ of (\ref{regg-2})$_1$   such that
\begin{equation}\label{com-1}
\|u^\varepsilon\|_{X_{5/3}}\le C(\|u^\varepsilon_0\|_{W^{4/5,5/3}},\|\overline{u}^\varepsilon\|_{\mathcal{X}},\|\overline{z}^\varepsilon\|_{\mathcal{X}}).
\end{equation}

Now, since $\mathcal{X}\hookrightarrow L^{10/3}(Q)$ and $\overline{v}^\varepsilon\in L^\infty(Q)$, we have  
$\overline{u}^\varepsilon+f\,\overline{v}^\varepsilon_+\chi_{_{\Omega_c}}-\overline{z}^\varepsilon\in L^{10/3}(Q)$. Then, by Lemma \ref{feireisl}
(for $p={10}/{3}$), there exists a unique solution $z^\varepsilon$ of (\ref{regg-2})$_2$ belonging to $X_{10/3}$  such that
\begin{equation}\label{com-2}
\|z^\varepsilon\|_{X_{10/3}}\le C(\|z_0^\varepsilon\|_{W_{\bf n}^{{7}/{5},{10}/{3}}},\|\overline{u}^\varepsilon\|_{\mathcal{X}},\|\overline{z}^\varepsilon\|_{\mathcal{X}},\|f\|_{L^4(Q)}).
\end{equation}

Therefore, $R$ is well defined. The compactness of $R$ is consequence of estimates (\ref{com-1}) and (\ref{com-2}), and the compact embedding 
$X_{5/3}\times X_{10/3}\hookrightarrow\mathcal{X}\times\mathcal{X}$. 
 Indeed,  it suffices to prove only the compact embedding $X_{5/3}\hookrightarrow\mathcal{X}$, because $X_{10/3}\hookrightarrow X_{5/3}$.
Let $u\in X_{5/3}$,  then from  Lemma \ref{l6}  we have $W^{4/5,5/3}(\Omega)\hookrightarrow H^{1/2}(\Omega)$ and $W^{2,5/3}(\Omega)\hookrightarrow H^{17/10}(\Omega)$; thus
\begin{equation}\label{compact-1}
u\in X_{5/3}\hookrightarrow L^\infty(H^{1/2})\cap L^{5/3}(H^{17/10}).
\end{equation}
 Then, from (\ref{compact-1}) and Lemma \ref{l7} (for $(p_1,s_1)=(\infty,1/2)$ and $(p_2,s_2)=(5/3,17/10)$) we deduce that
\begin{equation}\label{compact-2}
u\in L^\infty(H^{1/2})\cap L^{5/3}(H^{17/10})\hookrightarrow L^2(H^{3/2}).
\end{equation}
Therefore, since the embedding $H^{3/2}(\Omega)\hookrightarrow H^1(\Omega)$ is compact and $\partial_tu\in L^{5/3}(Q)$, from
\cite[Th\'eor\`eme 5.1, p. 58]{lions} and (\ref{compact-2}) we obtain that $X_{5/3}$ is compactly embedded in $\mathcal{X}$.
\end{proof}
\begin{lemma}\label{fix}
Let $(u^\varepsilon_0,v^\varepsilon_0-\varepsilon\Delta v_0^\varepsilon)\in W^{4/5,5/3}(\Omega)\times W^{7/5,10/3}_{{\bf n}}(\Omega)$ with $u_0^\varepsilon\ge 0$ in $\Omega$ 
and $f\in L^4(Q_c)$. Then, the fixed points of $\alpha R$ are bounded in $\mathcal{X}\times\mathcal{X}$, independently of $\alpha\in[0,1]$, with $u^\varepsilon\ge0$.
\end{lemma}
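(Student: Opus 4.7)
The plan is first to translate the fixed-point condition $(u^\varepsilon,z^\varepsilon)=\alpha R(u^\varepsilon,z^\varepsilon)$ into a PDE system. Since $R(\bar u,\bar z)=(u,z)$ is defined by \eqref{regg-2}, a fixed point of $\alpha R$ satisfies
\begin{equation*}
\left\{
\begin{array}{l}
\partial_tu^\varepsilon-\Delta u^\varepsilon=\alpha\,\nabla\cdot(u^\varepsilon_+\nabla v^\varepsilon),\\
\partial_tz^\varepsilon-\Delta z^\varepsilon+\alpha z^\varepsilon=\alpha u^\varepsilon+\alpha\,f\,v^\varepsilon_+\chi_{_{\Omega_c}},\\
u^\varepsilon(0)=\alpha u_0^\varepsilon,\quad z^\varepsilon(0)=\alpha(v_0^\varepsilon-\varepsilon\Delta v_0^\varepsilon),
\end{array}
\right.
\end{equation*}
together with $v^\varepsilon-\varepsilon\Delta v^\varepsilon=z^\varepsilon$ and Neumann boundary conditions. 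The case $\alpha=0$ gives $(u^\varepsilon,z^\varepsilon)\equiv(0,0)$ trivially, so we take $\alpha\in(0,1]$.

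For the non-negativity of $u^\varepsilon$, I would test the $u^\varepsilon$-equation with $u^\varepsilon_-=\min\{u^\varepsilon,0\}\le 0$. Since $u^\varepsilon_+\nabla u^\varepsilon_-\equiv 0$ pointwise, the chemotactic contribution vanishes, and
$$
\frac12\frac{d}{dt}\|u^\varepsilon_-\|^2+\|\nabla u^\varepsilon_-\|^2=-\alpha\int_\Omega u^\varepsilon_+\,\nabla v^\varepsilon\cdot\nabla u^\varepsilon_-=0.
$$
Using $u^\varepsilon_-(0)=0$ because $u_0^\varepsilon\ge 0$ gives $u^\varepsilon\ge 0$, hence $u^\varepsilon_+=u^\varepsilon$. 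Integrating the $u^\varepsilon$-equation in $\Omega$ also yields $\|u^\varepsilon(t)\|_{L^1}=\alpha\|u_0^\varepsilon\|_{L^1}$, providing a uniform $L^\infty(L^1)$ bound.

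For the uniform-in-$\alpha$ bound in $\mathcal{X}\times\mathcal{X}$, I would use the entropy method typical for chemo-repulsion. Testing the $u^\varepsilon$-equation with $\log(u^\varepsilon+1)$ and setting $\Phi(s)=(s+1)\log(s+1)-s$ and $F(s)=s-\log(s+1)$ (so that $0\le F(s)\le s$), integration by parts gives
$$
\frac{d}{dt}\int_\Omega\Phi(u^\varepsilon)+4\|\nabla\sqrt{u^\varepsilon+1}\|^2=\alpha\int_\Omega F(u^\varepsilon)\,\Delta v^\varepsilon.
$$
In parallel, because $z^\varepsilon=v^\varepsilon-\varepsilon\Delta v^\varepsilon$, testing the $z^\varepsilon$-equation by $v^\varepsilon$ and by $-\Delta v^\varepsilon$ yields, after integration by parts,
$$
\frac12\frac{d}{dt}\bigl(\|v^\varepsilon\|^2+\varepsilon\|\nabla v^\varepsilon\|^2\bigr)+\|\nabla v^\varepsilon\|^2+\varepsilon\|\Delta v^\varepsilon\|^2+\alpha\bigl(\|v^\varepsilon\|^2+\varepsilon\|\nabla v^\varepsilon\|^2\bigr)=\alpha(u^\varepsilon,v^\varepsilon)+\alpha(f\,v^\varepsilon\chi_{_{\Omega_c}},v^\varepsilon),
$$
and a parallel identity with $\|\Delta v^\varepsilon\|^2$ and $\sqrt{\varepsilon}\|\nabla\Delta v^\varepsilon\|$ controls on the left. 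Summing the three identities produces the target energy inequality. The $f$-term is absorbed using $f\in L^4(Q_c)$, the interpolation \eqref{interpol}, and Young's inequality; the forcing $\alpha(u^\varepsilon,v^\varepsilon)$ is handled via Cauchy-Schwarz and $\|u^\varepsilon\|_{L^1}\le\|u_0^\varepsilon\|_{L^1}$.

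The main obstacle I expect is the cross term $\alpha\int F(u^\varepsilon)\Delta v^\varepsilon$, which couples the $u$- and $v$-dissipations. My plan to tame it is: estimate $F(u^\varepsilon)\le u^\varepsilon$, then use that $\nabla\sqrt{u^\varepsilon+1}\in L^2(Q)$ together with $u^\varepsilon+1\in L^\infty(L^1)$ yields (by Gagliardo-Nirenberg) $\sqrt{u^\varepsilon+1}\in L^2(L^6)\cap L^\infty(L^2)\hookrightarrow L^{10/3}(Q)$, hence $u^\varepsilon\in L^{5/3}(Q)$ and, more sharply, $u^\varepsilon\in L^{8}(L^{12/5})$; then Young's inequality lets me bound $\alpha|\int F(u^\varepsilon)\Delta v^\varepsilon|\le \delta\|\Delta v^\varepsilon\|^2+C_\delta\|u^\varepsilon\|^2$ and to absorb $\delta\|\Delta v^\varepsilon\|^2$ into the dissipation on the left-hand side. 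A final application of Grönwall's lemma, with constants depending only on $\|u_0^\varepsilon\|_{L^1}$, $\|v_0^\varepsilon-\varepsilon\Delta v_0^\varepsilon\|_{H^1}$ and $\|f\|_{L^4(Q_c)}$, yields a bound on $(u^\varepsilon,z^\varepsilon)$ in $\mathcal{X}\times\mathcal{X}$ that is independent of $\alpha\in[0,1]$, as required.
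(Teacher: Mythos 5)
Your fixed-point reformulation, the positivity argument via $u^\varepsilon_-$ and the $L^1$-conservation coincide with the paper and are fine. The proof breaks down precisely at the point you flag as the main obstacle. You propose to estimate the entropy cross term by Young's inequality, $\alpha|\int_\Omega F(u^\varepsilon)\Delta v^\varepsilon|\le\delta\|\Delta v^\varepsilon\|^2+C_\delta\|u^\varepsilon\|^2$, and the analogous term $-\alpha\int_\Omega u^\varepsilon\Delta v^\varepsilon$ coming from testing the $z^\varepsilon$-equation with $-\Delta v^\varepsilon$ is not addressed at all. This cannot be closed: $\|u^\varepsilon(t)\|^2_{L^2}$ is controlled neither by the entropy $\int_\Omega(u^\varepsilon+1)\ln(u^\varepsilon+1)$ nor by any quantity on the left of your inequality, so Gronwall does not apply. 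Moreover, your auxiliary bound $u^\varepsilon\in L^8(L^{12/5})$ is a slip: it is $\sqrt{u^\varepsilon+1}$ that lies in $L^\infty(L^2)\cap L^2(L^6)\hookrightarrow L^8(L^{12/5})$, which only gives $u^\varepsilon\in L^4(L^{6/5})\cap L^{5/3}(Q)$, in particular not $L^2(Q)$; and in any case these bounds are consequences of the very dissipation estimate you are deriving, so invoking them there is circular. The missing idea, which is the heart of the paper's identity (\ref{F-2}), is that the two dangerous cross terms must be combined \emph{before} estimating: in gradient form they are $-\alpha\int_\Omega\frac{u^\varepsilon}{u^\varepsilon+1}\nabla u^\varepsilon\cdot\nabla v^\varepsilon$ (chemotaxis term tested with $\ln(u^\varepsilon+1)$) and $+\alpha\int_\Omega\nabla u^\varepsilon\cdot\nabla v^\varepsilon$ (production term tested with $-\Delta v^\varepsilon$), whose sum is $\alpha\int_\Omega\frac{1}{u^\varepsilon+1}\nabla u^\varepsilon\cdot\nabla v^\varepsilon\le 2\alpha\|\nabla\sqrt{u^\varepsilon+1}\|^2+\frac{\alpha}{2}\|\nabla v^\varepsilon\|^2$, absorbed (using $\alpha\le 1$) by the dissipation $4\|\nabla\sqrt{u^\varepsilon+1}\|^2+\alpha\|\nabla v^\varepsilon\|^2$ already present on the left. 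In your notation, $\alpha\int_\Omega F(u^\varepsilon)\Delta v^\varepsilon-\alpha\int_\Omega u^\varepsilon\Delta v^\varepsilon=-\alpha\int_\Omega\ln(u^\varepsilon+1)\Delta v^\varepsilon=\alpha\int_\Omega\frac{\nabla u^\varepsilon\cdot\nabla v^\varepsilon}{u^\varepsilon+1}$; treated separately, neither term is controllable with the available dissipation. (A related issue: handling $\alpha(u^\varepsilon,v^\varepsilon)$ by Cauchy--Schwarz again requires the $L^2$ bound on $u^\varepsilon$ you do not have; one must instead use $\|u^\varepsilon\|_{L^1}\|v^\varepsilon\|_{L^\infty}$ with $\|v^\varepsilon\|_{L^\infty}\le C\|v^\varepsilon\|_{H^2}$ absorbed, or, as the paper does, track $\int_\Omega v^\varepsilon$ through its ODE.)

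There is a second gap even if the energy inequality is repaired: Gronwall applied to it bounds $\int_\Omega(u^\varepsilon+1)\ln(u^\varepsilon+1)$, $\|\nabla\sqrt{u^\varepsilon+1}\|_{L^2(Q)}$ and $v^\varepsilon$ in $L^\infty(H^2)\cap L^2(H^3)$ (hence $z^\varepsilon=v^\varepsilon-\varepsilon\Delta v^\varepsilon$ in $\mathcal{X}$), but it does \emph{not} bound $u^\varepsilon$ in $\mathcal{X}=L^\infty(L^2)\cap L^2(H^1)$, which is what the lemma asserts. The paper needs a further, separate step: once $\nabla v^\varepsilon$ is bounded in $L^\infty(L^4)$ (from the $L^\infty(H^2)$ bound on $v^\varepsilon$; the constant may depend on $\varepsilon$, which suffices since only $\alpha$-independence is required), one tests the $u^\varepsilon$-equation with $u^\varepsilon$, estimates $\alpha|(u^\varepsilon\nabla v^\varepsilon,\nabla u^\varepsilon)|\le\frac12\|u^\varepsilon\|^2_{H^1}+C\|\nabla v^\varepsilon\|^8_{L^4}\|u^\varepsilon\|^2$ via the interpolation inequality (\ref{interpol}), and applies Gronwall a second time. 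Without this additional $L^2$-estimate for $u^\varepsilon$, the claimed bound of the fixed points in $\mathcal{X}\times\mathcal{X}$ is not established.
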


\begin{proof}
We assume $\alpha\in (0,1]$. Notice that if $(u^\varepsilon,z^\varepsilon)$ is a fixed point of $\alpha R(u^\varepsilon,z^\varepsilon)$, then $(u^\varepsilon,z^\varepsilon)$ satisfies 
\begin{equation}\label{F-1}
\left\{
\begin{array}{rcl}
\partial_tu^\varepsilon-\Delta u^\varepsilon&=&\alpha\,\nabla\cdot({u}_+^\varepsilon\nabla {v}^\varepsilon)\quad \mbox{ in }Q,\\
\partial_tz^\varepsilon-\Delta z^\varepsilon&=&\alpha\,{u}^\varepsilon+\alpha\, f\,{v}^\varepsilon_+\chi_{_{\Omega_c}}-\alpha\,{z}^\varepsilon\quad \mbox{ in }Q,\\
u^\varepsilon(0)=u_0^\varepsilon,\ z^\varepsilon(0)&=&v_0^\varepsilon-\varepsilon\Delta v_0^\varepsilon\quad \mbox{ in }\Omega,\\
\dfrac{\partial u^\varepsilon}{\partial {\bf n}}=0,\ \dfrac{\partial z^\varepsilon}{\partial{\bf n}}&=&0\quad \mbox{ on }(0,T)\times\partial\Omega.
\end{array}
\right.
\end{equation}
The proof is carried out in three  steps:

\vspace{0.2cm}
\underline{Step 1:} $u^\varepsilon\ge 0$ and $\displaystyle\int_\Omega u(t)={m}^\varepsilon_0.$
\vspace{0.2cm}

\noindent Let $(u^\varepsilon,v^\varepsilon)$ be a solution of (\ref{F-1}), then $\partial_tu^\varepsilon$, $\Delta v^\varepsilon$ and $\nabla\cdot(u^\varepsilon_+\nabla v^\varepsilon)$ 
belong to $L^{5/3}(Q)$. Testing (\ref{F-1})$_1$  by $u^\varepsilon_-\in \mathcal{X}\hookrightarrow L^{{10}/{3}}(Q)\hookrightarrow L^{5/2}(Q)$, where $u^\varepsilon_-:=\min\{u^\varepsilon,0\}\le 0$, 
and taking into account that $u^\varepsilon_-=0$ if $u^\varepsilon\ge 0$; $\nabla u^\varepsilon_-=\nabla u^\varepsilon$ if $u^\varepsilon\le 0$,
and $\nabla u^\varepsilon_-=0$ if $u^\varepsilon>0$, we have 
$$
\frac12\frac{d}{dt}\|u^\varepsilon_-\|^2+\|\nabla u^\varepsilon_-\|^2=-\alpha(u_+^\varepsilon\nabla v^\varepsilon,\nabla u_-^\varepsilon)=0,
$$
which implies that $u^\varepsilon_-\equiv 0$ and, consequently, $u^\varepsilon\ge 0$ and, therefore, $u^\varepsilon_+=u^\varepsilon$.
Finally, integrating (\ref{F-1})$_1$ in $\Omega$
and using (\ref{reg1-2})$_1$ we obtain $\displaystyle\int_\Omega u^\varepsilon(t)={m}^\varepsilon_0$.
\vspace{0.2cm}

\underline{Step 2:} $z^\varepsilon$ is bounded in $\mathcal{X}$.
\vspace{0.2cm}

\noindent We observe that $u^\varepsilon+1\ge 1$ and  $u^\varepsilon+1 \in L^{\infty}(L^1)$. Then, in particular, $u^\varepsilon+1\in L^1(Q)$ and 
$$
\frac{2}{5}\ln(u^\varepsilon+1)=\ln(u^\varepsilon+1)^{2/{5}}\le (u^\varepsilon+1)^{{2}/{5}}\in L^{5/2}(Q),
$$
hence $\ln(u^\varepsilon+1)\in L^{5/2}(Q)$. 

Now, testing (\ref{F-1})$_1$ by $\ln(u^\varepsilon+1)\in L^{5/2}(Q)$ and (\ref{F-1})$_2$ by $-\Delta v^\varepsilon\in L^{10/3}(W^{2,10/3})$  (rewritten in terms of $v^\varepsilon$) we have
\begin{eqnarray}
&&\frac{d}{dt}\left(\int_\Omega(u^\varepsilon+1)\ln(u^\varepsilon+1)+\frac12\|\nabla v^\varepsilon\|^2+\frac\varepsilon2\|\Delta v^\varepsilon\|^2\right)
+4\|\nabla\sqrt{u^\varepsilon+1}\|^2\nonumber\\
&&\hspace{0.3cm}+\|\Delta v^\varepsilon\|^2+\alpha\|\nabla v^\varepsilon\|^2+\alpha\varepsilon\|\Delta v^\varepsilon\|^2+\varepsilon\|\nabla(\Delta v^\varepsilon)\|^2\nonumber\\
&&= -\alpha\int_\Omega\frac{u^\varepsilon}{u^\varepsilon+1}\nabla v^\varepsilon\cdot\nabla u^\varepsilon+\alpha\int_\Omega\nabla u^\varepsilon\cdot\nabla v^\varepsilon
-\alpha\int_\Omega f\,v^\varepsilon_+\chi_{_{\Omega_c}} \Delta v^\varepsilon\nonumber\\
&&=\alpha\int_\Omega\frac{1}{u^\varepsilon+1}\nabla u^\varepsilon\cdot\nabla v^\varepsilon-\alpha\int_\Omega f\,v^\varepsilon_+\chi_{_{\Omega_c}}\Delta v^\varepsilon.\label{F-2}
\end{eqnarray}
Applying  H\"older and Young inequalities, we have 
\begin{eqnarray}
\alpha\int_\Omega\frac{1}{u^\varepsilon+1}\nabla u^\varepsilon\cdot\nabla v^\varepsilon
&\le&\frac\alpha2\int_\Omega\frac{|\nabla u^\varepsilon|^2}{u^\varepsilon+1}+\frac{\alpha}{2}\int_\Omega\frac{|\nabla v^\varepsilon|^2}{u^\varepsilon+1}
\le2\alpha\|\nabla\sqrt{u^\varepsilon+1}\|^2+\frac\alpha2\|\nabla v^\varepsilon\|^2,\label{F-3}\\
-\alpha\int_\Omega f\,v^\varepsilon_+\chi_{_{\Omega_c}} \Delta v^\varepsilon
&\le&\alpha\|f\|_{L^4}\|v^\varepsilon\|_{L^4}\|\Delta v^\varepsilon\|\le\delta\|v^\varepsilon\|^2_{H^2}+\alpha^2C_\delta\|f\|_{L^4}^{2}\|v^\varepsilon\|^2_{H^1}.\label{F-4}
\end{eqnarray}

Moreover, integrating (\ref{F-1})$_2$ in $\Omega$, using (\ref{reg1-2}), and taking into account that $v^\varepsilon$ is the unique solution of the problem (\ref{regg-1-1}),
we have 
$$
\frac{d}{dt}\left(\int_\Omega v^\varepsilon\right)+\int_\Omega v^\varepsilon=\alpha\,{m}^\varepsilon_0+\alpha\int_\Omega f\,v^\varepsilon_+\chi_{_{\Omega_c}}.
$$
Multiplying this equation by $\displaystyle\int_\Omega v^\varepsilon$ and using the H\"older and Young inequalities we obtain
\begin{eqnarray}
\frac12\frac{d}{dt}\left(\int_\Omega v^\varepsilon\right)^2+\left(\int_\Omega v^\varepsilon\right)^2&=&\alpha\, {m}^\varepsilon_0\left(\int_\Omega v^\varepsilon\right)
+\alpha\left(\int_\Omega f\,v^\varepsilon_+\chi_{_{\Omega_c}}\right)\left(\int_\Omega v^\varepsilon\right)\nonumber\\
&\le&\frac12\left(\int_\Omega v^\varepsilon\right)^2+\alpha^2({m}^\varepsilon_0)^2C+\alpha^2C\|f\|^2\|v^\varepsilon\|^2.\label{F-5}
\end{eqnarray}
Adding (\ref{F-5}) to (\ref{F-2}), then replacing (\ref{F-3}) and (\ref{F-4}) in the resulting inequality, and taking into account that $\alpha\le1$, we obtain
\begin{eqnarray}
&&\frac{d}{dt}\left(\int_\Omega(u^\varepsilon+1)\ln(u^\varepsilon+1)+\frac12\|v^\varepsilon\|^2_{H^1}+\frac\varepsilon2\|\Delta v^\varepsilon\|^2\right)
+2\|\nabla\sqrt{u^\varepsilon+1}\|^2+C\|v^\varepsilon\|^2_{H^2}+\varepsilon\|\nabla(\Delta v^\varepsilon)\|^2\nonumber\\
&&\hspace{0.2cm}\le C(({m}_0^\varepsilon)^2+\|f\|^2_{L^4}\|v^\varepsilon\|^2_{H^1}).\label{F-6}
\end{eqnarray}
From (\ref{F-6}) and Gronwall lemma we deduce that
\begin{eqnarray}
\|v^\varepsilon\|^2_{L^\infty(0,T;H^2(\Omega))}&\le& \frac1\varepsilon\exp(\mathcal{A}(T))\left(\|u_0^\varepsilon\|^2+\|v_0^\varepsilon\|^2_{H^2}+C({m}^\varepsilon_0)^2T\right)\nonumber\\
&:=& K_0^\varepsilon\left({m}_0^\varepsilon,T,\|u_0^\varepsilon\|,\|v_0^\varepsilon\|_{H^2},\mathcal{A}(T)\right),\label{F-7}
\end{eqnarray}
where 
\begin{equation*}\label{F-8}
\mathcal{A}(T):=C\int_0^T\|f(s)\|^2_{L^4}ds=C\|f\|^2_{L^2(L^4)}.
\end{equation*}
Now, integrating (\ref{F-6}) in (0,T) and using (\ref{F-7}) we obtain
\begin{eqnarray}
\int_0^T\|v^\varepsilon(s)\|^2_{H^3}ds&\le&\frac1\varepsilon C\left(\|u^\varepsilon_0\|^2+\|v_0^\varepsilon\|^2_{H^2}+({m}_0^\varepsilon)^2T+(\sup_{0\le s\le T}\|v^\varepsilon(s)\|^2_{H^2})\mathcal{A}(T)\right)\nonumber\\
&:=& K_1^\varepsilon({m}_0^\varepsilon,T,\|u_0^\varepsilon\|,\|v_0^\varepsilon\|_{H^2},\mathcal{A}(T)).\label{F-9}
\end{eqnarray}
Therefore, from (\ref{F-7}) and (\ref{F-9}) we conclude that $v^\varepsilon$ is bounded in $L^\infty(0,T;H^2(\Omega))\cap L^2(0,T;H^3(\Omega))$ (independently of $\alpha\in(0,1]$), which implies that
$z^\varepsilon$ is bounded in $\mathcal{X}.$

\vspace{0.2cm}
\underline{Step 3:} $u^\varepsilon$ is bounded in $\mathcal{X}$.
\vspace{0.2cm}

\noindent Testing (\ref{F-1})$_1$ by $u^\varepsilon$ we have
\begin{equation}\label{F-10}
\frac12\frac{d}{dt}\|u^\varepsilon\|^2+\|\nabla u^\varepsilon\|^2=-\alpha(u^\varepsilon\nabla v^\varepsilon,\nabla u^\varepsilon).
\end{equation}
Applying  H\"older and Young inequalities,  and using (\ref{interpol}), we obtain 
\begin{eqnarray}
-\alpha(u^\varepsilon\nabla v^\varepsilon,\nabla u^\varepsilon)&\le&\alpha\|u^\varepsilon\|_{L^4}\|\nabla v^\varepsilon\|_{L^4}\|\nabla u^\varepsilon\|
\le  C\|u^\varepsilon\|^{1/4}\|\nabla v^\varepsilon\|_{L^4}\|u^\varepsilon\|^{7/4}_{H^1}\nonumber\\
&\le&\frac12\|u^\varepsilon\|^2_{H^1}+C\|\nabla v^\varepsilon\|^8_{L^4}\|u^\varepsilon\|^2. \label{F-11}
\end{eqnarray}
Replacing  (\ref{F-11}) in (\ref{F-10}), and taking into account that $({m}_0^\varepsilon)^2=\left(\displaystyle\int_\Omega u^\varepsilon(t)\right)^2$, we have 
\begin{equation}\label{F-12}
\frac{d}{dt}\|u^\varepsilon\|^2+\|u^\varepsilon\|^2_{H^1}\le C\|\nabla v^\varepsilon\|^8_{L^4}\|u^\varepsilon\|^2+2({m}_0^\varepsilon)^2.
\end{equation}
In particular, using (\ref{interpol}), (\ref{F-7}), we obtain
$$
\|\nabla v^\varepsilon\|^8_{L^4}\le C(K_0^\varepsilon)^4.
$$
Then, we can apply the Gronwall lemma in (\ref{F-12}), obtaining 
\begin{equation}\label{F-13}
\|u^\varepsilon\|^2_{L^\infty(0,T;L^2(\Omega))}\le \exp(C(K_0^\varepsilon)^4)(\|u_0^\varepsilon\|^2+2({m}_0^\varepsilon)^2T):=K_2^\varepsilon({m}_0^\varepsilon,T,\|u_0^\varepsilon\|,\|v_0^\varepsilon\|_{H^2},\mathcal{A}(T)).
\end{equation}
Integrating (\ref{F-12}) in $(0,T)$ we have 
\begin{eqnarray}
\int_0^T\|u^\varepsilon(s)\|^2_{H^1}ds&\le& \|u_0^\varepsilon\|^2+2({m}_0^\varepsilon)^2T+C(K_0^\varepsilon)^4\int_0^T\|u^\varepsilon(s)\|^2ds\nonumber\\
&\le& \|u_0^\varepsilon\|^2+2({m}^\varepsilon_0)^2T+C(K_0^\varepsilon)^4K_2^\varepsilon T\nonumber\\
&:=&K^\varepsilon_3({m}_0^\varepsilon,T,\|u^\varepsilon_0\|,\|v^\varepsilon_0\|_{H^2},\mathcal{A}(T)).\label{F-14}
\end{eqnarray}
Thus, from (\ref{F-13}) and (\ref{F-14}) we deduce that $u^\varepsilon$ is bounded in $\mathcal{X}$. Consequently, the fixed points of $\alpha R$ are bounded 
in $\mathcal{X}\times\mathcal{X}$, independently of $\alpha>0$. For $\alpha=0$ the result
is trivial.
\end{proof}

\begin{lemma}\label{conti}
The operator  $R:\mathcal{X}\times\mathcal{X}\rightarrow\mathcal{X}\times\mathcal{X}$,
defined in (\ref{regg-2}), is continuous.
\end{lemma}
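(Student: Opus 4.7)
The plan is to establish sequential continuity. Given a sequence $(\overline{u}_n^\varepsilon,\overline{z}_n^\varepsilon)$ converging strongly to $(\overline{u}^\varepsilon,\overline{z}^\varepsilon)$ in $\mathcal{X}\times\mathcal{X}$, let $(u_n^\varepsilon,z_n^\varepsilon):=R(\overline{u}_n^\varepsilon,\overline{z}_n^\varepsilon)$ and $(u^\varepsilon,z^\varepsilon):=R(\overline{u}^\varepsilon,\overline{z}^\varepsilon)$. I will show that $(u_n^\varepsilon,z_n^\varepsilon)\to(u^\varepsilon,z^\varepsilon)$ in $\mathcal{X}\times\mathcal{X}$. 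The a priori estimates (\ref{com-1})--(\ref{com-2}) from the proof of Lemma \ref{compact} depend continuously on $\|\overline{u}^\varepsilon\|_{\mathcal{X}}$ and $\|\overline{z}^\varepsilon\|_{\mathcal{X}}$, so applied to each $n$ they yield a uniform bound of $\{(u_n^\varepsilon,z_n^\varepsilon)\}$ in $X_{5/3}\times X_{10/3}$. By the compact embedding $X_{5/3}\times X_{10/3}\hookrightarrow\mathcal{X}\times\mathcal{X}$ established within the proof of Lemma \ref{compact}, along some subsequence one obtains strong convergence to a limit $(\tilde{u}^\varepsilon,\tilde{z}^\varepsilon)\in\mathcal{X}\times\mathcal{X}$, together with weak convergence in $X_{5/3}\times X_{10/3}$.

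The core step is to pass to the limit in the decoupled linear system (\ref{regg-2}) written for the data $(\overline{u}_n^\varepsilon,\overline{z}_n^\varepsilon)$ and identify $(\tilde{u}^\varepsilon,\tilde{z}^\varepsilon)$ with $(u^\varepsilon,z^\varepsilon)$. The linear occurrences of $\overline{u}_n^\varepsilon$ and $\overline{z}_n^\varepsilon$ on the right-hand sides pass to the limit directly thanks to the strong convergence in $\mathcal{X}$. For the terms containing $\overline{v}_n^\varepsilon:=v(\overline{z}_n^\varepsilon)$, I will exploit that $\overline{v}_n^\varepsilon$ depends linearly on $\overline{z}_n^\varepsilon$ through the elliptic Neumann problem (\ref{regg-1-1}); standard elliptic regularity applied pointwise in time gives
\begin{equation*}
\|\overline{v}_n^\varepsilon-\overline{v}^\varepsilon\|_{L^\infty(H^2)\cap L^2(H^3)}\le C_\varepsilon\|\overline{z}_n^\varepsilon-\overline{z}^\varepsilon\|_{\mathcal{X}}\longrightarrow 0,
\end{equation*}
so, via the embedding $L^\infty(H^1)\cap L^2(H^2)\hookrightarrow L^{10}(Q)$, one has $\nabla\overline{v}_n^\varepsilon\to\nabla\overline{v}^\varepsilon$ strongly in $L^{10}(Q)$ and $(\overline{v}_n^\varepsilon)_+\to\overline{v}_+^\varepsilon$ strongly in $L^\infty(Q)$ (by Lipschitz continuity of the positive part).

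Combining the above with the strong $\mathcal{X}\hookrightarrow L^{10/3}(Q)$ convergence $\overline{u}_{n+}^\varepsilon\to\overline{u}_+^\varepsilon$ yields
\begin{equation*}
\overline{u}_{n+}^\varepsilon\nabla\overline{v}_n^\varepsilon\to\overline{u}_+^\varepsilon\nabla\overline{v}^\varepsilon\ \ \mbox{strongly in }L^{5/3}(Q),\quad f\,(\overline{v}_n^\varepsilon)_+\chi_{_{\Omega_c}}\to f\,\overline{v}_+^\varepsilon\chi_{_{\Omega_c}}\ \ \mbox{strongly in }L^4(Q),
\end{equation*}
which, together with the trivial convergence of the (fixed) initial data, suffices to pass to the limit in the weak formulation of (\ref{regg-2}). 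Thus $(\tilde{u}^\varepsilon,\tilde{z}^\varepsilon)$ solves (\ref{regg-2}) with data $(\overline{u}^\varepsilon,\overline{z}^\varepsilon)$, and since that linear problem admits a unique solution by Lemma \ref{feireisl}, we conclude $(\tilde{u}^\varepsilon,\tilde{z}^\varepsilon)=(u^\varepsilon,z^\varepsilon)$. The uniqueness of the limit promotes the subsequence convergence to convergence of the full sequence by a standard Urysohn argument.

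The main obstacle I expect is the identification of the limit of the chemotactic-type forcing $\nabla\cdot(\overline{u}_{n+}^\varepsilon\nabla\overline{v}_n^\varepsilon)$, whose resolution crucially relies on the $H^3$-in-space regularity of $\overline{v}_n^\varepsilon$ provided by the elliptic regularization (\ref{regg-1-1}) --- a gain of derivative that is unavailable for the original $v$-equation and is precisely what justifies introducing the regularization in the first place.
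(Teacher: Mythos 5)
Your proposal is correct and follows essentially the same route as the paper: uniform bounds from (\ref{com-1})--(\ref{com-2}), compactness of $X_{5/3}\times X_{10/3}\hookrightarrow\mathcal{X}\times\mathcal{X}$, passage to the limit in the linear system (\ref{regg-2}), identification of the limit with $R(\overline{u}^\varepsilon,\overline{z}^\varepsilon)$ by uniqueness, and upgrading to the whole sequence. The only difference is that you spell out the limit passage in the forcing terms (via the $\varepsilon$-elliptic regularity of (\ref{regg-1-1}) and Lipschitz continuity of the positive part), details the paper leaves implicit.
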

\begin{proof}
Let $\{(\overline{u}^\varepsilon_m,\overline{z}^\varepsilon_m)\}_{m\in\mathbb{N}}\subset \mathcal{X}\times\mathcal{X}$ be a sequence such that
\begin{equation}\label{cont-1}
(\overline{u}^\varepsilon_m,\overline{z}^\varepsilon_m)\rightarrow(\overline{u}^\varepsilon,\overline{z}^\varepsilon)\mbox{ in }\mathcal{X}\times\mathcal{X}.
\end{equation}
In particular, $\{(\overline{u}^\varepsilon_m,\overline{z}^\varepsilon_m)\}_{m\in\mathbb{N}}$ is bounded in $\mathcal{X}\times\mathcal{X}$, thus, from (\ref{com-1}) and (\ref{com-2})
we deduce that sequence $\{(u^\varepsilon_m,z^\varepsilon_m):=R(\overline{u}^\varepsilon_m,\overline{z}^\varepsilon_m)\}_{m\in\mathbb{N}}$ is bounded in $X_{5/3}\times X_{10/3}$. Then, there exists 
a subsequence of $\{R(\overline{u}^\varepsilon_m,\overline{z}^\varepsilon_m)\}_{m\in\mathbb{N}}$, still denoted by $\{R(\overline{u}^\varepsilon_m,\overline{z}^\varepsilon_m)\}_{m\in\mathbb{N}}$, and an
element $(\widehat{u}^\varepsilon,\widehat{z}^\varepsilon)\in X_{5/3}\times X_{10/3}$ such that
\begin{equation}\label{cont-2}
R(\overline{u}^\varepsilon_m,\overline{z}^\varepsilon_m)\rightarrow (\widehat{u}^\varepsilon,\widehat{z}^\varepsilon)\mbox{ weakly in }X_{5/3}\times X_{10/3}
\mbox{ and strongly in }\mathcal{X}\times\mathcal{X}.
\end{equation}
Now, we consider system (\ref{regg-2}) written for $(u^\varepsilon,z^\varepsilon)=R(\overline{u}^\varepsilon_m,\overline{z}^\varepsilon_m)$ and
$(\overline{u}^\varepsilon,\overline{z}^\varepsilon)=(\overline{u}^\varepsilon_m,\overline{z}^\varepsilon_m)$. From (\ref{cont-1}) and (\ref{cont-2}), taking the limit in the system
depending on $m$, as $m$ goes to $+\infty$, we deduce that $(\widehat{u}^\varepsilon,\widehat{z}^\varepsilon)=R(\lim_{m\rightarrow+\infty}(\overline{u}^\varepsilon_m,\overline{z}^\varepsilon_m))$.
Then, by uniqueness of limit the whole sequence 
$\{R(\overline{u}^\varepsilon_m,\overline{z}^\varepsilon_m)\}_{m\in\mathbb{N}}$ converges to $(\widehat{u}^\varepsilon,\widehat{z}^\varepsilon)$ strongly in $\mathcal{X}\times\mathcal{X}$.
Thus, operator $R:\mathcal{X}\times\mathcal{X}\rightarrow\mathcal{X}\times\mathcal{X}$ is continuous. 
\end{proof}

Consequently, from Lemmas \ref{compact}, \ref{fix} and \ref{conti}, it follows that the operator $R$ satisfy the hypotheses of the Leray-Schauder fixed point theorem. Thus, we conclude that the
map $R$ has a fixed point  $(u^\varepsilon,z^\varepsilon)$, that is $R(u^\varepsilon,z^\varepsilon)=(u^\varepsilon,z^\varepsilon)$, which is a solution of 
system (\ref{regg-1}).

\subsection*{Acknowledgments}

F. Guill\'en-Gonz\'alez and M.A. Rodr\'iguez-Bellido have been supported by MINECO grant MTM2015-69875-P (Ministerio de Econom\'ia y Competitividad, Spain) with the participation of
FEDER. E. Mallea-Zepeda has been supported by Proyecto UTA-Mayor 4740-18 (Universidad de Tarapac\'a, Chile). Also, E. Mallea-Zepeda expresses his gratitude to Instituto de Matem\'aticas 
Universidad de Sevilla and Dpto. de Ecuaciones Diferenciales y An\'alisis Num\'erico of Universidad de Sevilla for their hospitality during his research stay in both centers.

\end{document}